\let\save@mathaccent\mathaccent
\newcommand*\if@single[3]{%
  \setbox0\hbox{${\mathaccent"0362{#1}}^H$}%
  \setbox2\hbox{${\mathaccent"0362{\kern0pt#1}}^H$}%
  \ifdim\ht0=\ht2 #3\else #2\fi
  }
\newcommand*\rel@kern[1]{\kern#1\dimexpr\macc@kerna}
\newcommand*\widebar[1]{\@ifnextchar^{{\wide@bar{#1}{0}}}{\wide@bar{#1}{1}}}
\newcommand*\wide@bar[2]{\if@single{#1}{\wide@bar@{#1}{#2}{1}}{\wide@bar@{#1}{#2}{2}}}
\newcommand*\wide@bar@[3]{%
  \begingroup
  \def\mathaccent##1##2{%
    \let\mathaccent\save@mathaccent
    \if#32 \let\macc@nucleus\first@char \fi
    \setbox\z@\hbox{$\macc@style{\macc@nucleus}_{}$}%
    \setbox\tw@\hbox{$\macc@style{\macc@nucleus}{}_{}$}%
    \dimen@\wd\tw@
    \advance\dimen@-\wd\z@
    \divide\dimen@ 3
    \@tempdima\wd\tw@
    \advance\@tempdima-\scriptspace
    \divide\@tempdima 10
    \advance\dimen@-\@tempdima
    \ifdim\dimen@>\z@ \dimen@0pt\fi
    \rel@kern{0.6}\kern-\dimen@
    \if#31
      \overline{\rel@kern{-0.6}\kern\dimen@\macc@nucleus\rel@kern{0.4}\kern\dimen@}%
      \advance\dimen@0.4\dimexpr\macc@kerna
      \let\final@kern#2%
      \ifdim\dimen@<\z@ \let\final@kern1\fi
      \if\final@kern1 \kern-\dimen@\fi
    \else
      \overline{\rel@kern{-0.6}\kern\dimen@#1}%
    \fi
  }%
  \macc@depth\@ne
  \let\math@bgroup\@empty \let\math@egroup\macc@set@skewchar
  \mathsurround\z@ \frozen@everymath{\mathgroup\macc@group\relax}%
  \macc@set@skewchar\relax
  \let\mathaccentV\macc@nested@a
  \if#31
    \macc@nested@a\relax111{#1}%
  \else
    \def\gobble@till@marker##1\endmarker{}%
    \futurelet\first@char\gobble@till@marker#1\endmarker
    \ifcat\noexpand\first@char A\else
      \def\first@char{}%
    \fi
    \macc@nested@a\relax111{\first@char}%
  \fi
  \endgroup
}
\definecolor{forestgreen}{rgb}{0.0, 0.27, 0.13}
\definecolor{darkslateblue}{rgb}{0.28, 0.24, 0.55}
\definecolor{blue1}{HTML}{0071A4}
\definecolor{green1}{HTML}{007D1B}
\numberwithin{equation}{section}
\theoremstyle{definition}
\newtheorem{definition}{Definition}[section]
\theoremstyle{plain}
\newtheorem{assumption}{Assumption}
\newtheorem{lemma}[definition]{Lemma}
\newtheorem{theorem}[definition]{Theorem}
\newtheorem*{theorem*}{Theorem}
\newenvironment{manualtheorem}[1]{%
  \manualtheoreminner
}{\endmanualtheoreminner}
\newtheorem{proposition}[definition]{Proposition}
\theoremstyle{plain}
\newtheorem{corollary}[definition]{Corollary}
\theoremstyle{definition}
\newtheorem{remark}[definition]{Remark}
\theoremstyle{definition}
\newtheorem{example}[definition]{Example}
\theoremstyle{plain}
\theoremstyle{definition}
\newcommand{\diam}{\operatorname{diam}}
\newcommand{\vdim}{\operatorname{vdim}}
\newcommand{\Leb}{\operatorname{Leb}}
\newcommand{\Card}{\operatorname{Card}}
\newcommand{\eps}{\varepsilon}
\newcommand{\quand}{\quad\text{ and } \quad}
\newcommand{\cY}{{\mathcal Y}}
\newcommand{\cV}{{\mathcal V}}
\newcommand{\cA}{{\mathcal A}}
\newcommand{\cB}{{\mathcal B}}
\newcommand{\cC}{{\mathcal C}}
\newcommand{\cM}{{\mathcal M}}
\newcommand{\cL}{{\mathcal L}}
\newcommand{\cP}{{\mathcal P}}
\newcommand{\cQ}{{\mathcal Q}}
\newcommand{\cS}{{\mathcal S}}
\newcommand{\cT}{{\mathcal T}}
\newcommand{\sG}{\mathscr{G}}
\newcommand{\R}{{\mathbb R}}
\newcommand{\N}{{\mathbb N}}
\newcommand{\PP}{{\mathbb P}}
\newcommand{\eqdef}{\coloneqq}
\newcommand{\bN}{\N}
\newcommand{\bR}{\R}
\newcommand{\vrt}{{\widebar{\tau}}}
\newcommand{\p}{{\widebar{p}}}
\newcommand{\intr}{\operatorname{int}}
\numberwithin{equation}{section}
\begin{document}

\title[Generic points for non-statistical dynamical systems]{Hausdorff dimension of sets of generic points for non-statistical dynamical systems}

\author[D. Coates]{Douglas Coates}
\address{Institute of Mathematics, Federal University of Rio de Janeiro, Av. Athos da Silveira Ramos 149, Cidade Universit\'aria - Ilha do Fund\~ao, Rio de Janeiro 21945-909, RJ, Brazil}
\email{\href{mailto:coates@im.ufrj.br}{coates@im.ufrj.br}}

\author[K. Gelfert]{Katrin Gelfert}
\address{Institute of Mathematics, Federal University of Rio de Janeiro, Av. Athos da Silveira Ramos 149, Cidade Universit\'aria - Ilha do Fund\~ao, Rio de Janeiro 21945-909, RJ, Brazil}
\email{\href{mailto:gelfert@im.ufrj.br}{gelfert@im.ufrj.br}}
\thanks{K.~Gelfert have been supported [in part] by 
CAPES -- Finance Code 001 and  CNPq grant 305327/2022-4  
This study was funded by FAPERJ – Carlos Chagas Filho Foundation for Research Support of the State of Rio de Janeiro, Processes SEI E-26/200.371/2023,  
    E-16/2014 INCT/FAPERJ, and 
    E-26/200.027/2025 and 200.028/2025, 
and by the FAPESP Carlos Chagas Filho Foundation for Research Support of the State of São Paulo Research Foundation grant 2022/16259-2.
The authors thank their home institutions for their hospitality while preparing this paper.
The authors thank Gabriela Estevez for pointing out the elegant inequality in~\eqref{eq:abcd-clever} which made many of our arguments less tedious. The authors would also like to also thank Daniel Smania for stimulating conversations at the onset of this project and for showing us~\cite{MorSma2014} which inspired some of the ideas in Section~\ref{sec:approx}. A big thank you also to Fabio Tal for giving the key idea which lead to the removal of the assumption that $\cC$ is convex in Theorem~\ref{thm:dim-H-C}.
}

\subjclass[2020]{%
}

\begin{abstract}
  We consider one dimensional maps with several neutral fixed points that do not admit any physical measures. We show that there is simplex of measures so that every measure in this simplex has a basin which has full Hausdorff dimension.
\end{abstract}

\maketitle
\section{Introduction}

We study generic points for non-ergodic measures for certain non-statistical maps of the interval. Our results apply to large class of interval maps admitting $d \ge 2$ equally sticky neutral fixed points. We first present our results in the context of a simple example. The full description will be given in Section \ref{sec:setup}.

Following \cite{SerYan:2019}, consider the transformation $f \colon [0,1]\to [0,1]$ given by
\begin{equation}\label{eq:def-f}
  f(x) \eqdef
  \begin{cases}
  x + c_1 x^3,& x \in [0,1/3], \\
  x + c_2 ( x - \frac{1}{2})^3,& x \in (1/3, 2/3), \\
  x + c_3 ( x -  1 )^3,& x \in [2/3, 1],
  \end{cases}
\end{equation}
where $c_1 = c_3 = 18$ and $c_2 = 72$, so that $f$ is full-branched (see Figure~\ref{fig:map_example}). The map \eqref{eq:def-f} has derivative $f'(\cdot) > 1$, except at the neutral fixed points $\xi_1 =0$, $\xi_2 = 1/2$, and $\xi_3 =1$. This map admits a unique\footnote{Unique up-to scaling by a positive constant.} $\sigma$-finite absolutely continuous ergodic invariant measure $\mu$ which gives infinite mass to the unit interval.

\begin{figure}[h] 
 \begin{overpic}[scale=.27]{./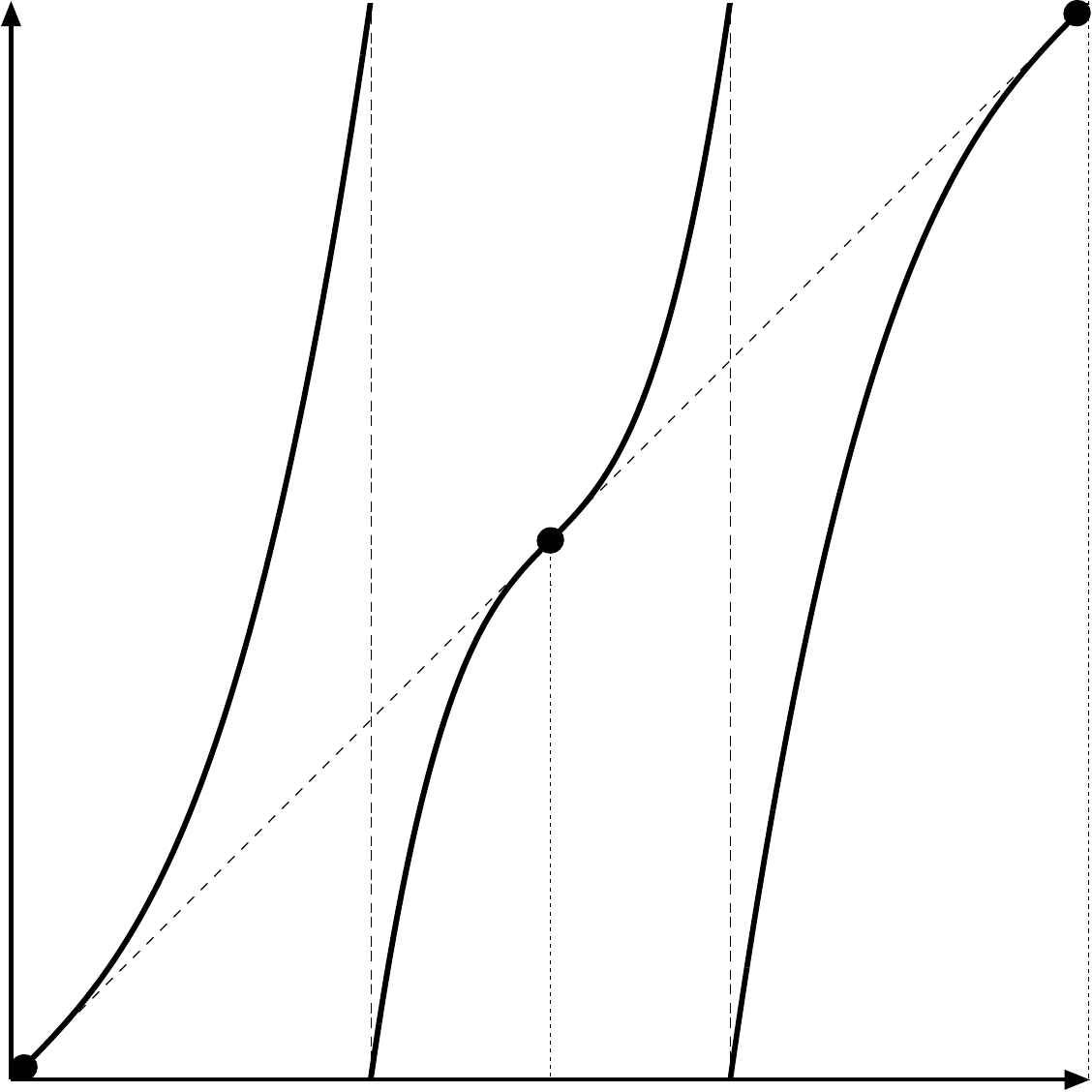}
 	\put(-2,-5){\small{$\xi_1$}}
 	\put(32,-6){\small{$\frac13$}}
 	\put(65,-6){\small{$\frac23$}}
 	\put(48,-5){\small{$\xi_2$}}
 	\put(97,-5){\small{$\xi_3$}}
 	\put(-5,0){\small{$0$}}
 	\put(-5,97){\small{$1$}}
\end{overpic}
\caption{The map $f$ given in~\eqref{eq:def-f}.} 
 \label{fig:map_example}
\end{figure}

For $x \in X$, let $\delta_x$ denote the Dirac delta measure at $x$. By~\cite{CoaMelTal:2024}, the map $f$ given by \eqref{eq:def-f} is \emph{non-statistical} in the sense that for Lebesgue almost every $x$, the \emph{empirical measures}
\[
  e_n (x) \coloneqq \frac{1}{n} \sum_{ k = 0 }^{ n - 1 } \delta_{ f^k x }
\]
do not converge  in the space of Borel probability measures in the weak$\ast$ topology, as $n\to\infty$. Even more, for Lebesgue-almost every point $x$, these measures distribute along the simplex $\pi \cS$ of invariant measures supported at the fixed points, where
\[
  \cS \eqdef \{ \p \in [0,1]^3 \colon p_1 + p_2 + p_2 = 1\},
  \quand
  \pi \colon \cS \to \cM (f), \quad
  \pi ( \p ) = \nu_{\p} \coloneqq \sum_{ j = 1 }^{3} p_i \delta_{\xi_{i}}
  ,
\]
and where $\cM(f)$ denotes  the space of $f$-invariant Borel probability measures.
Indeed, letting
\begin{equation}\label{eqVx}
	\cV (x) \coloneqq \big\{ \text{limit points of } (e_n (x))_{n\in\bN} \big\},
\end{equation}
by \cite[Theorem A]{CoaMelTal:2024}, one has
\begin{equation}\label{shown}
  \cV(x)
  = \pi\cS\quad\text{ for Lebesgue almost every }x.
\end{equation}

Given an $f$-invariant probability measure $\nu$, consider its set of \emph{generic points} or \emph{basin of attraction},
\begin{equation}\label{eqGnu}
  \sG ( \nu ) \eqdef \{ x \colon e_{ n } (x) \to \nu \text{ in the weak$\ast$ topology as }n\to\infty\}. 
\end{equation}
By~\cite{CoaMelTal:2024}, the basin of attraction of any measure in $\pi\cS$  is negligible in terms of the Lebesgue measure
\begin{equation}\label{eq:leb-0}
  \Leb   ( \sG ( \nu ) ) = 0, \quad \text{ for all } \nu \in \pi\cS
  .
\end{equation}
Moreover, by~\cite[Theorem 4.1]{PfiSul:07} (see also \eqref{gen-K} below), it is also negligible in terms of the topological entropy%
\footnote{Here we use the concept of topological entropy $h_{ \rm top }   (f,\cdot)$ on general (not necessarily compact) subsets following \cite{Bow:73}.} of $f$
\begin{equation}\label{eq:top-0}
  h_{ \rm top }   (f, \sG ( \nu ) ) = 0, \quad \text{ for all } \nu \in \pi\cS.
\end{equation}
Our main result, applied to the map \eqref{eq:def-f}, tells us that despite~\eqref{eq:leb-0} and~\eqref{eq:top-0} these basins of attraction are in fact maximal in terms of Hausdorff dimension.
 
\begin{theorem}\label{thm:theorem1}
For every $\p \in \cS$,
$
  \dim_{\rm H } \sG ( \nu_\p )   = 1.
$
\end{theorem}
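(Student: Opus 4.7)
The plan is to build, for each $\p \in \cS$, a Cantor-like set $K_\p \subset \sG(\nu_\p)$ of Hausdorff dimension $1$. Since $K_\p \subset [0,1]$ the upper bound is immediate, so the content is to show $\dim_{\rm H} K_\p \ge 1 - \eta$ for every $\eta > 0$. The construction interleaves long \emph{sticky} excursions near the neutral fixed points $\xi_1, \xi_2, \xi_3$, whose durations are tuned to produce the correct weights $p_1, p_2, p_3$ in the limit, with shorter \emph{free} excursions that traverse a uniformly hyperbolic subsystem of near-maximal dimension. The reason dimension $1$ is achievable, despite $\nu_\p$ being purely atomic, is that $f'(\xi_i) = 1$: over a sticky excursion of length $N$ near $\xi_i$, the total contraction of the relevant inverse branch is only polynomial, of order $N^{-1/2}$, coming from the normal form $f(x) - x \asymp c_i(x-\xi_i)^3$. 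Thus sticky periods cost nothing on the logarithmic scale compared to the exponential branching generated by the free periods.

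Concretely, fix $\eta > 0$ and first select an $f$-invariant uniformly hyperbolic Cantor set $\Lambda_\eta$, contained in the complement of small neighborhoods of the $\xi_i$, with $\dim_{\rm H} \Lambda_\eta \ge 1 - \eta$; such $\Lambda_\eta$ exist by a Katok-type approximation applied to the induced map of $f$ on such complements, which is uniformly expanding and full Markov. Then choose integer sequences $(M_n)$ and $(N_n^{(i)})_{i=1,2,3}$ satisfying: (i) $N_n^{(i)} / \sum_{j} N_n^{(j)} \to p_i$ for each $i$, producing the right weights; (ii) $\sum_{k=1}^n M_k = o\bigl(\sum_{k=1}^n \sum_{j} N_k^{(j)}\bigr)$, so the atomic part dominates in time; and (iii) $\sum_{k=1}^n \log N_k^{(i)} = o\bigl(\sum_{k=1}^n M_k\bigr)$, so branching dominates contraction. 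Build admissible itineraries by concatenating at each stage $n$ an $M_n$-length block coding a point of $\Lambda_\eta$ drawn from a measure of near-maximal dimension, followed by three sticky blocks of lengths $N_n^{(i)}$ shadowing $\xi_i$; these sticky blocks correspond symbolically to cylinders of length of order $(N_n^{(i)})^{-1/2}$.

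The inclusion $K_\p \subset \sG(\nu_\p)$ follows from a direct computation of empirical measures: sticky blocks of length $N_n^{(i)}$ contribute mass $N_n^{(i)}/T_n$ concentrated arbitrarily near $\xi_i$, and conditions (i)--(ii) force these ratios to converge to $p_i$. For the dimension lower bound, apply the mass distribution principle with uniform mass on surviving level-$n$ cylinders: the number of branches at stage $n$ is at least $\exp\bigl((1-\eta) \sum_{k \le n} M_k \log 3\bigr)$, while each cylinder has diameter at least $\exp\bigl(-(1+o(1)) \sum_{k \le n} M_k \log 3\bigr)$ by (iii). This yields $\dim_{\rm H} K_\p \ge 1 - \eta$; letting $\eta \downarrow 0$ along a countable family of such constructions gives $\dim_{\rm H} \sG(\nu_\p) \ge 1$. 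The main technical difficulty will be the precise quantitative geometry of sticky excursions, both the $(N_n^{(i)})^{-1/2}$ cylinder-length estimate and the bounded-distortion properties needed to splice successive blocks together, which should follow from a careful analysis of the cubic neutral-fixed-point normal forms but requires some care to make rigorous.
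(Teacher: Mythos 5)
Your overall strategy---splicing uniformly hyperbolic ``free'' blocks of dimension close to $1$ with sticky blocks near the $\xi_i$ whose contraction is only polynomial in their length, and then applying the mass distribution principle---is sound and is in fact close in spirit to the paper's bridging construction (the paper works with the induced Gibbs--Markov map $F$ and prescribes the return-time ratio $\vrt_k/\tau_k$ on cylinders, obtaining dimension near $1$ from a distributional limit theorem giving positive Lebesgue measure to cylinders with ratio near $\p$, whereas you impose the ratio by deterministic block lengths; that is a legitimate alternative). However, there is a genuine gap in your inclusion $K_\p \subset \sG(\nu_\p)$: genericity requires $e_m(x)\to\nu_\p$ along \emph{every} $m\to\infty$, not only at the ends of your stages, and your conditions (i)--(iii) do not rule out a single sticky block being comparable to the whole elapsed time. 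Concretely, take $M_n=2^n$ and $N_n^{(i)}=p_i4^n$ with $\p=(1/3,1/3,1/3)$: (i)--(iii) all hold, but at the time $m=T_{n-1}+M_n+N_n^{(1)}$ (just after the first sticky block of stage $n$, with $T_{n-1}\approx 4^n/3$) the empirical measure gives $\xi_1$ weight about $p_1\cdot\frac{4/3}{1/3+p_1}=2/3\neq 1/3$, so $\cV(x)$ contains measures other than $\nu_\p$ and $x\notin\sG(\nu_\p)$. You must add a taming condition such as $M_n+\sum_j N_n^{(j)}=o(T_{n-1})$ (each block negligible relative to elapsed time), which is compatible with (i)--(iii); this is exactly the role played in the paper by conditions \eqref{eq:k_i-2}--\eqref{eq:k_i-3} together with the intermediate-time analysis in Theorem~\ref{thm:coding} and Lemma~\ref{lem:Vx-np}.

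Two secondary points. First, your branch/diameter count with $\log 3$ is not correct as stated, since the derivative of $f$ is far from constant on the hyperbolic part; the right bookkeeping uses the entropy and Lyapunov exponent of a measure of dimension $\ge 1-\eta$ on $\Lambda_\eta$ (equivalently a geometric/Gibbs measure for a truncated induced repeller, as in Lemma~\ref{existence-m} and Lemma~\ref{lem:dim-An}), and one must control balls at all scales, including inside free blocks and across the free/sticky junctions---this is where the paper's conditions \eqref{eq:k-4-new}--\eqref{eq:k-7-new} do real work. Second, the existence of $\Lambda_\eta$ should not be attributed to Katok theory (the acip here is $\sigma$-finite and infinite); it follows from truncating the induced full-branch uniformly expanding map to finitely many branches and using bounded distortion, which is how the paper proceeds. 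Your $N^{-1/2}$ versus exact-escape-time $N^{-3/2}$ cylinder-length estimate is a harmless imprecision, since only its logarithm enters.
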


Note that the set $\cV(x)$ defined in \eqref{eqVx} is a nonempty
closed connected subset of $\cM(f)$. Our second result tells us that for \emph{any} closed connected 
subset of $\pi\cS$ the conclusion of the above theorem remains true.

\begin{theorem}\label{thm:theorem2}
For every closed connected subset $\cC \subset \cS$,
$
  \dim_{\rm H } \{ x \colon \cV(x) = \pi\cC \} = 1.
$
\end{theorem}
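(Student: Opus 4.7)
The strategy is to construct, for each $\eta \in (0,1)$, a Cantor-like subset $K_\eta \subseteq \{x : \cV(x) = \pi\cC\}$ with $\dim_{\rm H} K_\eta \geq 1 - \eta$; letting $\eta \to 0^+$ then yields the theorem. Note that the case $\cC = \{\p\}$ recovers Theorem~\ref{thm:theorem1}, so the construction should generalize whatever Cantor set is used for that theorem. A preliminary ingredient is a ``slow walk'' in $\cC$: because $\cC$ is a compact connected subset of the simplex $\cS$, one can extract a sequence $(\p_k)_{k\geq 1} \subset \cC$ that is dense in $\cC$ and satisfies $\|\p_{k+1}-\p_k\|\to 0$, built by inductively linking $2^{-k}$-nets via $2^{-k}$-chains that are available from connectedness. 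Via the affine bijection $\pi$ this produces $(\nu_{\p_k})$ dense in $\pi\cC$ with vanishing successive weak$\ast$-distances.

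Next, fix a rapidly growing sequence $(N_k)$ with $N_{k+1}/N_k \to \infty$ and tolerances $(\eta_k)\downarrow 0$. In each window $W_k \eqdef [N_k,N_{k+1})$ I will prescribe an orbit template with two parts: a \emph{structured} part of relative length $1-\eta_k$, divided into many short cycles that each have the orbit spend durations in the proportions $(p_{k,1},p_{k,2},p_{k,3})$ close to $\xi_1,\xi_2,\xi_3$; and a \emph{free} part of relative length $\eta_k$, on which the symbolic itinerary in the full shift $\{1,2,3\}^{\eta_k|W_k|}$ (available from the full-branched structure of $f$) is left unconstrained. The structured template ensures that the partial average $\bar e_{[N_k,n]}(x) \eqdef \frac{1}{n-N_k}\sum_{j=N_k}^{n-1}\delta_{f^j x}$ is $O(\eta_k)$-close to $\nu_{\p_k}$ as soon as $n-N_k$ exceeds the cycle length, so that the identity
\[
  e_n(x) = \tfrac{N_k}{n}\,e_{N_k}(x) + \bigl(1-\tfrac{N_k}{n}\bigr)\,\bar e_{[N_k,n]}(x)
\]
places $e_n(x)$ on the segment between a point near $\nu_{\p_{k-1}}$ and a point near $\nu_{\p_k}$. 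Because that segment has length $\|\p_k-\p_{k-1}\|+O(\eta_k)\to 0$, every accumulation point of $(e_n(x))_n$ lies in the closed set $\pi\cC$, giving $\cV(x)\subseteq\pi\cC$; the reverse inclusion $\cV(x)\supseteq\pi\cC$ follows from density of $(\p_k)$ in $\cC$ together with $e_{N_{k+1}}(x)\approx\nu_{\p_k}$.

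For the dimension of $K_\eta$, the free part of $W_k$ produces $3^{\eta_k|W_k|}$ independent symbolic branches; combining bounded-distortion estimates for $f$ away from the fixed points with the polynomial distortion across the neutral sojourns in the structured part (of the type needed already for Theorem~\ref{thm:theorem1}) shows that these branches correspond to subintervals of comparable length. The mass distribution principle applied to the uniform Bernoulli-like measure on the tree of free-part choices, together with a Moran-type bookkeeping of cylinder lengths across levels, yields $\dim_{\rm H} K_\eta \geq 1 - C\eta$ for some constant $C$ depending only on $f$. The principal obstacle is reconciling two competing constraints: the convergence $e_n(x)\to\nu_\p$ forces the orbit to spend asymptotically all of its time in non-expanding neighborhoods of the fixed points, while full Hausdorff dimension requires many comparable-length cylinders at every scale. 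The resolution is that a \emph{vanishing} free fraction $\eta_k\to 0$ per window, subject only to $\eta_k|W_k|\to\infty$, produces exponentially many symbolic branches without disturbing the weak$\ast$ convergence of the empirical measures. Removing the convexity hypothesis on $\cC$ (the improvement credited to F.~Tal in the acknowledgments) is then handled entirely by the slow-walk sequence above rather than by any convex interpolation, since the interpolating segment between consecutive $\nu_{\p_{k-1}},\nu_{\p_k}$ is automatically confined to a shrinking neighborhood of $\pi\cC$ regardless of whether $\cC$ itself is convex.
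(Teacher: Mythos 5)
Your overall architecture---a slowly varying dense walk $(\p_k)\subset\cC$ with $|\p_k-\p_{k+1}|\to0$, windows of rapidly growing length in which the orbit realizes occupation proportions close to $\p_k$, and the convex-combination identity confining $e_n(x)$ to a shrinking neighbourhood of the segment between $\nu_{\p_{k-1}}$ and $\nu_{\p_k}$---is essentially the paper's bridging scheme of Sections~\ref{sec:bridging}--\ref{sec:bridging-again} (compare the rectangles $R_i^{\delta_i}$ in the proof of Lemma~\ref{lem:large-dev-tau-again}). The genuine gap is in the dimension estimate. First, the $3^{\eta_k|W_k|}$ ``free'' branches are cylinders of a nonlinear map whose full branches contain the neutral fixed points, so they are not of comparable length: the itinerary that lingers near a fixed point for the whole free block sits in an interval of only polynomially small length, while itineraries through the expanding region are exponentially small, so the Moran bookkeeping fails, and the mass distribution principle breaks exactly on the short cylinders that carry the same uniform mass $3^{-\eta_k|W_k|}$ as the long ones. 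Second, even if you confine the free blocks to a uniformly expanding core, a uniform Bernoulli-type measure has local dimension of order $\eta_k|W_k|\log 3$ divided by the total log-contraction, which (after making the parabolic sojourns' polynomial contribution negligible) tends to $\log 3/\chi$, where $\chi>\log 3$ is the average log-derivative along free itineraries. This cap does not improve as $\eta\to0$, so the claimed bound $\dim_{\rm H}K_\eta\ge 1-C\eta$ does not follow from the construction as described. A secondary problem: ``many short cycles'' of bounded length would force a definite positive fraction of time in transit between the fixed points, so $e_n(x)$ would not even accumulate on $\pi\cS$; the sojourn lengths must grow, which is what the condition $\tau_k/k\to\infty$ of Lemma~\ref{lem:31-32} encodes.

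The paper's dimension does not come from unconstrained symbolic branches at all. The key input is Lemma~\ref{lem:uniform-lower-bound}: by a distributional (arcsine/Lamperti-type) limit theorem for the occupation times of the induced Gibbs--Markov map, the set of points whose ratios $\vrt_n/\tau_n$ lie in $B_{\eps/2}(\p)$ has Lebesgue measure bounded below uniformly in $n$. One then takes as building blocks \emph{all} $n$-cylinders of the induced map with the prescribed ratio behaviour; since their total length is bounded below while each has length at most $\lambda^{-n}$, the associated sub-repeller has dimension $1-O(1/n)$ (Proposition~\ref{prop:const-An}, Lemma~\ref{lem:dim-An}) and carries a geometric (conformal) measure (Lemma~\ref{existence-m}). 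It is these geometric measures, glued window by window as in Section~\ref{sec:const-m}, that are fed to the mass distribution principle in Lemma~\ref{lem:dim}; in other words, the ``structured'' part itself supplies both the prescribed proportions and the dimension. To repair your argument you would need to replace the uniform Bernoulli measure on free choices by precisely this kind of nearly-full-dimension repeller with its conformal measure, at which point you have reconstructed the paper's mechanism rather than bypassed it.
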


Theorems~\ref{thm:theorem1} and~\ref{thm:theorem2} fall within the scope of multifractal analysis, which investigates the ``size'' of certain level sets of asymptotic quantifiers such as Birkhoff averages and local dimensions. In this type of analysis, two sorts of behaviour are frequently studied: when a limit, say the average of a function $\phi\colon X\to\bR$, exists,
\begin{equation}\label{eq:levelset}
	\cL_\phi(\alpha)
	\eqdef \Big\{x\colon \lim_{n\to\infty}\frac1n\sum_{k=0}^{n-1}\phi(f^k(x))=\alpha\Big\},
\end{equation}
or when a limit does not exist and the full range of irregular behaviour with lower and upper limits $\alpha<\beta$ is investigated:
\begin{equation}\label{eq:levelsetdiv}
	\cL_{\phi,\rm irr}(\alpha,\beta)
	\eqdef\Big\{x\colon \liminf_{n\to\infty}\frac1n\sum_{k=0}^{n-1}\phi(f^k(x))=\alpha,
				\limsup_{n\to\infty}\frac1n\sum_{k=0}^{n-1}\phi(f^k(x))=\beta\Big\}.
\end{equation}
For a map $f$ being a full shift and a function $\phi$ being H\"older continuous, the sets \eqref{eq:levelset} and \eqref{eq:levelsetdiv} can be characterized in terms of their Hausdorff dimension and topological entropy, and studied in terms of certain Gibbs measures which indeed govern the decay of deviations from the expected asymptotic average. See, for example, \cite{BarPesSch:97} for references to some classical results.

The investigation of entropy of \eqref{eq:levelset} and \eqref{eq:levelsetdiv}  is  done in terms of certain rate functions that govern the decay of the distribution of Birkhoff averages over space. This type of analysis is also referred to as level-1 large deviation results (see \cite{Ell:85,You:90} for details on rate functions and \cite{TakVer:03} for applications to multifractal analysis). 

Here, we study level-2 deviations, focusing on empirical \emph{measures} and their distributions.  The study of divergence points such as in \eqref{eq:levelsetdiv}, from the point of view of distributional measures \eqref{eqGnu} and quantification in terms of entropy, is given, for example, in \cite{ErcKueLin:05}. In a similar spirit, for dynamical systems satisfying the specification property, \cite{TakVer:03} studies the entropy of Birkhoff averages-level sets, with a particular focus also on the Manneville-Pomeau map.  As a precursor to \cite{TakVer:03,ErcKueLin:05},  \cite{Bow:73} showed that for a continuous map $f$ on a compact metric space $X$ the entropy of $f$ on the set of generic points \eqref{eqGnu} of an \emph{ergodic} probability measure $\nu$ is 
\[
	h_{\rm top}(f,\sG(\nu))
	= h_\nu(f).
\]
Non-generic points can also be described. 
By \cite[Theorem 4.1]{PfiSul:07}, for any nonempty connected compact set $\mathcal{C}\subset\cM(f)$ in the space of invariant Borel probability measures,
\begin{equation}\label{gen-K}
	h_{\rm top}(f,\{x\colon \cV(x)=\mathcal{C}\})\le \inf\{h_\mu(f)\colon\mu\in \mathcal{C}\}.
\end{equation}
The equality in \eqref{gen-K} also holds, but under stronger hypotheses, see \cite{PfiSul:07}. 
In the setting of Theorem~\ref{thm:theorem1}, one can show that~\eqref{gen-K} implies \eqref{eq:top-0}.

The study of Hausdorff dimension of level sets \eqref{eq:levelset} and \eqref{eq:levelsetdiv} usually requires some type of hyperbolicity as the results collected in \cite{BarPesSch:97} illustrate. In  \cite{GelRam:09,JohJorObePol:10}, the authors study  nonuniformly hyperbolic interval maps with indifferent fixed points. These maps can be considered as on the boundary to hyperbolicity as they have the specification property and have a finite number of ergodic non-hyperbolic measure supported on parabolic fixed points.  The studies in \cite{GelRam:09,JohJorObePol:10} follow an approach of \emph{bridging measures} based on ``exhausting'' the nonuniformly hyperbolic part of dynamics by ``uniformly hyperbolic sub-dynamics'', see also \cite{BarSch:00}. Theorems~\ref{thm:theorem1} and~\ref{thm:theorem2} focus on empirical measures and the \emph{non-hyperbolic} part. The approach in \cite{GelRam:09} will also be a key step to prove the assertion about the dimension of \eqref{eqGnu} in Theorem \ref{thm:theorem1}.

The phenomenon \eqref{shown} for a map with just two parabolic fixed points was shown in \cite{AarThaZwe:05,CoaLuz:24,BarCha:}. More generally, non-statistical behavior was observed in other contexts. For example, in the quadratic family \cite{HofKel:95} prove that there are uncountably many parameters which correspond to a map $f$ which has ``maximal oscillation'' in the sense that the empirical measures accumulate on the full simplex of $f$-invariant Borel probability measures. In \cite{Tal:22}, the maximal oscillation-property was shown to hold generically for a certain class of rational maps. Finally, \cite{AndGui:22} establish an analogous version of \eqref{shown} for certain irrational linear flows of the two-torus (see also the references therein for further results in this direction).

The paper is organised as follows. In Section~\ref{sec:setup} we introduce the more general setting which we study (for which \eqref{eq:def-f} is a particular example)  and state our main result, Theorem~\ref{thm:dim-H-C}. Theorems~\ref{thm:theorem1} and~\ref{thm:theorem2} are special cases of Theorem~\ref{thm:dim-H-C}. Section \ref{sec4} provides details on repellers and return times. In Sections~\ref{sec:coding},~\ref{sec:approx}, and~\ref{sec:bridging} we prove Theorem~\ref{thm:dim-H-C}. In Section~\ref{sec:examples} we provide further examples to which Theorem~\ref{thm:dim-H-C} applies.

\section{Assumptions and statement of results}\label{sec:setup}

\subsection{Assumptions}

We recall some classical definitions and make precise the assumptions we use throughout the paper.
Throughout, we let $X$ denote either the circle $\mathbb{S}^1$ or the interval $[0,1]$.
We say that a non-singular transformation $f\colon X\to X$ is \emph{Markov} if there exists a countable partition (modulo $\Leb$) $\cP$  of $X$ into closed sub-intervals such that the restriction of $f$ to any partition elements is a bijection onto a union of partition elements. A Markov map $f$ with partition $\cP$ is \emph{topologically mixing} if for every $a,b \in \cP$ there exists an $N\in\bN$ such that $f^n (a) \cap b \neq \emptyset$ for every $n \geq N$. 

\begin{assumption}\label{asm:m}
$f\colon X \to X$ is a Markov map with respect to a partition $\cP$.
\end{assumption}

Throughout let $\tau\colon X \to \N$ denote the \emph{first hit-time}
\[
	\tau(x) \colon X \to \N,\quad
	\tau (x) \eqdef \min \{ n \in\bN \colon f^n (x) \in Y \}.
\]	
For a subset $Y$ which is a union of elements in $\cP$ we define the \emph{first return map} 
\[
	F\colon Y \to Y,\quad 
	F(x) \eqdef f^{\tau(x)}(x).
\]	 
We say that $F$ is a $C^2$ \emph{uniformly expanding Gibbs-Markov map} if there exists a partition $\cQ$ of $Y$ which is a refinement of  $\cP \cap Y$  with respect to which $F$ is Markov, and if the following additional properties hold:
  \begin{itemize}[leftmargin=0.5cm ]
  \item \textbf{(finite images)} $\Card \{ F (a) \colon a \in \cQ \} < \infty$, that is, there exist $Y_1,\ldots, Y_L$ so that each $Y_i$ is a union of elements of $\cQ$ and $\{F(a)\colon a\in \cQ\} = \{ Y_1, \ldots, Y_L\}$.
  
  \item \textbf{(bounded distortion)} There exist $\theta \in (0,1)$ and $C > 0$ so that the function $\log \frac{ d m } { dm \circ F }$ is $d_\theta$-Lipschitz on elements of $\cQ$,
  where $d_{\theta}$ is the metric $d_{\theta} (x,y) \coloneqq \theta^{s(x,y)}$ and 
  \[
    s(x,y) \eqdef \inf \big\{ n \in\bN_0 \colon F^n(x), F^n(y) \text{ lie in different elements of }\cQ \big\}
    .
  \]
  \item \textbf{($C^{ 2 }$ branches)} For every $a\in\cQ$ the map $F|_{\intr(a)}\colon \intr(a) \to F(\intr(a))$ is a $C^{ 2 }$ diffeomorphism\footnote{Notice that we do not necessarily assume that $F$ has $C^{ 2 }$ extension to the closure of $a$.}.
  
  \item \textbf{(uniform expansion)} There exists $\lambda > 1$ such that $\inf_{ x\in  Y} |F'(x)|> \lambda$.
\end{itemize}

\begin{assumption}\label{asm:gm}
  There exists a union of partition elements $Y$ so that the first return map $F\colon Y \to Y$ is a topologically mixing $C^2$ uniformly expanding Gibbs-Markov map with respect to a partition $\cQ$.
\end{assumption}

\begin{remark}\label{rem:tau-constant-on-P}
  Up to taking a refinement of $\cQ$, we can assume that $\tau$ takes a finite and constant value on the interior of every $a\in \cQ$.
\end{remark}

\begin{remark}
  As the first return map $F$ is Gibbs-Markov, it preserves an absolutely continuous invariant probability measure $\mu_Y$  (see, for example,~\cite[Chapter 4]{Aar:1997}). This implies that $f$ preserves the  $\sigma$-finite $f$-invariant absolutely continuous ergodic measure $\mu$ given by
  \begin{equation}\label{eq:def-mu}
    \mu ( A ) \eqdef \sum_{ n = 0 }^{ \infty }  \mu_Y ( \{  \tau > n \} \cap F^{-n} A )
    .
  \end{equation}
  From \eqref{eq:def-mu} it follows that $\mu_Y = \mu|_{Y}$ and that
  \[
  \mu ( X ) < \infty\quad \text{ if and only if }\quad \int \tau \, d\mu_Y < \infty.
  \]
\end{remark}

\begin{assumption}\label{asm:dyn-sep}
  There exist constants $d \in\bN$, $\alpha \in (0,1)$, and $\gamma_1, \ldots, \gamma_d>0$, and a partition (mod $\Leb$) of $X \setminus Y$ into open sets $X_1,\ldots,X_d$ such that for every $i,j\in\{1,\ldots,d\}$
\begin{enumerate}[leftmargin=0.6cm ]
\item there is a single fixed point $\xi_j \in X_j$,
\item\label{itm:dyn-sep} for $i \neq j$, orbits cannot pass from $X_i$ to $X_j$ without first passing through $Y$, that is, if $f^{ n } (x) \in X_{ i }$ and $f^{ n + m }(x) \in X_{ j }$ for some $i \neq j$ and $m\in\bN$, then there is $k \in\{1,\ldots,m-1\}$ such that $f^{ n+k }( x) \in Y$,
\item\label{itm:reg-var} $\mu_Y ( \tau^{(j)} > n ) \sim \gamma_j n^{\alpha}$ where 
\[
    	\tau^{(j)}(y) \eqdef \Card \big\{ n \le \tau (y) \colon f^n (y) \in X_j\big\}.
\]	
\end{enumerate}
\end{assumption}

\begin{remark}\label{rem:accum}
  Notice that each connected component of $J_n \eqdef \{ x \in X \setminus Y \colon \tau(x) = n \}$   is the preimage of an interval (a connected component of $Y$) by one of the branches of $f$ (a $C^2$ diffeomorphism). Thus, the sets $J_n$ necessarily accumulate at fixed points of $f$, in the sense that 
  \[
    \bigcap_{ n\in\bN } \overline{ \bigcup_{ k > n }  J_k }
    = \{ \xi_1, \ldots, \xi_d \}.
  \]
\end{remark}

\begin{figure}[ht] 
 \begin{overpic}[scale=.6]{./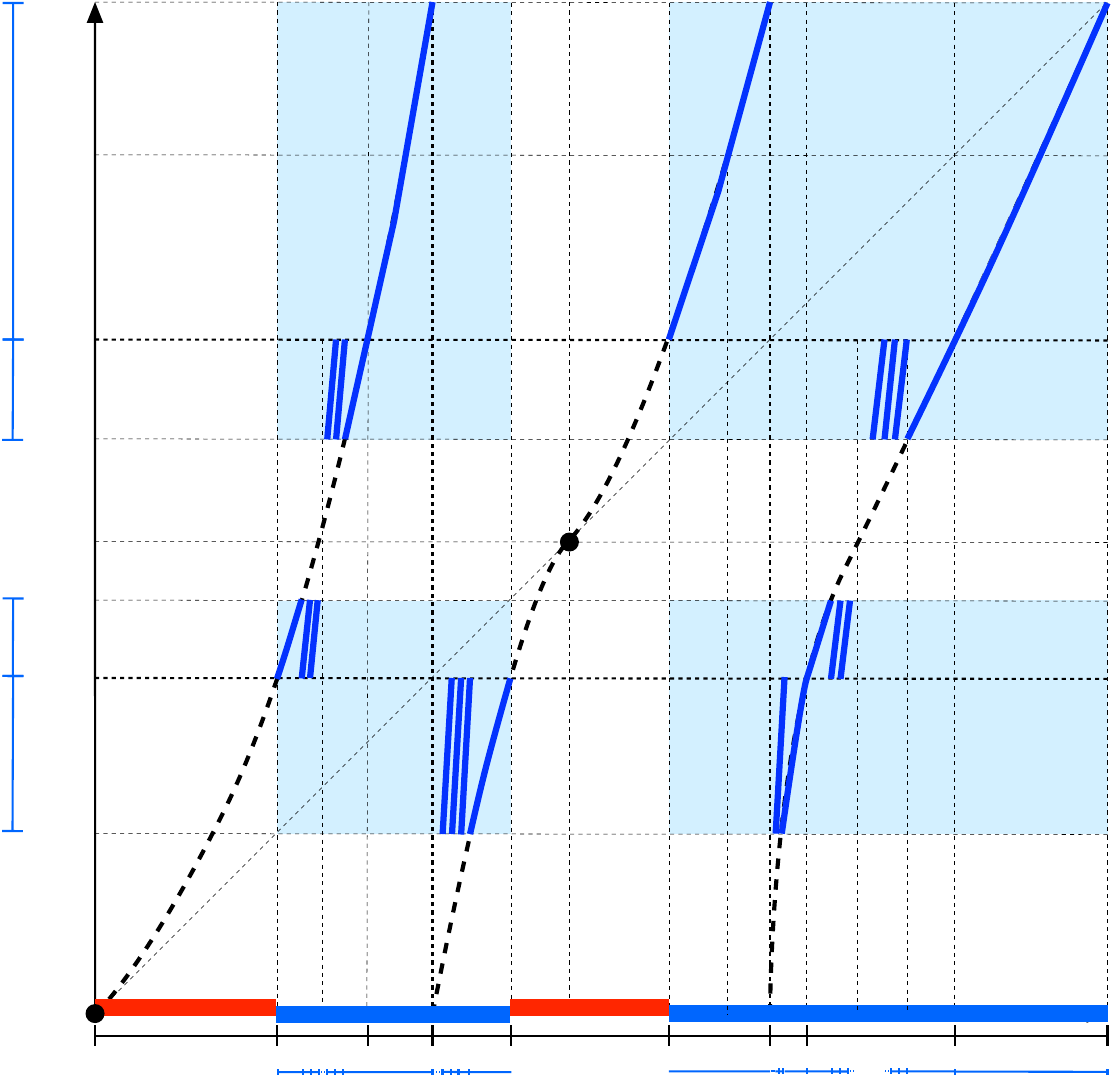}
 	\put(20,8){\small{$X_1$}}
 	\put(55,8){\small{$X_2$}}
 	\put(96,8){\small{$Y$}}
 	\put(102,-1){\small{$\cQ$}}
 	\put(-1,2){\rotatebox{90}{\small{$\{Y_1,\ldots,Y_4\}$}}}
 	\put(8,0){$\xi_1$}
 	\put(50,0){$\xi_2$}
\end{overpic}
  \caption{An example of a Markov map $f\colon[0,1]\to[0,1]$ (black dashed curve) with two parabolic fixed points $\xi_1,\xi_2$ and its Gibbs-Markov first return map $F\colon Y\to Y$ (blue curve) on the set $Y$ (union of blue intervals). See Example~\ref{ex:d+D}. The collection $\{Y_1,\ldots,Y_4\}$ of sets $\{F(a)\colon a\in\cQ\}$ cover $Y$, see Assumption~\ref{asm:gm}.}
 \label{fig.F}
\end{figure}

\begin{remark}\label{reg-var}
  Item~\eqref{itm:dyn-sep} in Assumptions~\ref{asm:dyn-sep} tells us that for any $x \in Y$, only one of the values $\tau^{(1)} (x), \ldots, \tau^{(d)}(x)$ can be non-zero. In particular,
  \begin{equation}
    \label{eq:dyn-sep}
    \tau = \tau^{(1)} + \cdots + \tau^{(d)} - 1
    ,
  \end{equation}
  and so
  \[
    \mu ( \tau > n) \sim \gamma n^{ \alpha }, \quad \text{where} \quad
    \gamma = \gamma_1 + \cdots + \gamma_d.
  \]
  In particular $\int \tau \, d\mu_Y = \infty$ and so $\mu( X ) = \infty$.
\end{remark}

\subsection{Statement of results}
\label{sec:statements}

Throughout the remainder of the article we assume that $f$ is a map satisfying Assumptions~\ref{asm:m}--\ref{asm:dyn-sep}. Compare also Figure \ref{fig.F}. We consider
\[
  \cS \eqdef \left\{ \p \in [0,1]^d \colon \sum_{ j = 1 }^{d} p_j = 1 \right\},
  \quand
  \pi \colon \cS \to \cM (f), \quad
  \pi ( \p ) = \nu_{\p} \coloneqq \sum_{ j = 1 }^{d} p_j\delta_{\xi_j}
  ,
\]
Denoting by $e_n(x) \coloneqq \sum_{ k = 0 }^{ n - 1 } \delta_{ f^k (x) }$ the $n$th empirical measure of $x \in X$, we consider  
\[
	\cV (x) \coloneqq \big\{ \text{limit points of } (e_n (x))_{n\in\bN} \big\}
\]
of weak$\ast$ limit points of the empirical measures.
The following is our main result.

\begin{manualtheorem}{A}\label{thm:dim-H-C}
 If Assumptions~\ref{asm:m}--\ref{asm:dyn-sep} hold, then for every closed connected set $\cC\subset \cS$,
  \[
    \dim_{\rm H} \{ x \colon \cV(x) =  \pi \cC \} = 1. 
  \]
\end{manualtheorem}

Considering the special case that $\cC = \{ \p \}$, one obtains:
\begin{corollary}
  \label{cor:dim-H-nup}
  If Assumptions~\ref{asm:m}--\ref{asm:dyn-sep} hold, then
  $\dim_{\rm H } \sG ( \nu_{\p} ) = 1$,
  for every $\p \in \cS$.
\end{corollary}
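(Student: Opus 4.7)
The plan is to derive the corollary directly from Theorem~\ref{thm:dim-H-C} by taking $\cC$ to be a singleton in $\cS$. Given $\p\in\cS$, set $\cC\eqdef\{\p\}$. This is trivially a closed connected subset of $\cS$, so Theorem~\ref{thm:dim-H-C} applies and gives $\dim_{\rm H}\{x\colon \cV(x)=\pi\cC\}=1$, where $\pi\cC=\{\nu_{\p}\}$.

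The only remaining step is to identify $\sG(\nu_{\p})$ with this level set. Since $X$ is compact, the space $\cM(f)$ endowed with the weak$\ast$ topology is a compact metrizable space, hence the sequence $(e_n(x))_{n\in\bN}$ of empirical measures (viewed in the space of Borel probability measures on $X$) has at least one weak$\ast$ accumulation point; that is, $\cV(x)\neq\emptyset$ for every $x\in X$. In a compact metric space, a sequence converges to a point if and only if that point is its unique accumulation point. Therefore $e_n(x)\to\nu_{\p}$ in the weak$\ast$ topology if and only if $\cV(x)=\{\nu_{\p}\}=\pi\cC$, which by the definition \eqref{eqGnu} means exactly that
\[
  \sG(\nu_{\p})=\{x\colon \cV(x)=\pi\cC\}.
\]
Combining this set equality with Theorem~\ref{thm:dim-H-C} yields $\dim_{\rm H}\sG(\nu_{\p})=1$.

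There is no real obstacle here; the corollary is a direct specialization of the main theorem, and the only point worth spelling out is the compactness argument that upgrades ``all accumulation points equal $\nu_{\p}$'' to ``convergence to $\nu_{\p}$.'' All difficulty is absorbed into the proof of Theorem~\ref{thm:dim-H-C}.
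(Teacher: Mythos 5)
Your proposal is correct as a formal deduction, but it takes the opposite route from the paper. The paper does state Corollary~\ref{cor:dim-H-nup} as the specialization $\cC=\{\p\}$ of Theorem~\ref{thm:dim-H-C}, yet its actual proof of the corollary is the direct construction of Section~\ref{sec:bridging}: one builds approximating repellers $\cA_i=\cA(\eps_i,\p)$ via Proposition~\ref{prop:approx-new}, concatenates them into a Cantor-type set $\Gamma$ with a bridging measure $m$, shows $\Gamma\subset\sG(\nu_\p)$ using the return-time coding (Corollary~\ref{cor:coding-single-point}, Lemma~\ref{lem:Vx-np}), and gets $\dim_{\rm H}\Gamma=1$ by the mass distribution principle (Lemma~\ref{lem:dim}); Theorem~\ref{thm:dim-H-C} is then proved afterwards (Section~\ref{sec:bridging-again}) by modifying this construction, choosing targets $\p_i$ running densely through $\cC$ with $|\p_i-\p_{i+1}|\to0$. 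Your deduction is nonetheless legitimate and non-circular, because the paper's proof of Theorem~\ref{thm:dim-H-C} uses only the lemmas and constructions of Sections~\ref{sec:approx}--\ref{sec:bridging}, never the statement of the corollary itself; and your compactness argument identifying $\sG(\nu_\p)=\{x\colon\cV(x)=\{\nu_\p\}\}$ (nonemptiness of $\cV(x)$ plus ``convergence iff unique accumulation point'' in the compact metrizable space of probability measures on $X$) is exactly the point that needs to be said, and is the same observation underlying the passage from Theorem~\ref{thm:coding} to Corollary~\ref{cor:coding-single-point}. What the two routes buy: yours is a two-line reduction that absorbs all difficulty into Theorem~\ref{thm:dim-H-C}, whereas the paper proves the singleton case first precisely because it is the cleaner setting in which to develop the bridging machinery that the general case then reuses; if you were asked for a self-contained argument rather than a citation of the main theorem, you would need to reproduce that construction.
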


Sections~\ref{sec:coding}--\ref{sec:bridging-again} are devoted to proving Theorem~\ref{thm:dim-H-C}. It will be convenient to first prove Corollary~\ref{cor:dim-H-nup} which is a special case.

The first key step is to establish a coding of generic points in terms of the asymptotic behaviour of their return times. For this we will consider the joint behaviour of the return times $\tau^{(1)},\ldots,\tau^{(d)}$, and their Birkhoff sums under $F$. To this end we introduce
\[
	\vrt \eqdef ( \tau^{ (1) }, \ldots, \tau^{(d)} )\colon Y\to  \N^d,
\]
and let
\[
  \vrt_k \eqdef \sum_{j = 0}^{ k - 1 } \vrt \circ F^j,
  \quand
  \tau_k \eqdef \sum_{ j = 0 }^{ k - 1 } \tau \circ F^j,
\]
and define the $\tau^{(i)}_k$ analogously for $i=1,\ldots,d$.
In Section~\ref{sec:coding} we will show
\begin{equation}\label{eq:coding}
	e_n(x)\to\nu_\p\quad\Longleftrightarrow\quad
	\frac{\vrt_k}{\tau_k}(x)\to\p.
\end{equation}
Relation \eqref{eq:coding} reduces the problem of understanding the observables $e_n\colon X \to \cM(f)$ under the dynamics of $f$, to understanding the Birkhoff sums of the function $\vrt \colon Y \to \N^d$ under the dynamics of uniformly expanding Gibbs-Markov map $F$.

By Remark~\ref{rem:tau-constant-on-P}, $\vrt$ is piecewise constant. 
So, together with \eqref{eq:coding}, we obtain a symbolic description of $\mathscr{G} (\nu_{\p})$ (albeit one that requires an infinite alphabet) and can start building approximations of $\mathscr{G} ( \nu_{\p} )$.
Namely, we will construct collections $\cA$ of cylinder sets where the return times have a certain prescribed behaviour up to some finite time.
This will be done in such a way that: the limit points of $(\vrt_k/\tau_k)_k$ will all be close to $\p$
for points in the maximal invariant set of $\cA$; and, the dimension of this maximal invariant set is close to $1$. See Sections \ref{sec4} and~\ref{sec:approx}.

In Section~\ref{sec:bridging} we then consider a sequence $(\cA_i)_i$ of increasingly better approximations to $\mathscr{G}(\nu_\p)$ to build a set $\Gamma \subset \mathscr{G} ( \nu_\p )$. We conclude by arguing that $\dim_{\rm H} \Gamma = 1$.

Finally, in Section~\ref{sec:bridging-again} we explain how the proof of Corollary~\ref{cor:dim-H-nup} can be strengthened to prove Theorem~\ref{thm:dim-H-C}.

\section{Coding of  $\nu_{\bar p}$-generic points}\label{sec:coding}

In this section we will explore equivalent characterizations of the convergence of the empirical measures in terms of the return times $\vrt$ by relating $\cV(x)$ and the set
\[
  \cT(x) \eqdef \left\{ \text{limit points of the sequence } \left(\frac{\vrt_k }{\tau_k}(x)\right)_{k\in\bN} \right\} \subset [0,1]^d
  .
\]

\begin{theorem}\label{thm:coding}
 Let be $\cT(x)$ a closed connected subset of $\cS$ and suppose that
\begin{equation}\label{eqcondtaus}
	\lim_{k\to\infty}\left| \frac{\vrt_k}{\tau_k}(x) - \frac{\vrt_{k+1}}{\tau_{k+1}}(x) \right|=0,
\end{equation}
Then, $\cV(x) = \pi  (\cT(x) )$.
\end{theorem}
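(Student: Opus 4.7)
The plan is to first establish the following claim: if $\vrt_{k_m}(x)/\tau_{k_m}(x) \to \p \in \cS$ along some subsequence $k_m \to \infty$, then $e_{\tau_{k_m}(x)}(x) \to \nu_\p$ in weak-$*$. I would then extend this to arbitrary indices $n$, not just return times, using the hypothesis~\eqref{eqcondtaus}. To prove the claim, I decompose
\[
  \tau_k(x) \cdot e_{\tau_k(x)}(x)
  = \sum_{j = 0}^{k-1} \delta_{F^j(x)}
  + \sum_{i=1}^{d}
  \sum_{\substack{0 \le j < k \\ i(j) = i}}
  \sum_{\ell = 1}^{\tau(F^j(x))-1} \delta_{f^\ell(F^j(x))},
\]
where $i(j) \in \{1, \ldots, d\}$ is the unique index with $f^\ell(F^j(x)) \in X_{i(j)}$ for $1 \le \ell < \tau(F^j(x))$; its existence is Assumption~\ref{asm:dyn-sep}(\ref{itm:dyn-sep}). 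The first sum has total mass $k$, and \eqref{eq:dyn-sep} together with $\p \in \cS$ force $k/\tau_k(x) \to 0$, so this sum contributes negligibly.

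The crux is to show that for each $i$, the normalized $i$-th inner sum converges weakly to $\delta_{\xi_i}$. The key observation is that for any $y \in Y$ with $i(y) = i$, one has $f^\ell(y) \in J_{\tau(y)-\ell}$ for $1 \le \ell < \tau(y)$. By Remark~\ref{rem:accum}, given any neighborhood $U$ of $\xi_i$ there exists $N_U \in \bN$ such that $J_n \subset U$ for all $n > N_U$. Hence each block contains at most $N_U$ iterates outside $U$ (namely those with $\ell \ge \tau(y) - N_U$), irrespective of the block's length. Summed over $0 \le j < k$, the total mass of iterates outside $U$ in the $i$-th inner sum is bounded by $N_U \cdot k = o(\tau_k(x))$, which suffices to conclude via a standard approximation over continuous test functions.

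To pass from return times to arbitrary $n$, I observe that a short calculation shows hypothesis~\eqref{eqcondtaus} is equivalent to $\tau(F^k(x))/\tau_{k+1}(x) \to 0$, because $\vrt_{k+1} - \vrt_k$ is supported on a single coordinate with magnitude comparable to $\tau(F^k(x))$. Writing $n = \tau_k(x) + r$ with $k$ maximal and $0 \le r < \tau(F^k(x))$, this gives $r/n \to 0$ as $n \to \infty$; hence $e_n(x) - e_{\tau_k(x)}(x) \to 0$ weakly. The two inclusions in $\cV(x) = \pi(\cT(x))$ then follow by extracting subsequences: any $\nu \in \cV(x)$ arises from some $e_{n_j}(x) \to \nu$, along which (up to a sub-subsequence) $\vrt_{k_j}/\tau_{k_j} \to \p \in \cT(x)$ by compactness of $\cS$, giving $\nu = \nu_\p$; the reverse inclusion is immediate.

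The main obstacle is the concentration argument for the $X_i$-block contributions at $\xi_i$; while Remark~\ref{rem:accum} supplies the essential input, some care is needed for blocks of short return time whose iterates need not be close to $\xi_i$. The uniform bound $N_U \cdot k$ on the combined contribution of such stray iterates, together with $k/\tau_k(x) \to 0$, is the crucial estimate that closes the argument.
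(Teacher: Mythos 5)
Your first step (the claim that $\vrt_{k_m}/\tau_{k_m}(x)\to\p\in\cS$ forces $e_{\tau_{k_m}(x)}(x)\to\nu_{\p}$, via the block decomposition, Remark~\ref{rem:accum}, and $k_m/\tau_{k_m}(x)\to 0$) is sound and runs parallel to what the paper does with Lemmas~\ref{lem:limit-points}, \ref{lem:31-32} and \ref{lem:tilde-T}. The gap is in the passage from return times to arbitrary times $n$. You assert that \eqref{eqcondtaus} is \emph{equivalent} to $\tau(F^k(x))/\tau_{k+1}(x)\to 0$, and you use precisely the direction ``\eqref{eqcondtaus} $\Rightarrow$ $\tau(F^k(x))/\tau_{k+1}(x)\to 0$'' to get $r/n\to 0$. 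That direction is false. By \eqref{eq:abcd-trivial}, the increment of the ratio vector is
\[
\frac{\vrt_{k+1}}{\tau_{k+1}}(x)-\frac{\vrt_{k}}{\tau_{k}}(x)
=\Bigl(\frac{\vrt\circ F^{k}}{\tau\circ F^{k}}(x)-\frac{\vrt_{k}}{\tau_{k}}(x)\Bigr)\frac{\tau(F^{k}(x))}{\tau_{k+1}(x)},
\]
and the first factor can itself tend to $0$. Concretely, suppose every excursion lies in $X_{1}$ and the return times grow so fast that $\tau(F^{k}(x))\approx\tau_{k}(x)$ for all $k$ (say, roughly doubling). Then $\vrt_{k}/\tau_{k}(x)\to(1,0,\ldots,0)$, so $\cT(x)$ is a single point of $\cS$ and \eqref{eqcondtaus} holds, yet $\tau(F^{k}(x))/\tau_{k+1}(x)\approx 1/2$ and $r/n$ is bounded away from $0$ along a subsequence. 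So the step ``$r/n\to 0$, hence $e_{n}(x)-e_{\tau_{k}(x)}(x)\to 0$ weakly'' is unjustified, and with it the forward inclusion $\cV(x)\subset\pi(\cT(x))$, which is the substance of the theorem.

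The conclusion you want at that point can still be rescued, but it needs a different mechanism, which is the one the paper uses: between consecutive return times, each coordinate $n\mapsto e_{n}(x)(X_{j})$ is monotone (Assumption~\ref{asm:dyn-sep}\eqref{itm:dyn-sep} implies the orbit stays in a single $X_{i}$ during the excursion), so for $\tau_{k}(x)\le n\le\tau_{k+1}(x)$ one has the sandwich
\[
\min\Bigl\{\tfrac{\tau_{k}^{(j)}(x)}{\tau_{k}(x)},\tfrac{\tau_{k+1}^{(j)}(x)}{\tau_{k+1}(x)}\Bigr\}
\le e_{n}(x)(X_{j})\le
\max\Bigl\{\tfrac{\tau_{k}^{(j)}(x)}{\tau_{k}(x)},\tfrac{\tau_{k+1}^{(j)}(x)}{\tau_{k+1}(x)}\Bigr\},
\]
i.e.\ \eqref{eq:tau-ineq}. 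Combined with \eqref{eqcondtaus} this shows the vectors $(e_{n}(x)(X_{1}),\ldots,e_{n}(x)(X_{d}))$ stay within $o(1)$ of the set of limit points of $(\vrt_{k}/\tau_{k}(x))_{k}$, which is what controls the intermediate times; your own concentration estimate (or the paper's Proposition~\ref{prop:subset} plus Portmanteau) then identifies the limit measures as $\nu_{\p}$ with $\p\in\cT(x)$. In the counterexample above this is exactly what happens: $e_{n}$ does converge to $\delta_{\xi_{1}}$ at intermediate times, but because of the sandwiching, not because $r/n\to 0$. As written, your proof has a genuine hole at this step; replacing the false equivalence by the monotonicity/sandwich argument closes it.
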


The above theorem will be proven at the end of this section. When $\cC$ contains a single point $\p$, Theorem \ref{thm:coding} reduces to the following result.

\begin{corollary}\label{cor:coding-single-point}
  For all $x\in X$ and $\p \in \cS$,
  \[
  	e_n(x)\to \nu_\p\quad\Longleftrightarrow\quad
  	\frac{\vrt_k}{\tau_k}(x)\to\p.
  \]
\end{corollary}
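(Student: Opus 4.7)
The corollary is the special case $\cC=\{\p\}$ of Theorem~\ref{thm:coding}, augmented with a converse. My plan is to prove the two implications separately. For the direction $\vrt_k/\tau_k(x)\to\p \Longrightarrow e_n(x)\to\nu_\p$, I would apply Theorem~\ref{thm:coding} with $\cT(x)=\{\p\}$: the singleton is trivially closed, connected, and lies in $\cS$ by hypothesis, while condition~\eqref{eqcondtaus} is automatic for a convergent sequence. Theorem~\ref{thm:coding} then delivers $\cV(x)=\pi(\{\p\})=\{\nu_\p\}$, which is exactly $e_n(x)\to\nu_\p$.

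For the converse $e_n(x)\to\nu_\p \Longrightarrow \vrt_k/\tau_k(x)\to\p$, I would proceed by a direct argument rather than attempting to invoke Theorem~\ref{thm:coding} in reverse, since verifying \eqref{eqcondtaus} for an arbitrary $x$ is equivalent to $\tau(F^k x)/\tau_k\to 0$, which typically fails in the heavy-tailed regime ($\alpha<1$). For each $i\in\{1,\ldots,d\}$ I would select a continuous function $\varphi_i\colon X\to[0,1]$ with $\varphi_i\equiv 1$ on a neighborhood $U_i\subset X_i$ of $\xi_i$ and $\varphi_i\equiv 0$ outside $X_i$, so that $\int\varphi_i\,d\nu_\p=p_i$ and the hypothesis gives $\tfrac{1}{n}\sum_{j<n}\varphi_i(f^j x)\to p_i$. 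The essential dynamical input is the existence of a constant $N_i$ such that every maximal orbit segment contained in $X_i\setminus U_i$ has length at most $N_i$; this holds because $\xi_i\in U_i$ is the only non-expanding point in $X_i$, so $f$ is uniformly expanding on $X_i\setminus U_i$ and orbits there must exit into $Y$ within $N_i$ iterates. Consequently, among the first $\tau_k(x)$ iterates, at most $kN_i$ lie in $X_i\setminus U_i$ and exactly $k$ lie in $Y$, which yields
\[
  \Bigl|\sum_{j=0}^{\tau_k-1}\varphi_i(f^j x)-\tau^{(i)}_k(x)\Bigr|\le k(N_i+1).
\]
Since $\nu_\p$ is supported on $\{\xi_1,\ldots,\xi_d\}\subset X\setminus Y$, the convergence $e_n\to\nu_\p$ forces the density of visits to $Y$ to vanish, i.e., $k(n)/n\to 0$, and specializing to $n=\tau_k$ gives $k/\tau_k\to 0$. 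Dividing the displayed inequality by $\tau_k$ and passing to the limit along the subsequence $n=\tau_k$ produces $\tau^{(i)}_k/\tau_k\to p_i$ for each $i$, i.e., $\vrt_k/\tau_k(x)\to\p$.

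The main technical step will be establishing the uniform bound $N_i$ on orbit segment lengths in $X_i\setminus U_i$. While this is intuitively clear from the parabolic nature of $\xi_i$ combined with the uniform expansion away from it, the precise justification relies on Assumptions~\ref{asm:m}--\ref{asm:dyn-sep}: the $C^2$ branches of $f$ and the uniform expansion of the first return map $F$ on $Y$ together force $|f'|$ to be uniformly bounded below on $X_i\setminus U_i$, from which the desired uniform escape-time bound follows.
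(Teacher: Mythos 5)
Your forward implication coincides with the paper's reading of the corollary: with $\cT(x)=\{\p\}$ the hypotheses of Theorem~\ref{thm:coding} are trivially satisfied (a singleton is closed and connected, and \eqref{eqcondtaus} is automatic for a convergent sequence), so $\cV(x)=\{\nu_{\p}\}$ and hence $e_n(x)\to\nu_{\p}$. For the converse you take a genuinely different and much heavier route than the paper. The paper gets it essentially for free from the Portmanteau computation \eqref{eq:portman}: since $\nu_{\p}(\partial X_j)=0$ for every $j$, the convergence $e_n(x)\to\nu_{\p}$ gives $e_n(x)(X_j)\to p_j$, and evaluating along the subsequence $n=\tau_k(x)$, where $\bigl(e_{\tau_k}(x)(X_1),\ldots,e_{\tau_k}(x)(X_d)\bigr)=\frac{\vrt_k}{\tau_k}(x)$ (the identity displayed in the proof of Theorem~\ref{thm:coding}), yields $\frac{\vrt_k}{\tau_k}(x)\to\p$ with no excursion bookkeeping, no continuous test functions, and no escape-time estimate.

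The genuine gap in your version is the justification of the uniform escape-time bound $N_i$. You claim Assumptions~\ref{asm:m}--\ref{asm:dyn-sep} force $|f'|$ to be uniformly bounded below on $X_i\setminus U_i$; they do not. The $C^2$ and uniform-expansion hypotheses in Assumption~\ref{asm:gm} concern only the first return map $F$ on $Y$, and nothing whatsoever is assumed about $f'$ on $X\setminus Y$; Example~\ref{ex:d+D} even allows critical points of $f$ at preimages of the neutral fixed points, which can lie in $X_i\setminus U_i$. Moreover, even a genuine expansion estimate for $f$ on $X_i\setminus U_i$ would not by itself bound sojourn times there (uniform expansion is compatible with, say, a periodic orbit inside that region, near which sojourns are arbitrarily long); what excludes this is the Markov structure. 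The fact you actually need is exactly Remark~\ref{rem:accum}: for any compact $K$ disjoint from $\{\xi_1,\ldots,\xi_d\}$ one has $\sup_{K\setminus Y}\tau<\infty$ — this is precisely how the paper argues in Lemma~\ref{lem:31-32} — and taking $K=\overline{X_i\setminus U_i}$ gives your $N_i$ (after the first visit of an excursion to $X_i\setminus U_i$, the orbit reaches $Y$ within $N_i$ steps, so each excursion spends at most $N_i+1$ iterates there). With that substitution your argument closes; alternatively, note that $\overline{X_i\setminus U_i}$ misses the support of $\nu_{\p}$, so Portmanteau already gives $e_n(x)(\overline{X_i\setminus U_i})\to0$ and the whole escape-time apparatus is unnecessary. (A minor further point: your parenthetical claim that \eqref{eqcondtaus} is equivalent to $\tau(F^k x)/\tau_k\to0$ is only an implication in one direction, but nothing in your argument depends on it.)
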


We begin with the following intermediate result.

\begin{proposition}\label{prop:subset}
  If $\mathcal{T}(x) \subset \cS$, then $\cV(x) \subset \pi\cS$.
\end{proposition}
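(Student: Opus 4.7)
The plan is to exploit the equivalence between $\cT(x)\subset\cS$ and the condition $k/\tau_k(x)\to 0$, and then to use the clustering of long excursions at the fixed points given by Remark~\ref{rem:accum}. First, I would reformulate the hypothesis: summing the identity~\eqref{eq:dyn-sep} along $k$ iterates of $F$ yields $\tau^{(1)}_k+\cdots+\tau^{(d)}_k=\tau_k+k$, so the entries of $\vrt_k/\tau_k$ sum to $1+k/\tau_k$. Since $\tau\ge 1$ forces $k/\tau_k\in[0,1]$, the condition $\cT(x)\subset\cS$ (every limit point has entries summing to~$1$) is equivalent to $k/\tau_k(x)\to 0$ as $k\to\infty$.

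Next, I would fix an arbitrary open neighborhood $U$ of $\{\xi_1,\ldots,\xi_d\}$ and prove that $e_n(x)(X\setminus U)\to 0$. By Remark~\ref{rem:accum} there exists $N=N(U)$ such that $\{y\in X\setminus Y\colon \tau(y)>N\}\subset U$. Assuming $x\in Y$ (a harmless reduction, since replacing $x$ by its first entry point into $Y$ alters neither $\cV(x)$ nor the asymptotics of $\vrt_k/\tau_k$), Assumption~\ref{asm:dyn-sep}\eqref{itm:dyn-sep} ensures that during each excursion between consecutive visits to $Y$ the iterates $f(y),\ldots,f^{\tau(y)-1}(y)$ lie in a single $X_j$ and satisfy $\tau(f^i(y))=\tau(y)-i$; hence at most $N$ iterates of any such excursion can lie outside $U$. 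Choosing $k=k(n,x)$ with $\tau_k(x)\le n<\tau_{k+1}(x)$, the orbit $x,f(x),\ldots,f^{n-1}(x)$ contains at most $k+1$ returns to $Y$ together with at most $k+1$ excursions (including the partial one at the end), each contributing at most $N$ iterates outside $U$, so
\[
  e_n(x)(X\setminus U)\le \frac{(k+1)(N+1)}{\tau_k(x)}\xrightarrow[n\to\infty]{}0.
\]

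To conclude, any $\mu\in\cV(x)$ is a weak-$\ast$ limit $e_{n_\ell}(x)\to\mu$, and weak-$\ast$ upper semicontinuity on the closed set $X\setminus U$ yields $\mu(X\setminus U)=0$. Since $U$ was arbitrary, $\mu$ is supported on $\{\xi_1,\ldots,\xi_d\}$, so $\mu\in\pi\cS$. I expect the only delicate point will be the bookkeeping for the partial excursion contained in the interval $[\tau_k(x),n)$, which contributes at most $N$ extra iterates outside $U$ and is absorbed into the constants.
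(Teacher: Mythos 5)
Your argument is, in substance, the paper's own proof with its two preparatory lemmas inlined: the coordinate-sum identity $\tau^{(1)}_k+\cdots+\tau^{(d)}_k=\tau_k+k$ reducing $\cT(x)\subset\cS$ to $k/\tau_k(x)\to 0$ is exactly the paper's deduction of \eqref{eq:birk-tau}, and your excursion count (at most $N(U)$ iterates per excursion with $\tau\le N(U)$, plus the at most $k+1$ visits to $Y$) is the content of the proof of Lemma~\ref{lem:31-32}, phrased with neighbourhoods $U$ of the fixed points instead of compact sets $K$ avoiding them; the paper's Lemma~\ref{lem:limit-points} then plays the role of your final Portmanteau step.

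That final step, however, is stated with the semicontinuity pointing the wrong way. For the closed set $F=X\setminus U$, weak-$\ast$ convergence $e_{n_\ell}(x)\to\mu$ gives only $\limsup_\ell e_{n_\ell}(x)(F)\le\mu(F)$, i.e.\ the vacuous $0\le\mu(X\setminus U)$; upper semicontinuity of $\nu\mapsto\nu(F)$ does not rule out mass from inside $U$ accumulating on $\partial U\subset X\setminus U$ (compare $\delta_{x_\ell}\to\delta_y$ with $x_\ell\in U$ and $y\in\partial U$). The conclusion is still within reach of what you proved, precisely because your estimate holds for \emph{every} open neighbourhood $U$ of $\{\xi_1,\ldots,\xi_d\}$: for $W_m=\{x\colon d(x,\{\xi_1,\ldots,\xi_d\})>1/m\}$ take $U_m=\{x\colon d(x,\{\xi_1,\ldots,\xi_d\})<1/m\}$, so that $e_{n_\ell}(x)(W_m)\le e_{n_\ell}(x)(X\setminus U_m)\to 0$, and the open-set half of the Portmanteau theorem gives $\mu(W_m)\le\liminf_\ell e_{n_\ell}(x)(W_m)=0$; letting $m\to\infty$ yields $\mu(X\setminus\{\xi_1,\ldots,\xi_d\})=0$, hence $\mu\in\pi\cS$. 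So the proposal is correct up to this one misapplied inequality, which needs the routine repair above (essentially what the paper's Lemma~\ref{lem:limit-points} accomplishes by testing on sets whose boundaries avoid the fixed points).
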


To prove Proposition \ref{prop:subset} we first collect some preliminary results.
From the ergodic theorem for infinite measures  we know that
\begin{equation}
\begin{gathered}\label{eq:prop-K}
  \lim_{n\to\infty} e_n (x) (K) 
  = \lim_{n\to\infty} \frac{1}{n} \sum_{ i =  0 }^{ n - 1 } 1_{K} \circ f^i (x) = 0\\
  \text{ for every compact set } K \text{ with } K \cap \{ \xi_1, \ldots, \xi_d \} = \emptyset,
  \end{gathered}
\end{equation}
holds for Lebesgue-almost every%
\footnote{The ergodic theorem  (see \cite[Exercise 2.2.1]{Aar:1997}) here tells us that $e_n(x)(A) \to 0$  for $\mu$ almost every $x$ and for all $A$ with $\mu (A) < +\infty$. The fact that any compact set which does not contain any of the neutral fixed points has finite measure is an easy consequence of Remark~\ref{rem:accum} and~\eqref{eq:def-mu}.} $x$.
Recall that $e_n(x)(K)$ is just the proportion of time  up to time $n$ which the orbit of $x$ spends in $K$. So, if~\eqref{eq:prop-K} holds for some $x$, then its orbit must spend proportion $1$ of its time near the fixed points. This, in turn, suggests $\cV(x) \subset \pi\cS$. Indeed, we get the following result.

\begin{lemma}
  \label{lem:limit-points}
  For any $x\in X$, $\cV(x) \subset \pi\cS$ and \eqref{eq:prop-K} are equivalent.
\end{lemma}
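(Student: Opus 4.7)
Both implications are essentially an application of the Portmanteau theorem (upper semicontinuity of the measure of closed sets under weak-$\ast$ convergence), combined with the observation that $\pi\cS$ consists exactly of the probability measures supported on $\{\xi_1,\ldots,\xi_d\}$.

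\textbf{Forward direction (}\eqref{eq:prop-K} \emph{implies} $\cV(x)\subset \pi\cS$\textbf{).} Fix $\nu\in\cV(x)$ and a subsequence $n_k\to\infty$ with $e_{n_k}(x)\to \nu$ in the weak-$\ast$ topology. For any compact $K\subset X$ disjoint from $\{\xi_1,\ldots,\xi_d\}$, Portmanteau gives $\nu(K)\le \limsup_k e_{n_k}(x)(K)$, and the right-hand side is $0$ by \eqref{eq:prop-K}. Exhausting $X\setminus\{\xi_1,\ldots,\xi_d\}$ by such compact sets (for instance the complements of small open balls around the fixed points) yields $\nu(X\setminus\{\xi_1,\ldots,\xi_d\})=0$, so $\nu=\sum_{j=1}^d p_j\delta_{\xi_j}$ for some $p_j\ge 0$; since $\nu$ is a probability measure, $\p\in\cS$ and $\nu=\nu_{\p}\in\pi\cS$.

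\textbf{Converse (}$\cV(x)\subset\pi\cS$ \emph{implies} \eqref{eq:prop-K}\textbf{).} Argue by contradiction. Suppose there exists a compact $K$ disjoint from $\{\xi_1,\ldots,\xi_d\}$ and $\eps>0$ such that $e_{n_k}(x)(K)\ge\eps$ along some subsequence $n_k\to\infty$. Since $X$ is compact, $(e_{n_k}(x))_k$ has a weak-$\ast$ convergent subsubsequence with limit $\nu\in\cV(x)$. By Portmanteau applied to the closed set $K$,
\[
  \nu(K)\ \ge\ \limsup_{k\to\infty} e_{n_k}(x)(K)\ \ge\ \eps>0.
\]
But $\nu\in\cV(x)\subset\pi\cS$ is concentrated on $\{\xi_1,\ldots,\xi_d\}$, which is disjoint from $K$, so $\nu(K)=0$, a contradiction. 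Therefore $e_n(x)(K)\to 0$ as required.

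\textbf{Expected difficulties.} There is essentially no obstacle here: once one identifies $\pi\cS$ as the set of probability measures supported on the finitely many fixed points, everything reduces to Portmanteau. The only minor subtlety is making sure that $X\setminus\{\xi_1,\ldots,\xi_d\}$ can be exhausted by compact sets avoiding the fixed points, but this is immediate since $X$ is either $\mathbb{S}^1$ or $[0,1]$ and the set of fixed points is finite.
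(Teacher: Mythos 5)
Your converse direction is correct: for a closed set $K$, the Portmanteau theorem gives $\limsup_k e_{n_k}(x)(K)\le\nu(K)$, which is exactly the inequality you use there, and the compactness-plus-contradiction argument goes through.

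The forward direction, however, invokes Portmanteau with the inequality reversed. For a closed (compact) set $K$ the theorem gives $\limsup_k e_{n_k}(x)(K)\le \nu(K)$, not $\nu(K)\le\limsup_k e_{n_k}(x)(K)$; the inequality you wrote is false in general (consider $\delta_{1/k}\to\delta_0$ on $[0,1]$ and $K=\{0\}$). So, as written, \eqref{eq:prop-K} does not directly yield $\nu(K)=0$. The step is easily repaired: given a compact $K$ disjoint from $\{\xi_1,\ldots,\xi_d\}$, choose an open set $U\supset K$ whose closure is still compact and disjoint from the fixed points (possible since $K$ has positive distance to the finite set of fixed points). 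The open-set half of Portmanteau gives $\nu(U)\le\liminf_k e_{n_k}(x)(U)\le\liminf_k e_{n_k}(x)(\overline{U})=0$ by \eqref{eq:prop-K} applied to $\overline{U}$, hence $\nu(K)=0$, and exhausting $X\setminus\{\xi_1,\ldots,\xi_d\}$ by such sets yields $\nu\in\pi\cS$. (Equivalently one can use the continuity-set form of Portmanteau with sets whose boundaries avoid the fixed points, which is how the paper's one-line proof is phrased.) With this correction your argument is essentially the same Portmanteau-based proof as the paper's.
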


\begin{proof}
  The Portmanteau theorem tells us that $\cV(x) \subset \pi\cS$ if and only if the only limit point of $(e_n(x)(A))_n$ is $0$ whenever $A\subset X$ is
  such that $ \partial A \cap \{ \xi_1 , \ldots, \xi_d\} = \emptyset$. The result quickly follows.
\end{proof}

From our discussion above, it is now clear that $\cV(x) \subset \pi \cS$ for Lebesgue-almost every $x$. (Indeed, note that by \cite{CoaMelTal:2024} the reverse inclusion $\pi\mathcal{S}\subset \mathcal{V}(x)$ also holds for Lebesgue-typical $x$).
We are interested though in the behaviour of non-typical points. Moreover, we want to encode condition~\eqref{eq:prop-K} using the symbolic dynamics of the induced map.  
For that, we introduce the condition
\begin{equation}\label{eq:birk-tau}
  \lim_{ k \to \infty } \frac{ \tau_k (x) }{ k } = +\infty.
\end{equation}

\begin{lemma}\label{lem:31-32}
  For any $x \in X$, conditions~\eqref{eq:prop-K} and~\eqref{eq:birk-tau} are equivalent.	
\end{lemma}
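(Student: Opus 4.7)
The strategy is to analyze the orbit of $x$ through its excursions away from $Y$: between successive returns $\tau_{j-1}(x) < \tau_j(x)$ (of length $\ell = \tau(F^{j-1}x)$), Assumption~\ref{asm:dyn-sep}\eqref{itm:dyn-sep} forces the orbit to pass through $J_{\ell-1}, J_{\ell-2}, \ldots, J_1$ in order. Writing $k(n) \eqdef \max\{k : \tau_k(x) \le n\}$ for the number of returns in $[0,n]$, my plan is to relate both \eqref{eq:prop-K} and \eqref{eq:birk-tau} to the vanishing of $k(n)/n$. I assume the orbit of $x$ enters $Y$ infinitely often so that $\tau_k(x)$ is defined for all $k$; the contrary case (orbit eventually trapped near a fixed point) is degenerate and can be handled separately.

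For \eqref{eq:birk-tau} $\Rightarrow$ \eqref{eq:prop-K}, given a compact $K \subset X$ with $K \cap \{\xi_1,\ldots,\xi_d\} = \emptyset$, Remark~\ref{rem:accum} provides $N$ such that $K \cap J_n = \emptyset$ for all $n > N$, so $K \subset K_N \eqdef Y \cup J_1 \cup \cdots \cup J_N$. Each complete excursion contributes at most $N+1$ visits to $K_N$ (one to $Y$ at its starting point, plus $\min(\ell-1,N) \le N$ among $J_1, \ldots, J_N$), which gives $e_n(x)(K_N) \le (N+1)(k(n)+1)/n$. From $\tau_{k(n)}(x) \le n$ and \eqref{eq:birk-tau}, I then obtain $k(n)/n \le k(n)/\tau_{k(n)}(x) \to 0$, whence $e_n(x)(K) \le e_n(x)(K_N) \to 0$.

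For \eqref{eq:prop-K} $\Rightarrow$ \eqref{eq:birk-tau}, I argue by contradiction: suppose $\tau_{k_i}(x)/k_i \le M$ along a subsequence $k_i \to \infty$. Then the orbit makes at least $k_i$ visits to $Y$ in $[0,\tau_{k_i}(x)]$, so $e_{\tau_{k_i}}(x)(\overline Y) \ge k_i/\tau_{k_i}(x) \ge 1/M$. But $\overline Y$ is compact and disjoint from the fixed points: by Assumption~\ref{asm:dyn-sep} each $\xi_j$ lies in the open set $X_j \subset X \setminus Y$, so some neighborhood of $\xi_j$ is disjoint from $Y$, forcing $\xi_j \notin \overline Y$. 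Applying \eqref{eq:prop-K} with $K = \overline Y$ gives $e_n(x)(\overline Y) \to 0$, contradicting the subsequence bound.

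The main delicacy lies in the first direction: reducing an arbitrary compact $K$ to a finite union $K_N$ via Remark~\ref{rem:accum}, and tracking the per-excursion count $N+1$ cleanly (with partial initial/terminal excursions contributing only $O(N+1)$, which is absorbed in the asymptotics). The reverse direction is essentially the single observation that openness of the sets $X_j$ keeps the fixed points off $\overline Y$, converting a lower bound on $Y$-visits into the required contradiction.
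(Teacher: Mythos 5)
Your proof is correct and follows essentially the same route as the paper: one direction by using Remark~\ref{rem:accum} to bound the number of visits to $K$ per excursion between returns to $Y$ and comparing with $k(n)/\tau_{k(n)}(x)$, the other by applying~\eqref{eq:prop-K} to (the closure of) $Y$ via the observation that the frequency of visits to $Y$ up to time $\tau_k(x)$ is comparable to $k/\tau_k(x)$ (the paper states this as the identity $k/\tau_k(x)=e_{\tau_k(x)}(x)(Y)$ rather than arguing by contradiction). The only cosmetic differences are your contrapositive phrasing and your use of $\overline Y$, which makes the compactness point explicit.
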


\begin{proof}
Assume \eqref{eq:prop-K}. Notice that
\[
  \frac{ k }{ \tau_k (x) } = e_{ \tau_k (x) } (x) (Y).
\]
Hence, \eqref{eq:birk-tau} follows immediately from \eqref{eq:prop-K} by taking $K = Y$. 

Now assume \eqref{eq:birk-tau}. As $K \subset X$ is a compact set which does not contain any of the neutral fixed points we know from Remark~\ref{rem:accum} that $\sup_{ K \setminus Y } \tau < \infty$.
  Setting 
  \[
  	N \eqdef \sup_{ x \in K \setminus Y } \tau (x),
\]	 
that number of entrances to $K$ between successive returns to $Y$ is at most $N$, that is,
  \[
    \sup_{ k \in\bN } \sum_{ i = \tau_{ k - 1 }(x) + 1 }^{ \tau_{ k } (x) - 1 } \mathbbm{1}_{ K } \circ f^{ i } (x) \le \sup_{ x \in K \setminus Y } \tau (x) = N < \infty.
  \]
  For each $n \in \N$ choose $k_{ n } \in \N$ such that
\[
    \tau_{ k_{ n } } (x) \le n < \tau_{ k_{ n }  + 1 } (x)
    .
\]
  By the definition of $k_{ n }$ and $N$ we have
  \[
    e_{ n }(x) (K) = \frac{1}{n} \sum_{ i = 0 }^{  n - 1 } \mathbbm{1}_{ K } \circ f
    ^{ i } (x) \le \frac{ k_{ n } N + N }{ \tau_{ k_{ n } } (x) } \to 0
  \]
  by our assumption on $x$. This proves \eqref{eq:prop-K}.
\end{proof}

\begin{proof}[Proof of Proposition~\ref{prop:subset}]
By Lemmas~\ref{lem:limit-points} and~\ref{lem:31-32}
it is enough to show $\cT(x) \subset \cS$ implies that~\eqref{eq:birk-tau}. We first claim that
  \begin{equation}\label{eq:lim-Sk/tauk}
    \lim_{k \to \infty} \frac{ \sum_{ j = 1 }^{d} \tau_k^{(j)}(x) }{ \tau_k (x)} = 1
    .
  \end{equation}
  From~\eqref{eq:dyn-sep} we know that $\sum_{ j = 1 }^{d} \tau_k^{(j)}(x) /  \tau_k (x) \le d$. So the sequence in the limit in \eqref{eq:lim-Sk/tauk} is bounded and hence we can choose a subsequence $(k_\ell)_\ell$ so that 
\[
  \lim_{\ell \to \infty }\frac{\sum_{ j = 1 }^{d} \tau_{ k_\ell }^{(j)}(x)}{  \tau_{ k_\ell } (x) }= c
 \]  
 for some $c \in [0,d]$. By the compactness of $[0,1]^d$, up to passing to a further subsequence, we can assume that $\lim_{\ell \to \infty } ( \vrt_{ k_\ell }/ \tau_{k_\ell } ) (x) = (p_1,\ldots,p_d)$. But as all the limit points $\cT (x)$ of $((\vrt_k/\tau_k)(x))_k$  are contained in $\cS$, we find that
  \[
    c = p_1 + \cdots + p_d = 1,
  \]
  which proves~\eqref{eq:lim-Sk/tauk}.

  Now, using~\eqref{eq:dyn-sep}, we see that
  \[
    \frac{ \sum_{ j = 1 }^{d} \tau_k^{(j)}(x) }{ \tau_k (x) } =
    \frac{ \tau_k (x) - k  }{ \tau_k(x) } = 1 - \frac{k}{ \tau_k (x)}.
  \]
  Thus,~\eqref{eq:lim-Sk/tauk} yields~\eqref{eq:birk-tau}. This proves the proposition.
\end{proof}

Define
\[
\widetilde \cT (x) \eqdef
\big\{ \text{limit points of } \big((e_n(x)(X_1),\ldots,e_n(x)(X_d))\big)_n \big\}
\subset [0,1]^d.
\]

\begin{lemma}\label{lem:tilde-T}
  If $\cV(x) \subset \pi\cS$, then $\cV(x) = \pi( \widetilde\cT(x) )$.
\end{lemma}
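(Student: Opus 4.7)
The plan is to prove the two inclusions $\cV(x) \subset \pi(\widetilde\cT(x))$ and $\pi(\widetilde\cT(x))\subset \cV(x)$ separately, via the Portmanteau characterization of weak$*$ convergence applied to the open sets $X_1,\ldots,X_d$. Two geometric facts will be used throughout. By Assumption~\ref{asm:dyn-sep}(1), each $X_j$ is open and contains exactly one fixed point $\xi_j$; since the $X_j$'s are pairwise disjoint (a nonempty open subset of $\bR$ has positive Lebesgue measure, so sets that are open and disjoint mod $\Leb$ are truly disjoint), one has $\xi_i\notin X_j$ for $i\neq j$, and therefore $\nu_\p(X_j)=p_j$ for every $\p\in\cS$. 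Moreover, disjointness gives the global upper bound $\sum_j e_n(x)(X_j)\le 1$.

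For the forward inclusion, I fix $\nu\in \cV(x)$; by hypothesis $\nu=\nu_\p$ for some $\p\in\cS$, realized by a subsequence with $e_{n_\ell}(x)\to \nu_\p$. Portmanteau on open sets yields $\liminf_\ell e_{n_\ell}(x)(X_j)\ge p_j$ for each $j$. Combining these lower bounds with the upper bound $\sum_j e_{n_\ell}(x)(X_j)\le 1=\sum_j p_j$, passing to a sub-subsequence along which each coordinate converges in the compact set $[0,1]^d$ forces $\lim_\ell e_{n_\ell}(x)(X_j)=p_j$ for every $j$. As every convergent sub-subsequence of the coordinate vector must share this limit, the whole coordinate sequence along $(n_\ell)$ converges to $\p$, placing $\p\in\widetilde\cT(x)$.

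For the reverse inclusion, let $\p\in\widetilde\cT(x)$ be witnessed by a subsequence $(n_\ell)$ along which the coordinate vector converges to $\p$. By weak$*$ compactness of probability measures on $X$, I extract a further subsequence $(n_{\ell_k})$ along which $e_{n_{\ell_k}}(x)$ converges weakly to some invariant $\nu$. Then $\nu\in\cV(x)\subset\pi\cS$, say $\nu=\nu_\q$. The forward-direction argument applied to $(n_{\ell_k})$ identifies $\lim_k e_{n_{\ell_k}}(x)(X_j)=q_j$, while by choice of $(n_\ell)$ this same limit equals $p_j$. Hence $\q=\p$ and $\nu_\p=\pi(\p)\in\cV(x)$.

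I do not foresee a serious obstacle. The only delicate point is extracting exact convergence $e_{n_\ell}(x)(X_j)\to p_j$ from the one-sided Portmanteau bounds; this succeeds precisely because the open sets $X_j$ are disjoint so that the mass constraint $\sum_j e_{n_\ell}(x)(X_j)\le 1$ saturates exactly when every lower bound is tight.
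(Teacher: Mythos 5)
Your proof is correct and follows essentially the same route as the paper: both inclusions are obtained by applying the Portmanteau theorem to the sets $X_j$, with a weak$\ast$ compactness and sub-subsequence argument handling the reverse inclusion. The only difference is cosmetic: the paper uses the continuity-set form of Portmanteau (since $\nu_{\p}(\partial X_j)=0$ for every $\p\in\cS$) to get $e_{n_k}(x)(X_j)\to p_j$ directly, whereas you use the open-set liminf bound together with saturation of $\sum_j e_n(x)(X_j)\le 1=\sum_j p_j$; both are valid.
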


\begin{proof}
  Suppose that $\cV(x) \subset \pi\cS$.

As the observable $(\mathbbm{1}_{X_1}, \ldots, \mathbbm{1}_{X_d})$ is continuous at the continuity points of any measure in $\pi\cS$, the Portmanteau theorem tells us that if $e_{n_k}(x) \to \nu_\p$ for a subsequence $(n_k)_k$ and some $\p \in \cS$, then 
  \begin{equation}\label{eq:portman}
  \begin{aligned}
    \p 
    = \int (\mathbbm{1}_{X_1}, \ldots, \mathbbm{1}_{X_{d}} ) \, d\nu_{\p}
    &= \lim_{ k \to \infty }\int (\mathbbm{1}_{X_1}, \ldots, \mathbbm{1}_{X_{d}} ) \, d e_{n_k}(x)\\
    &= (e_{n_{k}}(x) (X_1), \ldots, e_{n_k}(x)(X_d)).
  \end{aligned}
  \end{equation}
  Thus,  $\cV(x) \subset \pi \widetilde \cT(x)$.
  
  Now suppose that $\lim_{k\to\infty}(e_{n_{k}}(x) (X_1), \ldots, e_{n_k}(x)(X_d)) = \p$ for a subsequence $(n_k)_k$. Reasoning as in~\eqref{eq:portman} we see that the only possible limit point of $(e_{n_k}(x))_k$ is $\nu_\p$. So, by compactness,
  $\lim_{k \to \infty }e_{n_k}(x) = \nu_\p$ which yields $\widetilde{\cT}(x) \subset \pi^{-1}\cV(x)$ and concludes the proof.
\end{proof}

\begin{proof}[Proof of Theorem~\ref{thm:coding}]
  Suppose that $\cT(x)$ is a closed connected subset of $\cS$.
  By Proposition~\ref{prop:subset}, $\cV(x) \subset \pi\cS$. Thus, from Lemma~\ref{lem:tilde-T}, it is enough to show that $\widetilde \cT(x) = \cT(x)$.  As
  \[
    \frac{ \vrt_k }{ \tau_k }  (x) = ( e_{\tau_k}(x) (X_1), \ldots, e_{\tau_k}(x) (X_d)),
  \] 
  it is clear that $\cT(x) \subset \widetilde{\cT}(x)$.
  
  By Assumption~\ref{asm:dyn-sep} item~\eqref{itm:dyn-sep} the orbit of $x$ spends all of its time in at most one of the sets $X_{ 1 }, \ldots, X_{ d }$ between any two successive entrances to $Y$.  Thus, for any given $j = 1,\ldots, d$, $k \in\bN$ and $\tau_k(x) \le \ell < \tau_{k + 1} (x)$, one has for $n = \tau_k(x) + \ell$ that either
  \[
    e_n (x)(X_j) =
    \frac{ \tau_{ k }^{ (j) } (x) + \ell}{ \tau_{ k } (x) + \ell },\quad \text{or} \quad
    e_n (x)(X_j) = \frac{ \tau_{ k }^{ (j) } (x) }{ \tau_{ k } (x) + \ell }
    ,
  \]
  and the former is increasing\footnote{Notice that the derivative of $t \mapsto \frac{ a + t }{ b + t }$ is $\frac{ b - a }{ ( b + t )^{ 2 }}$ which is non-negative whenever $b \ge a$.} in $\ell$ and the latter is decreasing in $\ell$. Thus, the sequence $(e_n(X_j))_n$ is strictly monotonic on each time interval $[\tau_{k}(x), \tau_{k+1}(x)] \cap \N$.
  This tells us that 
  \begin{equation}\label{eq:tau-ineq}
  	 \min\left\{\frac{\tau_k^{(j)}(x)}{\tau_k(x)},\frac{\tau_{k+1}^{(j)}(x)}{\tau_{k+1}(x)}\right\}
  	\le e_n(x)(X_j)
  	\le \max\left\{\frac{\tau_k^{(j)}(x)}{\tau_k(x)},\frac{\tau_{k+1}^{(j)}(x)}{\tau_{k+1}(x)}\right\}.
  \end{equation}

  Suppose that \eqref{eqcondtaus} is satisfied and let
  \[
  	\varepsilon_k
  	\eqdef \left|\frac{\tau_k^{(j)}(x)}{\tau_k(x)}-\frac{\tau_{k+1}^{(j)}(x)}{\tau_{k+1}(x)}\right|.
  \]
  Hence, as $\cT(x)$ are the limit points of $(\tau_k^{(j)}(x)/\tau_k(x))_k$, for all $k$ sufficiently large we get
  \[
  	\frac{\tau_k^{(j)}(x)}{\tau_k(x)}\in B_{2\varepsilon_k}(\cT(x)).
  \]
  Thus, we get that 
  \[
  	((e_n(x)(X_1),\ldots,e_n(x)(X_d))
  	\in B_{3\varepsilon_k}(\cT(x)),
  \]
  which implies that $\widetilde\cT(x)\subset\cT(x)$.
  This concludes the proof.
\end{proof}

\section{Auxiliary results on repellers and return times}\label{sec4}

In this section, we collect some auxiliary results on ``sub-repellers'' for some (high enough) iterate $F^n$ of our initial map $F$.

\subsection{Cylinders and associated maximal invariant sets}

Given $n\in\bN$, we let
\[
	\cQ^{ n } \eqdef \bigvee_{ i = 0 }^{  n - 1 } F^{ -i } \cQ
\]	 
denote the refinement of $\cQ$ under $F$ into $n$-cylinders. Write 
\[
	\cQ^{ * } \eqdef \{ a \in \cQ^{ n } \colon n \in \N_0 \}
\]	 
for the collection of all cylinders.
Given $a \in \cQ^{n}$ and $b\in \cQ^{*}$ we  write $ab \eqdef a \cap F^{-n} b$.

We denote by $|a|$ the length of the interval $a\in\cQ^\ast$.

Unless there is a risk of misunderstanding, given a collection of $n$-cylinders $\cA \subset \cQ^n$, we use the symbol $\cA$ also to denote the union of all partition elements which $\cA$ contains. For $\cA \subset \cQ^n$ consider the associated maximal invariant set 
\begin{equation}\label{eq:def:max-inv-set}
  \Lambda ( \cA ) \coloneqq \{ x \colon F^{kn} (x) \in \cA \text{ for all } k \in\bN_0  \}.
\end{equation}
Given any set $A \subset X$ we denote by $\cQ_{A}^\ell$ the set of $\ell$-cyclinders whose interiors intersect $A$,
\[
  \cQ^{\ell}_A \eqdef
  \big\{ a \in \cQ^\ell \colon \intr(a) \cap A \neq \emptyset \big\}
  .
\]
\begin{remark}\label{rem:ell-cover-lambda}
  Notice that if $\cA \subset \cQ^n$ then for every $k\in\bN_0$ and $s\in\{0,\ldots, n - 1\}$,
  \[
    \cQ_{\Lambda(\cA)}^{kn + s}
    = \big\{ a_1 \ldots a_k b \colon 
    a_1, \ldots, a_k \in \cA,
    \text{ and } b \in \cQ^s_{\cA}  \big\}
    .
  \]
\end{remark}

\subsection{Geometric properties of repellers formed from maximal invariant sets}\label{secprelimrepeller}

Given a finite collection $\cA \subset \cQ^n$, we define the \emph{virtual dimension} $\vdim \Lambda ( \cA ) $ of its maximal invariant set $\Lambda ( \cA )$ defined in~\eqref{eq:def:max-inv-set} by setting
\begin{equation}\label{defvDim}
	\text{ $\vdim\Lambda(\cA)$ is the unique $s > 0$ satisfying $\sum_{a\in\cA}|a|^s = 1$}
  .
\end{equation}
By Assumption~\ref{asm:gm}, the map $F$ is a $C^{ 2 }$ uniformly expanding Markov map with bounded distortion. Hence, there exist a $\lambda > 1$ and a constant $D > 0$ such that for every $n\in\bN$,
\begin{equation}\label{eq:distotion-expansion}
  	\inf_{x\in Y} \left|(F^n)'(x)\right| \ge \lambda^n
	\quad\text{ and }\quad
	\sup_{a\in\cQ^n}\sup_{ x,y\in a } \log \frac{| (F^n)' (x)|}{| (F^n)' (y) |} \le D.
\end{equation}
In particular, map $g = F^n|_{\Lambda (\cA )}$ is an \emph{expanding repeller} in the sense of~\cite{PrzUrb:10}. That is, equipping $Y$ with the topology induced by the separation time distance, $\Lambda (\cA)$ is a \emph{repeller} for $g$:
\begin{itemize}
  \item $g(\Lambda (\cA)) = \Lambda ( \cA )$;
  \item $g|_{\cA}$ is continuous;
  \item $\cA \subset Y$ is open and $\Lambda ( \cA ) = \bigcap_{ k = 0 }^{ \infty } g^{-k} \cA$;
  \item and, $g$ is \emph{expanding}: $|(g^k)'(x)| \ge \lambda^{nk}$ for all $x\in \Lambda ( \cA)$.
\end{itemize}

\begin{remark}\label{rem:dist-exp}
  For further reference, let us also remark that~\eqref{eq:distotion-expansion} implies that,
  \begin{equation}\label{smallLeb}
    D^{-1}\lambda^{-n}
    \le|a|\le \lambda^{-n}
    \quad\text{ for all }\quad n\in \N \text{ and }a\in\cQ^n,
  \end{equation}
  and
  \[
  	D^{-1} |a||b| < |ab| < D |a||b|
    \quad
    \text{whenever $a,b,ab \in \cQ^*$.}
  \]
\end{remark}

From \cite[proof of Theorem 3, Section 4.2]{PalTak:93} we then obtain.

\begin{lemma}\label{lem:approxvdim}
  Let $\cA \subset \cQ^n$ be a finite subcollection. If $F^n\colon \Lambda ( \cA ) \to \Lambda ( \cA )$ is topologically mixing then
  \[
    \big\lvert \dim_{\rm H}\Lambda(\cA)-\vdim_{\rm H}\Lambda(\cA)\big\rvert
    \le \frac{D}{n\log\lambda-D}
    ,
  \]
  where $D > 0$ and $\lambda > 1$ are as in~\eqref{eq:distotion-expansion}.
\end{lemma}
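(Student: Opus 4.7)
Set $s := \vdim\Lambda(\cA)$, so that $\sum_{a\in\cA}|a|^s = 1$, and let $g := F^n$. Then $\Lambda(\cA) = \bigcap_{k\ge 0}g^{-k}\cA$ is a $g$-invariant, topologically mixing expanding repeller in the sense recalled in Section~\ref{secprelimrepeller}. The plan is to establish matching upper and lower bounds $\dim_{\rm H}\Lambda(\cA)\in[s - D/(n\log\lambda-D),\,s+D/(n\log\lambda-D)]$, following the classical cookie-cutter argument of Palis--Takens: a cover by $(kn)$-cylinders for the upper estimate, and a mass distribution argument for the lower estimate. Both rest on a careful use of bounded distortion to compare the length of a $(kn)$-cylinder $c = a_1\cdots a_k$ with the product $\prod_i |a_i|$ of the lengths of its constituent $n$-cylinders in $\cA$.

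\textbf{Upper bound.} By Remark~\ref{rem:ell-cover-lambda}, the $(kn)$-cylinders contained in $\Lambda(\cA)$ are exactly $\{a_1\cdots a_k : a_i\in\cA\}$, each of diameter at most $\lambda^{-kn}$ by~\eqref{smallLeb}. Using the distortion bound of $F^{kn}$ on a $(kn)$-cylinder from~\eqref{eq:distotion-expansion}, I obtain a multiplicative comparison $|a_1\cdots a_k|\le C\prod_i|a_i|$ with $C = C(D)$ depending only on $D$. Together with $|a_i|\le\lambda^{-n}$, for any $t>s$,
\[
\sum_{a_1,\ldots,a_k\in\cA}|a_1\cdots a_k|^t
\ \le\ C^t\,\lambda^{-(t-s)kn}\Bigl(\sum_{a\in\cA}|a|^s\Bigr)^{k}
\ =\ C^t\,\lambda^{-(t-s)kn},
\]
which vanishes as $k\to\infty$ whenever $(t-s)n\log\lambda > \log C = O(D)$. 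Balancing the two quantities produces $\dim_{\rm H}\Lambda(\cA)\le s+D/(n\log\lambda-D)$.

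\textbf{Lower bound and main obstacle.} Construct a measure $\mu$ on $\Lambda(\cA)$ assigning mass $\mu(c) = \prod_i|a_i|^s$ to every $(kn)$-cylinder $c = a_1\cdots a_k$ (consistent since $\sum_{a\in\cA}|a|^s = 1$, and with full support on $\Lambda(\cA)$ by topological mixing of $g$). Bounded distortion in the reverse direction gives $\mu(c)\le C'|c|^s$, and since by~\eqref{smallLeb} and the Markov structure each ball of radius $r\sim\lambda^{-kn}$ meets $O(1)$ such cylinders, $\mu(B(x,r))\le C''r^s$; the mass distribution principle then yields $\dim_{\rm H}\Lambda(\cA)\ge s - D/(n\log\lambda-D)$ after absorbing the distortion into an effective scale. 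The central difficulty is that the distortion of $F^{kn}$ on any $(kn)$-cylinder is bounded by the same $D$ as in~\eqref{eq:distotion-expansion} --- \emph{not} by $kD$ --- which is a standard consequence of the $d_\theta$-Lipschitz hypothesis on $\log(dm/dm\circ F)$ together with uniform expansion, by which the distortion contributions accumulated through the chain rule are damped geometrically by $\lambda^{-n}$. Without this sharp form of bounded distortion, one would obtain only an $e^{O(kD)}$ comparison between $|c|$ and $\prod_i|a_i|$, and the $D/(n\log\lambda-D)$ estimate would collapse; making the accounting clean at the level of $(kn)$-cylinders is the crux of the argument.
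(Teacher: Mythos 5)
Your overall architecture (upper bound via covers by $(kn)$-cylinders, lower bound via the Bernoulli-type measure $\mu(a_1\cdots a_k)=\prod_i|a_i|^s$ and mass distribution) is exactly the classical cookie-cutter argument that the paper itself invokes by citing Palis--Takens, so the route is the intended one. However, the step you single out as the crux is wrong. It is true that \eqref{eq:distotion-expansion} bounds the distortion of $F^{kn}$ on a single $(kn)$-cylinder by $D$ uniformly in $k$, but this controls the geometry of the cylinder $c=a_1\cdots a_k$ relative to its image $F^{kn}(c)$; it does \emph{not} yield $|a_1\cdots a_k|\le C(D)\prod_i|a_i|$ with $C$ independent of $k$. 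To pass from $|(F^{kn})'(x)|=\prod_i|(F^n)'(F^{(i-1)n}x)|$ to the product of the lengths $|a_i|$ you must replace the derivative at the orbit point in $a_i$ by its average over all of $a_i$, and this costs a factor of order $e^{D}$ \emph{per block}; these costs genuinely accumulate. The correct comparison is the one in Remark~\ref{rem:dist-exp}, iterated: $|a_1\cdots a_k|\le D^{\,k-1}\prod_i|a_i|$ (and the analogous lower bound), i.e.\ an exponential-in-$k$ constant. A quick sanity check that your uniform claim cannot hold: plugging it into your own display would give $\sum_c|c|^t\to0$ for \emph{every} $t>s$, and symmetrically for the lower bound, hence $\dim_{\rm H}\Lambda(\cA)=\vdim\Lambda(\cA)$ exactly --- a statement stronger than the lemma and false for nonlinear repellers, and incompatible with the very presence of the error term $D/(n\log\lambda-D)$, which vanishes only as $n\to\infty$.

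The good news is that the exponential-in-$k$ factor is not a defect to be engineered away but precisely the source of the stated estimate: your computation goes through verbatim with $C^t$ replaced by the per-block cost. For the upper bound, $\sum_{a_1,\dots,a_k}|a_1\cdots a_k|^t\le e^{(k-1)Dt}\,\lambda^{-(t-s)nk}$ (using \eqref{smallLeb} and \eqref{defvDim}), which tends to $0$ as soon as $(t-s)\,n\log\lambda>tD$, i.e.\ for every $t>s\,n\log\lambda/(n\log\lambda-D)$; since $s\le1$ this gives $\dim_{\rm H}\Lambda(\cA)\le s+D/(n\log\lambda-D)$. Likewise, for the lower bound, $\mu(c)=\prod_i|a_i|^s\le e^{(k-1)Ds}|c|^s\le|c|^{\,s-D/(n\log\lambda)}$ because $|c|\le\lambda^{-kn}$, and the mass distribution principle (after the standard, but not free, passage from cylinders to balls, which you only gesture at) gives the matching lower estimate. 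So your last paragraph has the logic reversed: the $e^{O(kD)}$ comparison is what one actually has, and it is exactly what produces $D/(n\log\lambda-D)$ by trading the per-block distortion cost $D$ against the per-block expansion $n\log\lambda$; the uniform-in-$k$ comparison you assert is both unnecessary and false.
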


We say that a Borel probability measure $m$ is \emph{geometric} for $F^n|_{\Lambda ( \cA)}$
 if it is ergodic and invariant for $F^n$, $m ( \Lambda ( \cA ) )= 1$, and there exists a constant $E > 0$ so that
 \begin{equation}\label{eq:def-geom}
    \dim_{ \rm H } \Lambda ( \cA ) -  \frac{ E }{ \log 2r  }
    \le
    \frac{ \log m ( B_r (x) )  }{ \log 2r }
    \le
    \dim_{ \rm H } \Lambda ( \cA )  + \frac{ E  }{ \log 2r},
 \end{equation}
 for all $x \in \Lambda( \cA )$ and all $r > 0$.
The following is provided, for example, by \cite[Theorem 9.1.6]{PrzUrb:10} and \cite[Theorem 8.1.6]{PrzUrb:10}, by taking $m$ to be the Gibbs state corresponding to the function $-d\log|(F^n)'|$ (with respect to the mixing expanding repeller $F^n|_{\Lambda(\cA)}$, with $d= \dim_{\rm H}\Lambda(\cA)$.

\begin{lemma}\label{existence-m}
  For any $\cA\subset\cQ^n$ so that $F^n|_{ \Lambda (\cA)}$ is topologically mixing, 
   there exists a corresponding geometric measure $m$.
\end{lemma}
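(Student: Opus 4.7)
The plan is to take $m$ as the equilibrium state for the geometric potential $\phi = -d\log|(F^n)'|$ on the repeller $\Lambda(\cA)$, where $d \eqdef \dim_{\rm H}\Lambda(\cA)$, and then extract the bounds \eqref{eq:def-geom} from the Gibbs property combined with the bounded distortion already recorded in~\eqref{eq:distotion-expansion}.

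First, I would set up the thermodynamic formalism. By Section~\ref{secprelimrepeller}, the map $g \eqdef F^n|_{\Lambda(\cA)}$ is a topologically mixing $C^2$ expanding repeller, and $\phi$ is H\"older continuous in the separation-time metric because the distortion bound in~\eqref{eq:distotion-expansion} is uniform across $\cQ^{kn}$. The Ruelle--Perron--Frobenius theory for such repellers (\cite[Theorem~8.1.6]{PrzUrb:10} and surrounding results) then yields a unique equilibrium state $m$ for $\phi$; this $m$ is $g$-invariant, ergodic (in fact exact), and satisfies $m(\Lambda(\cA))=1$. By Bowen's formula (\cite[Theorem~9.1.6]{PrzUrb:10}), since $d = \dim_{\rm H}\Lambda(\cA)$, we have the pressure identity $P(\phi) = 0$.

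Next, I would exploit the Gibbs property. There exists a constant $K>1$ such that for every $k\in\bN$, every $kn$-cylinder $a \in \cQ_{\Lambda(\cA)}^{kn}$, and every $x \in a \cap \Lambda(\cA)$,
\[
  K^{-1} \le \frac{m(a)}{|(F^{kn})'(x)|^{-d}} \le K.
\]
Combining this with the distortion and size bounds in Remark~\ref{rem:dist-exp}, in particular $D^{-1}\lambda^{-kn} \le |a| \le \lambda^{-kn}$, I obtain constants $K_1 > 0$ (depending on $K$, $D$, $d$) such that
\[
   K_1^{-1}\,|a|^{d} \;\le\; m(a) \;\le\; K_1\,|a|^{d}
   \quad\text{for every }a \in \cQ_{\Lambda(\cA)}^{kn}.
\]

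The main technical step is to pass from this cylinder estimate to the ball estimate in~\eqref{eq:def-geom}. Given $x \in \Lambda(\cA)$ and small $r>0$, choose $k$ with $\lambda^{-(k+1)n} \le r < \lambda^{-kn}$. Since the elements of $\cQ^{kn}$ restricted to $\Lambda(\cA)$ are intervals, the ball $B_r(x)$ contains the $kn$-cylinder through $x$ (whose length is at least $D^{-1}\lambda^{-kn} \gtrsim r$) and is covered by a uniformly bounded number of $kn$-cylinders from $\cQ_{\Lambda(\cA)}^{kn}$ — the cardinality bound depending only on $D$, $\lambda$ and $n$. Applying the previous display to each such cylinder yields $K_2^{-1} r^d \le m(B_r(x)) \le K_2 r^d$, and taking $\log$ gives~\eqref{eq:def-geom} with $E$ depending on $K_2$, $n$, $D$, $\lambda$.

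The main obstacle is the ball-to-cylinder comparison: one must verify that, for an endpoint $x$ of a cylinder, a ball $B_r(x)$ does not straddle a gap in $\Lambda(\cA)$ much larger than $r$. In the Markov, mixing, one-dimensional setting this is standard — the topological mixing of $g$ together with the finite image property guarantees a uniform ratio between the gap sizes and the cylinder sizes at each generation — but it is the step where the hypotheses on $F^n|_{\Lambda(\cA)}$ are genuinely used. Once this is in hand, the whole argument reduces to invoking~\cite[Chapters~8--9]{PrzUrb:10}.
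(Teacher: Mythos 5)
Your proposal is correct and follows essentially the same route as the paper, which simply takes $m$ to be the Gibbs state of the potential $-d\log|(F^n)'|$ (with $d=\dim_{\rm H}\Lambda(\cA)$) for the mixing expanding repeller $F^n|_{\Lambda(\cA)}$ and cites \cite[Theorems 8.1.6 and 9.1.6]{PrzUrb:10}. The extra details you supply (Bowen's formula, the Gibbs property on cylinders, and the cylinder-to-ball comparison via bounded distortion and the finite-image property) are exactly the standard steps the paper leaves to the reference, and the error allowance $E/\log 2r$ in~\eqref{eq:def-geom} absorbs the multiplicative constants you produce.
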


\subsection{Return times on cylinders}

 By Remark~\ref{rem:tau-constant-on-P}, the function $\tau$ (and hence the vector-valued function $\vrt$) is constant on the interior of each element of $\cQ$. Thus, for any $n\in\bN$ the value of $\vrt_{ n }(x)$ is completely determined by the $n$-cylinder which contains $x$. For $a\in \cQ^{ n }$, we will thus write $\vrt_{ n }(a)$ for the value of $\vrt_{ n }|_{ \intr(a) }$.

\begin{remark}\label{rem:simplerels}
  For $x,y,v,w >0$,
\begin{equation}\label{eq:abcd-clever}
 \text{ if }\quad \frac x y < \frac v w
    \quad\text{ then }\quad
    \frac{x}{y} \le \frac{ x + v }{ y + w } \le \frac{v}{w}.
\end{equation}
For $\widebar{x}, \widebar{y} \in \R^{ d }$ and $v,w \neq 0$ we have that 
  \begin{equation}\label{eq:abcd-trivial}
    \frac{ \widebar x + \widebar y }{ v + w } -
    \frac{\widebar x}{v}
    =
    \left(
        \frac{ \widebar y}{ w }
      -
        \frac{ \widebar x }{ v }
    \right)
    \frac{ w }{ v + w }.
  \end{equation}
\end{remark}

From Remark \ref{rem:simplerels} we obtain the following simple inequalities. Here, all equalities involving vectors in $\R^d$ are to be understood coordinate-wise. For $\widebar{x} \in \R^d$, let $|\widebar{x}| \coloneqq \max_{i = 1, \ldots, d } | x_i |$.

\begin{lemma}\label{lemlinalgebra}
  For every $a \in \cQ^n$ and $b \in \cQ^k$ such that $ab\in\cQ^{n+k}$,
\begin{enumerate}
\item     \label{eq:n+m-with-n}
$\displaystyle \left|
      \frac{ \vrt_{ n + k } }{ \tau_{ n + k } } (ab) 
      - \frac{ \vrt_n }{ \tau_n } (a)
    \right|
    \leq
    \frac{2 \tau_k (b) }{ \tau_{n} (a) + \tau_k (b)}$,
\item      \label{eq:n+k-with-k}
$\displaystyle\left|
      \frac{ \vrt_{ n + k } }{ \tau_{ n + k } } (ab) 
      - \frac{ \vrt_k }{ \tau_k } (b)
    \right|
    \leq
    \frac{2 \tau_n (a) }{ \tau_{n} (a) + \tau_k (b)}$,
\item        \label{eq:n+k-with-min-max}
$\displaystyle
    \min\left\{
      \frac{ \vrt_{ n } }{ \tau_{ n } } (a) ,
      \frac{ \vrt_{ k } }{ \tau_{ k } } (b) 
    \right\}
    \le
    \frac{ \vrt_{ n + k } }{ \tau_{ n + k } } (ab) 
    \leq
    \max\left\{
      \frac{ \vrt_{ n } }{ \tau_{ n } } (a) ,
      \frac{ \vrt_{ k } }{ \tau_{ k } } (b) 
    \right\}$.    
\end{enumerate}  
\end{lemma}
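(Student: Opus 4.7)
The plan is to reduce all three inequalities to elementary algebraic manipulations of the fractions listed in Remark~\ref{rem:simplerels}, after first establishing the additivity of $\vrt_{n+k}$ and $\tau_{n+k}$ along concatenations $ab$.

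First, I would note that because $\vrt$ and $\tau$ are constant on the interior of each element of $\cQ$ (Remark~\ref{rem:tau-constant-on-P}) and $F^n(\intr(ab)) = \intr(b)$, the cocycle identities $\tau_{n+k} = \tau_n + \tau_k\circ F^n$ and $\vrt_{n+k} = \vrt_n + \vrt_k\circ F^n$ give
\[
  \tau_{n+k}(ab) = \tau_n(a) + \tau_k(b),
  \qquad
  \vrt_{n+k}(ab) = \vrt_n(a) + \vrt_k(b).
\]
Next, I would observe that every coordinate of $\vrt_m/\tau_m$ lies in $[0,2]$. Indeed, since $\tau\ge 1$ one has $\tau_m\ge m$, while identity \eqref{eq:dyn-sep} gives $\tau_m^{(j)} \le \tau_m + m \le 2\tau_m$. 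In particular $\lvert(\vrt_k/\tau_k)(b) - (\vrt_n/\tau_n)(a)\rvert \le 2$ in the max norm.

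Item~(3) then follows by applying the mediant inequality \eqref{eq:abcd-clever} coordinate-wise to the pair of ratios $\tau_n^{(j)}(a)/\tau_n(a)$ and $\tau_k^{(j)}(b)/\tau_k(b)$ for each $j=1,\ldots,d$. For item~(1), I would apply identity \eqref{eq:abcd-trivial} with $(\widebar x, v) = (\vrt_n(a), \tau_n(a))$ and $(\widebar y, w) = (\vrt_k(b), \tau_k(b))$ to obtain
\[
  \frac{\vrt_{n+k}}{\tau_{n+k}}(ab) - \frac{\vrt_n}{\tau_n}(a)
  = \left(\frac{\vrt_k}{\tau_k}(b) - \frac{\vrt_n}{\tau_n}(a)\right)\frac{\tau_k(b)}{\tau_n(a)+\tau_k(b)},
\]
and then take max-norms, invoking the coordinate bound of the previous paragraph to produce the factor $2$. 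Item~(2) follows from the same identity with the roles of $(a,n)$ and $(b,k)$ swapped.

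There is essentially no technical obstacle here: once the additivity along concatenations is recognized, all three statements are formal consequences of the fraction identities in Remark~\ref{rem:simplerels}, with only the $[0,2]$ coordinate bound needing a brief justification via \eqref{eq:dyn-sep}.
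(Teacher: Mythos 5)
Your proposal is correct and follows essentially the same route as the paper: additivity of $\vrt$ and $\tau$ along the concatenation $ab$, the mediant inequality~\eqref{eq:abcd-clever} for item~(3), and identity~\eqref{eq:abcd-trivial} plus a uniform bound on the coordinates of $\vrt_m/\tau_m$ to extract the factor $2$ in items~(1) and~(2). The only cosmetic difference is that the paper gets the factor $2$ from $|\vrt_j/\tau_j|\le 1$ and the triangle inequality, while you bound each coordinate by $2$ via~\eqref{eq:dyn-sep}; both justifications yield the stated estimates.
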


\begin{proof}
  First notice that
  \[
    \frac{ \vrt_{ n + k } }{ \tau_{ n + k  } } (ab)
    = \frac{ \vrt_{ n } (a) + \vrt_{ k } (b) }{ \tau_{ n  } (a)  + \tau_{ k }(b) }
    .
  \]
  The inequality in item~\eqref{eq:n+k-with-min-max} then follows immediately from~\eqref{eq:abcd-clever}.   
  Also note that $|\vrt_j/\tau_{ j }| \le 1$. Hence, \eqref{eq:abcd-trivial} implies
\[
  \left|
	\frac{ \vrt_{ n + k } }{ \tau_{ n + k  } } (ab) -  \frac{ \vrt_n }{ \tau_n } (a)
  \right|
	=
  \left|
  \left( \frac{\vrt_{ k } (b) }{ \tau_{ k }(b)} - \frac{\vrt_{ n } (a) }{\tau_{ n  } (a) } \right) 
		\frac{ \tau_{ k }(b)}{\tau_{ n  } (a)  +  \tau_{ k }(b)}
  \right|
	\le 2\frac{ \tau_{ k }(b)}{\tau_{ n  } (a)  +  \tau_{ k }(b)}	
\]  
together with the analogous lower bound. This proves item~\eqref{eq:n+m-with-n}. The proof of item~\eqref{eq:n+k-with-k}  is analogous.
\end{proof}

\section{Approximation by repellers}\label{sec:approx}
%

Recall the big image property for $F$  from Assumption~\ref{asm:gm} stating that $\{F(a)\colon a\in\cQ\}=\cY \eqdef \{ Y_1, \ldots, Y_L \}$ for some $L\in\bN$. It hence follows that 
\begin{equation}\label{eq:def-im-part}
    \{ F^n (a) \colon a \in \cQ^n \}
    = \cY,
    \quad
    \text{for every }n \ge 1
    .
\end{equation}
In this section we will prove the following proposition.

\begin{proposition}\label{prop:approx-new}
  For every $\p \in \cS$ and every $\eps > 0$ there exist $n,N \in \N$, $\cA \subset \cQ^n$, and a Borel probability measure $m$ on $Y$ such that: 
  the images of $\cA$ under $F^n$ cover $Y$ and, in particular, $F^n ( \cA ) = \cY$;
   $m(\Lambda(\cA)) = 1$;
  and $m ( \Lambda(\cA) \cap Y_i ) > 0$ for each $i = 1, \ldots, L$.
   Moreover,
  \begin{equation}\label{eq:approx}
    \frac{ \vrt_\ell}{ \tau_\ell}  (a) \in B_\eps ( \p )
    \quand
    \frac{\log m ( a ) }{ \log |a| } \in B_{\eps} ( 1 ),
    \quad\text{for all } \ell > N \text{ and all } a \in \cQ_{\Lambda (\cA )}^\ell.
  \end{equation}
\end{proposition}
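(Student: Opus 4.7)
The plan is to construct $\cA$ by concatenating two families of $1$-cylinders of $\cQ$: a handful of \emph{excursion blocks} that force $\vrt/\tau$ close to $\p$, together with a large reservoir of short-return \emph{filler cylinders} providing the combinatorial freedom to push $\vdim\Lambda(\cA)$ close to $1$. The measure $m$ will then be the Gibbs-geometric measure provided by Lemma~\ref{existence-m}.

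For each $j$ with $p_j>0$ and a large threshold $T$, Assumption~\ref{asm:dyn-sep} together with Remark~\ref{rem:accum} produces cylinders of $\cQ$ sitting near $\xi_j$ with $\tau^{(j)}=\tau-1\in[T,2T]$. Pre- and post-composing with short Markov-compatible cylinders (using that $F$ is topologically mixing) yields, for each image pair $(i,i')\in\{1,\dots,L\}^2$, an \emph{excursion block} $e_j^{(i,i')}\in\cQ^{\ell_j}$ of uniformly bounded $F$-length (independent of $T$) whose first cylinder lies in $Y_i$, whose image under $F^{\ell_j}$ is $Y_{i'}$, and whose $\vrt$-vector equals $T\mathbf{e}_j+O(1)$, where $\mathbf{e}_j$ is the $j$-th standard basis vector of $\R^d$. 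Choose integers $K_1,\dots,K_d\ge 0$ (with $K_j>0$ iff $p_j>0$) with $|K_j/K-p_j|<\eps/4$, where $K=\sum_j K_j$. For the fillers, choose $T_0\ll T$ and let $\cF\subset\cQ$ consist of all $1$-cylinders with return time $\le T_0$. An exhaustion argument of the type used in~\cite{GelRam:09} then gives $\dim_{\rm H}\Lambda(\cF)\to 1$ as $T_0\to\infty$, so we may fix $T_0$ such that $\dim_{\rm H}\Lambda(\cF)>1-\eps/4$.

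Fix a large integer $M$ and let $n$ be the total $F$-length of a template consisting of the $K$ excursion blocks (in some fixed ordering and type distribution) interleaved with $M$ filler positions in any Markov-compatible fashion; set $\cA\subset\cQ^n$ to be the collection of all $n$-cylinders realizable by this template. Every $a\in\cA$ then satisfies $\tau_n^{(j)}(a)\in[K_jT,\,2K_jT+MT_0]$ and $\tau_n(a)\in[KT,\,2KT+MT_0]$, so choosing $MT_0\ll\eps KT$ yields $\vrt_n(a)/\tau_n(a)\in B_\eps(\p)$ for every $a\in\cA$. Because every Markov-compatible routing is admissible at the $M$ filler slots and $F$ is mixing, $F^n(\cA)=\cY$ and $F^n|_{\Lambda(\cA)}$ is topologically mixing. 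Since $\cA$ essentially consists of concatenations drawn from $\cF$ with only $K$ positions prescribed, the virtual dimension factors, by bounded distortion~\eqref{eq:distotion-expansion}, as a product of $n-K$ filler factors and $K$ excursion factors; an implicit-function-type computation yields $\vdim\Lambda(\cA)\ge\dim_{\rm H}\Lambda(\cF)-O(K/M)$. Combined with Lemma~\ref{lem:approxvdim}, this gives $\dim_{\rm H}\Lambda(\cA)>1-\eps/2$ for $M$ sufficiently large. Lemma~\ref{existence-m} then supplies the geometric measure $m$; the bound~\eqref{eq:def-geom} translates into $\log m(a)/\log|a|\in B_\eps(1)$ for cylinders in $\cQ^\ell_{\Lambda(\cA)}$ once $\ell$ is large enough.

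Finally, the ratio bound on \emph{every} $\ell$-cylinder in $\cQ^\ell_{\Lambda(\cA)}$ follows from Remark~\ref{rem:ell-cover-lambda}: writing $\ell=kn+s$ and $a=a_1\cdots a_k b$ with $a_i\in\cA$ and $b\in\cQ^s_{\cA}$, iterating Lemma~\ref{lemlinalgebra}\eqref{eq:n+k-with-min-max} gives $\vrt_{kn}(a_1\cdots a_k)/\tau_{kn}(a_1\cdots a_k)\in B_\eps(\p)$, while Lemma~\ref{lemlinalgebra}\eqref{eq:n+m-with-n} bounds the additional perturbation from the remainder $b$ by $2\tau_s(b)/\tau_{kn}(a_1\cdots a_k)=O(1/k)$, which is $<\eps$ once $k$ (hence $\ell$) is large enough. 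The main technical obstacle is the coupled asymptotic balancing of the four parameters $T,T_0,K,M$ (the ratio condition requires $MT_0\ll\eps KT$, whereas the dimension condition requires $T_0\to\infty$ followed by $M/K\to\infty$) and the justification of the comparison $\vdim\Lambda(\cA)\ge\dim_{\rm H}\Lambda(\cF)-O(K/M)$ via a Stirling-type cardinality count for $|\cA|$ together with the standard exhaustion estimate for $\Lambda(\cF)$.
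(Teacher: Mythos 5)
Your architecture (excursion blocks forcing $\vrt/\tau$ near $\p$, a reservoir of short-return fillers to boost dimension, the geometric measure from Lemma~\ref{existence-m}, and the passage from $\cA$ to all of $\cQ^\ell_{\Lambda(\cA)}$ via Lemma~\ref{lemlinalgebra}) is coherent, and your last paragraph matches the paper's Lemma~\ref{lem:large-dev-tau}. But the dimension half of the statement has a genuine gap, in two places. First, the key estimate $\vdim\Lambda(\cA)\ge\dim_{\rm H}\Lambda(\mathcal{F})-O(K/M)$ is not justified, and the computation you sketch cannot yield it: since your fillers are single cylinders of $\cQ$, factorizing an $n$-cylinder of the template into its $K+M$ constituent pieces via $D^{-1}|a||b|\le|ab|\le D|a||b|$ (Remark~\ref{rem:dist-exp}) loses a factor $D$ \emph{per slot}, i.e.\ $D^{K+M}$ in total, while lowering $s$ by $\delta$ is only guaranteed to gain a factor $\lambda^{\delta}$ per filler slot; absorbing the distortion therefore forces a defect of order $\log D/\log\lambda$ that does not vanish as $M\to\infty$, so the error is not $O(K/M)$. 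A Stirling-type count of $|\cA|$ cannot repair this, because $\vdim$ (see~\eqref{defvDim}) is determined by cylinder \emph{lengths}, not cardinality, and filler lengths are far from uniform; amortizing the distortion would require long filler blocks from $\cQ^m_{\Lambda(\mathcal{F})}$, which puts you back at estimating deep cylinder sums. Second, your starting input $\dim_{\rm H}\Lambda(\mathcal{F})\to1$ as $T_0\to\infty$ is itself a nontrivial theorem (an exhausting/pressure argument for the countable-alphabet induced system in the spirit of~\cite{GelRam:09}); it is neither available in this paper nor proved in your sketch. (A smaller, fixable imprecision: a block with $\tau^{(j)}\in[T,2T]$ only gives $\vrt=T\mathbf{e}_j+O(T)$, so you must fix one achieved value $T_j$ per fixed point and tune the multiplicities $K_j$ to the $T_j$.)

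The paper avoids both issues with one soft input: the distributional limit theorem of~\cite{Ser:2020} (Lemma~\ref{lem:uniform-lower-bound}) gives $\Leb\{x\in Y\colon (\vrt_n/\tau_n)(x)\in B_{\eps/2}(\p)\}\ge C'>0$ uniformly in $n$; since every $n$-cylinder satisfies $|a|\le\lambda^{-n}$, any collection of $n$-cylinders of Lebesgue measure bounded below automatically has $\vdim\ge1-O(1/n)$ (Lemma~\ref{lem:dim-An}), and Lemma~\ref{lem:approxvdim} converts this into $\dim_{\rm H}\Lambda(\cA)\ge1-C'/n$ with no pressure analysis or exhaustion principle. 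Mixing, $F^n(\cA)=\cY$, and $m(\Lambda(\cA)\cap Y_i)>0$ are then arranged by pre/post-composing with the finitely many connecting cylinders of Lemma~\ref{lem:k0}, which is the role your routing words play. To salvage your route you would need to either import the thermodynamic formalism behind the exhaustion claim and redo the comparison with long filler blocks, or replace the filler reservoir by the positive-Lebesgue-measure collection $\cB(n)$ as in the paper's Proposition~\ref{prop:const-An}.
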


Throughout this section, we fix $\p\in\cS$ and $\varepsilon\in(0,1)$.
In Section~\ref{sec:const-An} we construct, for each $n \in \N$ sufficiently large, a collection of $n$-cylinders $\cA(n)$.
In Sections~\ref{sec:rettime-An} and~\ref{sec:geom-An} we show, for all $\ell \in  \N$ sufficiently large, that cylinders $a \in \cQ_{\Lambda(\cA(n))}^{\ell}$ satisfy the assertion~\eqref{eq:approx} of Proposition~\ref{prop:approx-new} with respect to a certain measure $m(n)$ on $\Lambda(\cA(n))$.

\subsection{Construction of $\cA(n)$}
\label{sec:const-An}

\begin{proposition}\label{prop:const-An}
  There exists $C\in(0,1)$ so that for every $n$ sufficiently large there is a finite collection $\cA(n) \subset \cQ^{ n }$ such that
  \begin{enumerate}
    \item\label{itm:1-def-An} $\cA(n) \subset \{ a \in \cQ^n \colon \vrt_{ n } (a) / \tau_{ n } (a) \in B_{ 3\eps/4 } ( \p ) \}$;

    \item\label{item2lem:def-An-new} $\Leb  (\cA (n) \cap Y_i)  > C$, for all $i = 1, \ldots, L$;

    \item\label{item3lem:def-An} $F^{ n } \cA(n) = \cY$.

  \end{enumerate}
\end{proposition}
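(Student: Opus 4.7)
The plan is to construct $\cA(n)$ as a collection of Markov-compatible concatenations of ``pure'' building blocks, one family per fixed point $\xi_j$. For each $j \in \{1,\ldots,d\}$, a large $T$, and a small $\eta > 0$, define
\[
  \cB_j \eqdef \{ a \in \cQ : \tau^{(j)}(a) = \tau(a),\, \tau(a) \in [T, (1+\eta)T) \}.
\]
By Remark~\ref{rem:accum} these sets are nonempty for large $T$, and by Assumption~\ref{asm:dyn-sep}(\ref{itm:reg-var}) their total Lebesgue mass is of order $\gamma_j\alpha\eta T^{-\alpha}$ (reading the tail $\mu_Y(\tau^{(j)}>n)\sim\gamma_j n^{-\alpha}$). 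Crucially, every $a \in \cB_j$ satisfies $\vrt(a)/\tau(a) = \mathbf{e}_j$ exactly, where $\mathbf{e}_j$ is the $j$-th canonical basis vector of $\R^d$.

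Given $\p \in \cS$ and $n$ large, I pick integers $k_1,\ldots,k_d \ge 0$ with $\sum k_j = n$ (up to a uniformly bounded number of slots reserved for connectors) and $|k_j/n-p_j| \le \eps/(8d)$ for each $j$, and take $\cA(n)$ to be the family of all Markov-compatible $n$-cylinders $a = a_1\cdots a_n$ in which exactly $k_j$ of the $a_i$ come from $\cB_j$. Iterating Lemma~\ref{lemlinalgebra}(\ref{eq:n+k-with-min-max}), the $j$-th coordinate of $\vrt_n(a)/\tau_n(a)$ lies in the interval $[k_j/((1+\eta)\sum_{j'} k_{j'}),\, (1+\eta)k_j/\sum_{j'} k_{j'}]$, which for $\eta$ sufficiently small places $\vrt_n(a)/\tau_n(a)$ in $B_{3\eps/4}(\p)$, giving item~(1). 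To handle Markov compatibility and the image condition $F^n\cA(n) = \cY$ (item~(3)), I subdivide each $\cB_j = \bigsqcup_i \cB_j^i$ by image $Y_i$ and, where needed, insert short connectors of uniformly bounded length between blocks (guaranteed by topological mixing of $F$), steering the ultimate image to each $Y_i$. These connectors perturb $\vrt_n$ and $\tau_n$ by only a bounded amount, negligible for large $n$ and absorbed into the $3\eps/4$ margin.

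The main obstacle is item~(2), the uniform-in-$n$ lower bound $\Leb(\cA(n) \cap Y_i) > C$. Using bounded distortion~\eqref{eq:distotion-expansion}, each concatenation has measure comparable to the product of the individual $|a_i|$'s divided by the image sizes, and summing over all admissible orderings pits a multinomial coefficient $\binom{n}{k_1,\ldots,k_d}$ against the exponentially decaying per-block masses. Stirling's formula shows the exponential rate is controlled by the Shannon entropy $H(\p) = -\sum p_j \log p_j$ weighed against $\sum p_j \log(|Y|/m_j)$, where $m_j$ is the Lebesgue mass of $\cB_j$. For $\p$ in the interior of $\cS$ the strict positivity $H(\p)>0$ leaves room to tune $T$ and $\eta$ so the exponent is non-negative. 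The vertex case $\p = \mathbf{e}_j$ (zero entropy) is the subtle one, requiring either a sharper counting that exploits many cylinders within each $\cB_j^i$ (by selecting building blocks of length $\ell > 1$ so that the combinatorial freedom dominates), or a direct distributional argument showing that under Lebesgue the vector $(\vrt_n/\tau_n)$ has an asymptotic law on $\cS$ with positive mass on every open ball, vertices included. This second, stable-law-type input is where I expect the bulk of the technical work to lie.
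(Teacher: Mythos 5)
There is a genuine gap, and it sits exactly where you suspect: item~(\ref{item2lem:def-An-new}), the uniform-in-$n$ lower bound on $\Leb(\cA(n)\cap Y_i)$, which is the heart of the proposition. Your construction takes $\cA(n)$ to be concatenations of $n$ ``pure'' blocks drawn from the fixed families $\cB_j$, each of which is a collection of cylinders with return time in a window $[T,(1+\eta)T)$ and hence of some fixed Lebesgue mass $m_j<1$ (in fact small, since these are thin collections accumulating at $\xi_j$). With bounded distortion, the total mass of all admissible orderings is of order $e^{\,n\,(H(\p)+\sum_j p_j\log m_j-\log D)}$, and since $H(\p)\le\log d$ while $-\sum_j p_j\log m_j$ is large (and cannot be made small by tuning $T$ or $\eta$ without destroying the purity/window constraints that item~(\ref{itm:1-def-An}) relies on), this exponent is negative: the measure of your $\cA(n)$ decays exponentially in $n$, so no constant $C>0$ independent of $n$ can be extracted. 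Your claim that for interior $\p$ ``strict positivity $H(\p)>0$ leaves room to tune $T$ and $\eta$'' is therefore unsubstantiated, and the vertex case $\p=\mathbf{e}_j$ you explicitly defer to an unproven ``stable-law-type'' distributional statement. That deferred input is precisely the key step of the paper's proof: Lemma~\ref{lem:uniform-lower-bound} shows, via the Lamperti/Thaler-type generalized arcsine law of~\cite{Ser:2020} (checked against Assumptions~\ref{asm:m}--\ref{asm:dyn-sep}), that $x\mapsto(e_n(x)(X_1),\ldots,e_n(x)(X_d))$ converges in distribution to a law equivalent to Lebesgue, whence $\Leb\{x\colon \vrt_n(x)/\tau_n(x)\in B_{\eps/2}(\p)\}$ is bounded below uniformly in $n$, for \emph{every} $\p\in\cS$, vertices included. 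The paper then simply takes for $\cA(n)$ (a finite sub-family of) \emph{all} $n$-cylinders with ratio in $B_{\eps/2}(\p)$, sandwiched between connector cylinders of uniformly bounded length.

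The remaining parts of your proposal do match the paper's technique: the connector words supplied by topological mixing (the paper's Lemma~\ref{lem:k0}) to place each cylinder inside a prescribed $Y_i$ and force $F^n\cA(n)=\cY$, and the use of Lemma~\ref{lemlinalgebra} to absorb the resulting bounded perturbation of $\vrt_n/\tau_n$ into the $3\eps/4$ margin, are exactly how items~(\ref{itm:1-def-An}) and~(\ref{item3lem:def-An}) are handled there. But without a proof of the distributional lower bound, item~(\ref{item2lem:def-An-new}) — and with it the proposition — is not established by your argument.
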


We divide the proof of Proposition~\ref{prop:const-An} into several intermediate lemmas. 
First, we give the following uniform lower bound for the Lebesgue measure of the set of points $x$ where the ratio $(\vrt_{ n } / \tau_{ n }) (x)$ is close to $\p$.

\begin{lemma}\label{lem:uniform-lower-bound}
  There exists $C^{\prime} > 0$ such that
  \[
    \Leb \big\{x  \in Y \colon \vrt_{ n } (x) / \tau_{ n } (x) \in B_{ \eps/2 } (\p) \big\} > C^{\prime},
  \]
  for all $n$ sufficiently large.
\end{lemma}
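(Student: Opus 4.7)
The plan is to apply a multivariate stable limit theorem to the vector-valued observable $\vrt = (\tau^{(1)}, \ldots, \tau^{(d)})$ on the Gibbs-Markov system $(Y, F, \mu_Y)$, and then transfer the resulting lower bound from $\mu_Y$ to Lebesgue measure via bounded distortion. From Assumption~\ref{asm:dyn-sep}, each coordinate $\tau^{(j)}$ is regularly varying of tail index $\alpha \in (0,1)$ with constant $\gamma_j$ under $\mu_Y$, and the sets $Y^{(j)} \eqdef \{ y \in Y \colon \tau^{(j)}(y) > 0 \}$ form a partition of $Y$ modulo $\Leb$.

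The stable limit theorem for Gibbs-Markov systems (Aaronson-Denker; see also Gou\"ezel's operator renewal method) then produces the joint weak convergence under $\mu_Y$
\[
	\left(\frac{\tau^{(j)}_n}{c_n}\right)_{j=1}^d
	\xrightarrow{\mathrm{dist.}}
	\bigl(\gamma_j^{1/\alpha} S_j\bigr)_{j=1}^d,
\]
with normalizing sequence $c_n \sim n^{1/\alpha}$ and $S_1, \ldots, S_d$ independent positive $\alpha$-stable random variables. The independence reflects the disjointness of the supports $Y^{(j)}$ combined with the mixing of $F$. Using \eqref{eq:dyn-sep} and the fact that $n/c_n \to 0$ (since $\alpha < 1$), the continuous mapping theorem yields
\[
	\frac{\vrt_n}{\tau_n}
	\xrightarrow{\mathrm{dist.}}
	W \eqdef \left(\frac{\gamma_j^{1/\alpha} S_j}{\sum_i \gamma_i^{1/\alpha} S_i}\right)_{j=1}^d \in \cS.
\]

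Since each $S_j$ is absolutely continuous with strictly positive density on $(0, \infty)$ and the $S_j$ are independent, the law of $W$ is absolutely continuous on $\cS$ with density strictly positive on the relative interior. In particular $\Pr(W \in B_{\eps/2}(\p)) > 0$ for every $\p \in \cS$, and the Portmanteau theorem yields, for all sufficiently large $n$,
\[
	\mu_Y\bigl\{ y \in Y \colon \vrt_n(y)/\tau_n(y) \in B_{\eps/2}(\p) \bigr\}
	\ge \tfrac{1}{2}\Pr\bigl(W \in B_{\eps/2}(\p)\bigr) > 0.
\]
Since $\mu_Y$ is equivalent to Lebesgue on $Y$ with density bounded between positive constants (by the bounded distortion in Assumption~\ref{asm:gm}), the same uniform lower bound persists for Lebesgue, producing the required $C^{\prime} > 0$.

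The main obstacle is the precise invocation of the multivariate stable limit theorem, in particular the independence of the limit variables $S_j$. This independence reflects the disjointness of the supports of the $\tau^{(j)}$ together with the mixing of $F$; it can be established either through joint Laplace-transform convergence via operator renewal, or by applying the univariate Aaronson-Denker theorem to each linear combination $\sum_j \lambda_j \tau^{(j)}$ for $\lambda \in \R^d_{>0}$ and invoking the Cram\'er-Wold device.
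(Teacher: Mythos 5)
Your argument is essentially correct, but it reaches the conclusion by a different route than the paper. The paper does not run a stable limit theorem on the induced system itself: it invokes Sera's joint distributional limit theorem for occupation times of the original map $f$ (\cite[Corollary 4.2 and Theorem 3.3]{Ser:2020}), i.e.\ it shows that $x\mapsto (e_n(x)(X_1),\ldots,e_n(x)(X_d))$ converges in distribution (w.r.t.\ $\Leb$) to a random vector $\boldsymbol Z$ whose law is \emph{equivalent to Lebesgue} on the simplex, and the work consists in checking Sera's Assumptions 2.1--2.3 from Assumptions~\ref{asm:gm}--\ref{asm:dyn-sep} (the mixing hypothesis via exponential continued-fraction mixing of $F$). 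Your route instead attacks the quantity $\vrt_n/\tau_n$ in the statement directly: a multivariate Aaronson--Denker-type stable limit theorem for the Gibbs--Markov map $F$ applied to the locally constant, regularly varying vector $\vrt$, independence of the limit components coming from the disjoint supports of the $\tau^{(j)}$ (Assumption~\ref{asm:dyn-sep}\eqref{itm:dyn-sep}), continuous mapping using \eqref{eq:dyn-sep} and $n/c_n\to 0$, Portmanteau on the open ball, and transfer from $\mu_Y$ to $\Leb$ via the bounded invariant density. The two limits are the same object in disguise (the Lamperti/generalized arcsine law is exactly the normalized vector of independent one-sided $\alpha$-stables), so the probabilistic content agrees. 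What your approach buys is directness: it avoids the paper's slightly implicit passage from convergence of $(e_n(X_j))_j$ at all times $n$ to the statement about the return-time ratios $\vrt_n/\tau_n$, and it only uses the induced system. What it costs is that the multivariate statement with independent components is not off-the-shelf in the form you need, so the step you flag as the main obstacle is genuinely the crux: you must either carry out the joint Laplace-transform/operator-renewal argument or apply the univariate Aaronson--Denker theorem to each nonnegative linear combination $\sum_j\lambda_j\tau^{(j)}$ (whose tail is $\sim(\sum_j\gamma_j\lambda_j^\alpha)n^{-\alpha}$ precisely because the supports are disjoint, and for $\alpha<1$ no centering is needed) and conclude by Laplace transforms on $[0,\infty)^d$; with that done, and with the observation that $B_{\eps/2}(\p)$ always meets the relative interior of $\cS$ where the limit density is positive (needed when $\p$ is a boundary point), your proof is complete. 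The paper's choice of citing \cite{Ser:2020} simply packages this technical step.
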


\begin{proof}
  It is enough to prove that  there exists a random variable $\boldsymbol{Z}$ whose distribution $\boldsymbol Z_{ * } \PP$ on $[0,1]$ is equivalent to $\Leb$ and such that the random variables 
\[
  x \mapsto ( e_{ n } (x) ( X_{ 1 } ) , \ldots, e_{ n } (x) (X_{ d }))
\]   
converge in distribution to $\boldsymbol{Z}$ as $n\to\infty$. Indeed, if this is the case then
  \[
    \lim_{n \to \infty}\Leb \big\{x \colon \vrt_{ n } (x) / \tau_{ n } (x) \in B_{ \eps/2 } (\p) \big\}
    = \PP ( Z \in B_{\eps/2} (\p) ) > 0.
  \]
  The fact that there exists such a $\boldsymbol{Z}$ follows from~\cite[Corollary 4.2 and Theorem 3.3]{Ser:2020}, provided one can check~\cite[Assumptions 2.1--2.3]{Ser:2020}.

  Our Assumption~\ref{asm:dyn-sep} yields~\cite[Assumption 2.1]{Ser:2020}, and then Assumption~\ref{asm:dyn-sep} item (\ref{itm:reg-var}) together with \cite[Lemma 2.4]{Ser:2020} gives~\cite[Assumption 2.2]{Ser:2020}.
  Finally, as \( F \) is topologically mixing it is exponentially continued fraction mixing (see for example \cite[Section~4]{Aar:1997}) which yields \cite[Assumption 2.3]{Ser:2020} and completes the proof.
\end{proof}

\begin{lemma}\label{lem:k0}
  There is $k_0 \in\bN$ such that for every $i,j\in\{1,\ldots,L\}$,
  there exists $a_{ij}\subset Y_i$ such that $a_{ij}\in\cQ^{k_0}$ and $F^{k_0}(a_{ij})=Y_j$.
\end{lemma}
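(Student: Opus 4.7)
The plan is to combine topological mixing of $F$ (Assumption~\ref{asm:gm}) with the Markov and finite-image properties to produce the required cylinders. First, for each $j \in \{1,\ldots,L\}$, I pick some $c_j \in \cQ$ with $F(c_j) = Y_j$, which exists by the finite image property from Assumption~\ref{asm:gm}. Similarly, for each $i$, I pick some $b_i \in \cQ$ with $b_i \subset Y_i$, which exists because $Y_i$ is a union of elements of $\cQ$.

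Since $F$ is topologically mixing with respect to $\cQ$, for each pair $(i,j) \in \{1,\ldots,L\}^2$ there is an integer $N_{ij}$ such that $F^n(b_i) \cap c_j \neq \emptyset$ for every $n \geq N_{ij}$. As there are only $L^2$ such pairs, I set $k_0 \eqdef 1 + \max_{i,j} N_{ij}$, which serves as a uniform choice independent of $(i,j)$.

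The remaining step is to promote the non-empty intersection guaranteed by mixing into the structural cylinder inclusion the lemma requires. By the Markov property and~\eqref{eq:def-im-part}, each element $c \in \cQ^{k_0-1}$ is mapped bijectively by $F^{k_0-1}$ onto exactly one of $Y_1,\ldots,Y_L$, so $F^{k_0-1}(b_i)$ is a finite union of sets drawn from $\{Y_1,\ldots,Y_L\}$, each of which is itself a union of elements of $\cQ$. Since the interiors of distinct $\cQ$-elements are disjoint and $c_j \in \cQ$, the condition $c_j \cap F^{k_0-1}(b_i) \neq \emptyset$ forces $c_j \subset Y_\ell \subset F^{k_0-1}(b_i)$ for some $\ell$. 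Hence there exists a $(k_0-1)$-cylinder $c \subset b_i$ with $F^{k_0-1}(c) \supset c_j$. I then set $a_{ij} \eqdef c \cap F^{-(k_0-1)}(c_j)$, which lies in $\cQ^{k_0}$, is contained in $b_i \subset Y_i$, and satisfies $F^{k_0-1}(a_{ij}) = c_j$ (because $F^{k_0-1}|_c$ is a bijection onto $F^{k_0-1}(c)$), so that $F^{k_0}(a_{ij}) = F(c_j) = Y_j$.

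The main obstacle is really only bookkeeping: one must carefully use that $F$ is Markov with respect to $\cQ$ (so images of $n$-cylinders are unions of images $Y_\ell$) together with the fact that each $Y_\ell$ is a union of $\cQ$-elements, in order to convert the purely set-theoretic ``$F^n(b_i)$ meets $c_j$'' provided by topological mixing into a genuine containment of a $\cQ$-element. Uniformity of $k_0$ is automatic from the finiteness of $\{Y_1,\ldots,Y_L\}$.
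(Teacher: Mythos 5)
Your proof is correct, and it reaches the conclusion by a slightly different mechanism than the paper at the one delicate point, namely how to force the image to be \emph{exactly} $Y_j$ rather than some other $Y_\ell$ that happens to overlap $Y_j$ (the sets $Y_1,\ldots,Y_L$ need not be disjoint, since they are unions of $\cQ$-elements that may be shared). The paper fixes $a_i\in\cQ$ inside $Y_i$ and applies topological mixing towards $\widetilde Y_j$, the part of $Y_j$ not contained in any other element of $\cY$; a $k_0$-cylinder in $a_i$ whose image meets $\widetilde Y_j$ must then have image equal to $Y_j$, because that image is one of the $Y_\ell$'s. You instead aim the mixing at a single partition element $c_j\in\cQ$ with $F(c_j)=Y_j$ (available by the finite-images property), locate a $(k_0-1)$-cylinder $c\subset b_i$ with $F^{k_0-1}(c)\supset c_j$, and append $c_j$ as the last symbol, so that $F^{k_0}\bigl(c\cap F^{-(k_0-1)}(c_j)\bigr)=F(c_j)=Y_j$. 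Both arguments use mixing only to get a uniform time over the $L^2$ pairs and use \eqref{eq:def-im-part} to know that cylinder images lie in $\cY$; your ``append one letter'' trick costs one extra iterate but buys a cleaner identification of the image and, in particular, does not require the auxiliary set $\widetilde Y_j$ to contain a partition element (which the paper's argument implicitly needs). One shared technicality worth being aware of: since the $\cQ$-elements are closed intervals, the nonempty intersection furnished by topological mixing could in principle occur only at a common endpoint, in which case ``$c_j\cap F^{k_0-1}(b_i)\neq\emptyset$ forces $c_j\subset Y_\ell$'' would need the usual mod-$\Leb$/interior reading; the paper's own proof glosses over the same boundary issue, so this is not a gap specific to your argument.
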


\begin{proof}
  Fix $i,j \in\{ 1, \ldots , L \}$ and fix $a_i \in \cQ$ with $a_i \subset Y_i$.
  Let $\widetilde Y_j$ denote the part of $Y_j$ which is not contained in any other element of $\cY$, that is,
  $\widetilde Y_j$ is the union of partition elements $a \in \cQ \cap Y_{j}$ so that $\intr (a) \cap Y_{\ell} = \emptyset$ for all $\ell \neq j$.
  As $F$ is topologically mixing we can choose $k_{ij} \in \N$ to be such that,
  for all $n \ge k_{ij}$,
  \[
    F^n(a_i)\cap \widetilde Y_j \ne \emptyset,
  \]
  Let $k_0 \eqdef \max_{1 \le i,j \le L } k_{ij}$. Hence, there is $a_{ij} \in \cQ^{k_0} \cap a_i$ such that $F^{k_0}( a_{ij}) \cap \widetilde Y_j \neq \emptyset$. On the other hand, we know that $F^{k_0}( a_{ij}) \in \{ Y_1, \ldots , Y_L \}$, Hence,  we get $F^{k_0}(a_{ij}) = Y_j$.
\end{proof}

\begin{proof}[Proof of Proposition~\ref{prop:const-An}]
  Given $n \in \N$, let
  \[
    \cB (n) \eqdef
    \{ a \in \cQ^n \colon \vrt_{ n } (a) / \tau_{ n } (a) \in
    B_{ \eps/2 } ( \p ) \}
    ,
  \]
  and let  $\cB(n)^\prime \subset \cB(n)$ be a finite sub-collection so that $\Leb ( \cB(n)^\prime ) \ge \frac{1}{2} \Leb( \cB(n) )$.
  Thus, by Lemma~\ref{lem:uniform-lower-bound}, there exists some constant $C^\prime > 0$ so that
  \begin{equation}\label{eq:Bnprime}
    \Leb \cB(n)^\prime \ge C^\prime,\quad
    \text{ for all $n$ sufficiently large.}
  \end{equation}
  
  Let $k_0$ and $\{ a_{ij} \}_{i,j = 1}^{L}$ be as given by Lemma~\ref{lem:k0}.
  As $F$ is assumed to be a topologically mixing Gibbs-Markov map, each element of $\cY$ is a union of elements of $\cQ$ and the union of the all elements in $\cY$ equals $Y$.
  So, for any cylinder $b \in \cQ^n$ we know from~\eqref{eq:def-im-part}that there exist $p(b),s(b) \in \{ 1, \ldots, L\}$ so that $b \subset Y_{p(b)}$ and $F^n (b) = Y_{s(b)}$. Thus,  for all $i,j\in \{1,\ldots, L\}$ the cylinder $a_{i p(b)} b a_{s(b) j} \in \cQ^{n + 2 k_0}$ is always non-empty and satisfies
  \[
  a_{i p(b)} b a_{s(b)j} \subset Y_{i}, \quand 
  F^{n+2k_0}(a_{ip(b)}ba_{s(b)j}) = Y_j.
  \]
  
  To choose now our collection, given $n > 2k _0$ we set
  \[
    \cA (n) \eqdef \{ a_{i p(b)} b a_{s(b)j} \colon b \in \cB(n-2k_0)',\; i,j = 1,\ldots, L \}
    .
  \]
  By construction, $\cA(n)$ satisfies item (\ref{item3lem:def-An}) of the proposition.

  Letting $i = 1, \ldots, L$ and using Remark~\ref{rem:dist-exp},
  \begin{align*}
    \Leb ( \cA(n) \cap Y_i )
    &= \sum_{j  = 1 }^L \sum_{ b \in \cB(n - 2k_0)^\prime}
    |a_{i p(b)} b a_{s(b)j} | \\
    &\ge D^{-2} \sum_{j  = 1 }^L \sum_{ b \in \cB(n - 2k_0)^\prime}
    |a_{ip(b)}| |b| | a_{s(b)j} | \\
    &\ge
    D^{-2} \left(\min_{\ell,j = 1 ,\ldots,  L } |a_{\ell,j}|\right)^2 \Leb ( \cB( n - 2 k_0)^\prime).
  \end{align*}
  So, for all $n$ large enough that~\eqref{eq:Bnprime} holds there is a $C > 0$ such that $\Leb(\cA(n)) > C$. This concludes item (\ref{item2lem:def-An-new}).

  Now, consider an arbitrary element $a_{ip} b a_{sj} \in \cA(n)$ for some $n \ge 1$.
  From Lemma~\ref{lemlinalgebra} item (\ref{eq:n+k-with-k}) and the fact that $\tau_n \ge n$, we know that 
  \begin{equation}\label{eq:an-subset-1}
    \left|
    \frac{ \vrt_{ n } }{ \tau_{ n } }  (a_{ip} b a_{sj})-
    \frac{ \vrt_{ n - k_0 }  }{ \tau_{ n - k _0 } }(  b a_{sj} )
    \right|
    < 2
    \frac{\tau_{k_0}(a_{ip})}{\tau_{ n - k_0 }( b a_{sj})+\tau_{k_0}(a_{ip})}
    \le \frac{ 2 M }{n}
  \end{equation}
  where
  \[
  	  M \eqdef \max_{ \ell,m = 1 ,\ldots, L } \tau_{k_0}  (a_{ \ell m}).
  \]
  Similarly, by Lemma~\ref{lemlinalgebra}  item~\eqref{eq:n+m-with-n},
  \begin{equation}\label{eq:an-subset-2}
    \left|
    \frac{ \vrt_{ n  - k _0} }{ \tau_{ n - k_0} }  (b a_{sj} )-
    \frac{ \vrt_{ n - 2 k_0 }  }{ \tau_{ n - 2k_0 } }( b ) 
    \right|
    < 2\frac{\tau_{k_0}(a_{sj})}{\tau_{n - 2k_0}(b)+\tau_{k_0}(a_{sj})}
    \le \frac{ 2 M }{n - k_0}.
  \end{equation}
  So, as $b \in \cB(n - 2 k_0)^\prime\subset\cB(n)$ we have
  \begin{equation}\label{eq:b-in-Bn}
    ( \vrt_n / \tau_n ) (b)  \in B_{\eps/2} (\p).
  \end{equation}
  Thus, combining~\eqref{eq:an-subset-1},~\eqref{eq:an-subset-2} and~\eqref{eq:b-in-Bn}, we find that 
  \[
    \cA(n) \subset \{ a \in \cQ^n \colon (\vrt_n/\tau_n) (a) \in B_{3\eps/4} (\p) \}   
  \]
  provided that $n$ is sufficiently large. This yields item (\ref{itm:1-def-An}) and completes the proof.
\end{proof}

\subsection{Return time on $\cA(n)$}
\label{sec:rettime-An}

Let $n$ be large enough so that the conclusions of Proposition~\ref{prop:const-An} hold. Let $\cA(n)$ be as provided by Proposition~\ref{prop:const-An} and let $\Lambda(n) \eqdef \Lambda ( \cA (n) )$.

\begin{lemma}\label{lem:large-dev-tau}
  There exists $N_0 = N_0 (\cA(n)) \in \bN$ such that
  \begin{equation}\label{eq:large-dev-tau}
  | (\vrt_{ \ell }/ \tau_{ \ell }) (a)  - \p \,|<\eps
  \quad
  \text{for every $\ell > N_0$ and every $a \in \cQ^{\ell}_{\Lambda(n)}$}.
  \end{equation}
\end{lemma}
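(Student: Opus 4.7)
\smallskip

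\noindent\textbf{Proof plan.} The idea is to decompose every $\ell$-cylinder intersecting $\Lambda(n)$ into a concatenation of blocks from $\cA(n)$, use the coordinate-wise ``min/max'' estimate from Lemma~\ref{lemlinalgebra}(\ref{eq:n+k-with-min-max}) to propagate the constraint $\vrt_n(a_i)/\tau_n(a_i)\in B_{3\eps/4}(\p)$ through the concatenation, and then use Lemma~\ref{lemlinalgebra}(\ref{eq:n+m-with-n}) to bound the perturbation caused by a possibly non-full leftover suffix.

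\smallskip

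\noindent\textbf{Step 1 (decomposition).} Fix $\ell>n$ and write $\ell=kn+s$ with $0\le s<n$. By Remark~\ref{rem:ell-cover-lambda}, any $a\in\cQ^\ell_{\Lambda(n)}$ has the form $a=a_1\cdots a_k b$ with $a_1,\dots,a_k\in\cA(n)$ and $b\in\cQ^s_{\cA(n)}$.

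\smallskip

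\noindent\textbf{Step 2 (concatenation within $\cA(n)$).} By Proposition~\ref{prop:const-An}(\ref{itm:1-def-An}), each $a_i$ satisfies $\vrt_n(a_i)/\tau_n(a_i)\in B_{3\eps/4}(\p)$ coordinate-wise. I claim by induction on $k$ that $\vrt_{kn}(a_1\cdots a_k)/\tau_{kn}(a_1\cdots a_k)\in B_{3\eps/4}(\p)$. Indeed, assuming the claim for $k-1$, Lemma~\ref{lemlinalgebra}(\ref{eq:n+k-with-min-max}) applied to the concatenation $(a_1\cdots a_{k-1})\cdot a_k$ shows that each coordinate of $\vrt_{kn}/\tau_{kn}(a_1\cdots a_k)$ lies between the corresponding coordinates of $\vrt_{(k-1)n}/\tau_{(k-1)n}(a_1\cdots a_{k-1})$ and of $\vrt_n/\tau_n(a_k)$; both are within $3\eps/4$ of $p_j$, so the combined ratio is too.

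\smallskip

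\noindent\textbf{Step 3 (controlling the suffix).} Applying Lemma~\ref{lemlinalgebra}(\ref{eq:n+m-with-n}) to the concatenation $(a_1\cdots a_k)\cdot b$ yields
\[
  \left|\frac{\vrt_\ell}{\tau_\ell}(a)-\frac{\vrt_{kn}}{\tau_{kn}}(a_1\cdots a_k)\right|
  \le \frac{2\tau_s(b)}{\tau_{kn}(a_1\cdots a_k)+\tau_s(b)}.
\]
Since $b\in\cQ^s_{\cA(n)}$ contains some $a'\in\cA(n)$ and $\tau_s$ is constant on $\cQ^s$, we have $\tau_s(b)=\tau_s(a')\le\tau_n(a')\le M$, where $M\eqdef\max_{a\in\cA(n)}\tau_n(a)<\infty$ because $\cA(n)$ is finite. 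On the other hand $\tau_{kn}(a_1\cdots a_k)\ge kn$, so the right-hand side is at most $2M/(kn+M)$.

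\smallskip

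\noindent\textbf{Step 4 (choice of $N_0$).} Choose $N_0=N_0(\cA(n))$ so large that for every $\ell>N_0$ the integer $k=\lfloor\ell/n\rfloor$ satisfies $2M/(kn+M)<\eps/4$. Combining Steps~2 and~3 then gives $|\vrt_\ell(a)/\tau_\ell(a)-\p|<3\eps/4+\eps/4=\eps$, as required.

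\smallskip

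The only mildly delicate point is the verification that $\tau_s(b)\le M$ for the leftover suffix $b$; everything else reduces to iterating the elementary inequalities of Lemma~\ref{lemlinalgebra}, so I do not expect a real obstacle here.
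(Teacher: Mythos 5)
Your proof is correct and follows essentially the same route as the paper: an induction over full $n$-blocks using the min/max inequality of Lemma~\ref{lemlinalgebra}(\ref{eq:n+k-with-min-max}) to preserve the $B_{3\eps/4}(\p)$ constraint, followed by Lemma~\ref{lemlinalgebra}(\ref{eq:n+m-with-n}) and the bound $\tau_s(b)\le M\eqdef\max_{a\in\cA(n)}\tau_n(a)$ to absorb the leftover suffix, then choosing $N_0$ so the error is below $\eps/4$. Your Step~3 justification that the suffix $b\in\cQ^s_{\cA(n)}$ contains an element of $\cA(n)$ and hence $\tau_s(b)\le M$ is in fact spelled out more carefully than in the paper's own argument.
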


\begin{proof}
We first claim that 
\begin{equation}\label{eq:large-dev-const}
    	\Big|\frac{\vrt_{  kn  }}{ \tau_{  kn  }} (a) - \p\Big| 
    	< \frac{3\eps}{4},
      \quad\text{for every $a\in\cA(n)$},
      \text{ and every } k \ge 1.
\end{equation}
Notice that item (\ref{itm:1-def-An}) of Proposition~\ref{prop:const-An} implies~\eqref{eq:large-dev-const} for $k=1$.
Proceeding inductively on $k$, let $a \in \cQ^{(k+1)n}_{\Lambda(n)}$.
By Remark~\ref{rem:ell-cover-lambda}, we have $a = a_1 \cdots a_ka_{k+1}$, for $a_i \in \cA(n)$. It follows from Lemma~\ref{lemlinalgebra} item~\eqref{eq:n+k-with-min-max} that
  \[
  \frac{ \vrt_{(k + 1)n} } { \tau_{ ( k + 1 ) n } }(a)
  = \frac{ \vrt_{kn+n} } { \tau_{ kn+ n } }(a) 
  \le \max\Big\{ \frac{\vrt_{k n} (a_1 \cdots a_{k })}{ \tau_{ k n } ( a_1 \cdots a_{k} ) },
  			\frac{ \vrt_n (a_{k + 1 })}{\tau_n (a_{k + 1} )} \Big\}
 \]
 together with the analogous lower bound.
 Hence, from \eqref{eq:large-dev-const} and our inductive hypothesis
 \[
 	 \frac{ \vrt_{(k + 1)n} } { \tau_{ ( k + 1 ) n } }(a)
	  \in B_{  3 \eps /  4 }(\p\,),
 \]
  yielding~\eqref{eq:large-dev-const}.

We finally conclude~\eqref{eq:large-dev-tau}. As $\cA(n)$ only contains finitely many cylinders, the value 
\[M \eqdef \max_{ a \in \cA (n)  } \tau_{ n } (a)  \] is finite. Let $b \in \cQ^{ s } \cap \cA(n)$ with $s \in\{1,\ldots, n\}$. It follows from Lemma~\ref{lemlinalgebra}  item~\eqref{eq:n+m-with-n} that
  \begin{equation}
    \label{eq:large-dev-tau-bound}
    \left|
      \frac{ \vrt_{ k n + s } }{ \tau_{ k n + s } } (ab)
      -\frac{\vrt_{ k n   }}{ \tau_{  k n   }} (a)
    \right|
    \le \frac{2\vrt_s(b)}{ \vrt_{ k n } (a) + \vrt_s(b) }
    \le 2 \frac{M}{ k n + s }.
  \end{equation}
  Choose $N_0 \eqdef N_0 (\cA(n)) > n$ so that $2 M / N_0 < \eps / 4$.
 For every $\ell > N_0$ and $a \in \cQ^\ell_{\Lambda(n)}$ relation \eqref{eq:large-dev-tau-bound} together with \eqref{eq:large-dev-const} implies  that $( \vrt_{ \ell } / \tau_{ \ell } )(a) \in B_{ \eps } ( \p )$. This proves the lemma.
\end{proof}

\subsection{Geometric properties of $\cA(n)$}
\label{sec:geom-An}

For each $n$ large enough so that Proposition~\ref{prop:const-An} holds, let $\cA(n)$ be as provided by Proposition~\ref{prop:const-An} and let $\Lambda(n) = \Lambda( \cA(n) )$.

\begin{lemma}\label{lem:dim-An}
  There exists a $C'> 0$ such that $\dim_{\rm H} \Lambda (n)  > 1 - C'/n$ for all $n$ sufficiently large.
\end{lemma}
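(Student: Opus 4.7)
The plan is to control $\dim_{\rm H}\Lambda(n)$ via the virtual dimension $\vdim\Lambda(n)$ using Lemma~\ref{lem:approxvdim}, then to bound $\vdim\Lambda(n)$ from below by combining the uniform bound $|a|\le \lambda^{-n}$ for $a\in\cQ^n$ (see Remark~\ref{rem:dist-exp}) with the Lebesgue mass lower bound $\Leb(\cA(n))\ge C$ coming from item~\eqref{item2lem:def-An-new} of Proposition~\ref{prop:const-An}.

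First I verify that $F^n\colon\Lambda(n)\to\Lambda(n)$ is topologically mixing so that Lemma~\ref{lem:approxvdim} applies. By the construction in the proof of Proposition~\ref{prop:const-An}, for \emph{every} pair $(i,j)\in\{1,\ldots,L\}^2$ the collection $\cA(n)$ contains a cylinder $a\subset Y_i$ with $F^n(a)=Y_j$. Given $a_1,a_2\in\cA(n)$, with $F^n(a_1)=Y_{s_1}$ and $a_2\subset Y_{p_2}$, for any $k\ge 2$ one can choose intermediate $a^{(1)},\ldots,a^{(k-1)}\in\cA(n)$ with matching indices so that the concatenation $a_1a^{(1)}\cdots a^{(k-1)}a_2\in\cQ^{(k+1)n}$ is nonempty. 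This gives topological mixing of $F^n|_{\Lambda(n)}$, and hence
\[
  \bigl|\dim_{\rm H}\Lambda(n)-\vdim\Lambda(n)\bigr|\le \frac{D}{n\log\lambda-D}.
\]

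Next I estimate $s\eqdef\vdim\Lambda(n)$ from below. Since $\sum_{a\in\cA(n)}|a|\le 1$ with at least two elements of $\cA(n)$ having positive length strictly less than $1$, one has $s\le 1$. Using $|a|\le\lambda^{-n}$ and $s-1\le 0$, we get $|a|^{s-1}\ge\lambda^{n(1-s)}$, so
\[
  1=\sum_{a\in\cA(n)}|a|^{s}
  =\sum_{a\in\cA(n)}|a|^{s-1}|a|
  \ge \lambda^{n(1-s)}\sum_{a\in\cA(n)}|a|
  \ge \lambda^{n(1-s)}\,C,
\]
where the final inequality uses $\Leb(\cA(n))\ge\Leb(\cA(n)\cap Y_1)>C$ from Proposition~\ref{prop:const-An}. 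Taking logarithms yields $1-s\le |\log C|/(n\log\lambda)$, and combining with the displayed inequality above gives $\dim_{\rm H}\Lambda(n)\ge 1-C'/n$ for a suitable $C'>0$ and all $n$ large enough.

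There is no substantial obstacle: the only point that requires care is verifying mixing, which is why the construction in Proposition~\ref{prop:const-An} was set up to include \emph{every} admissible pair of boundary indices $(i,j)$ via the cylinders $a_{ij}$ produced by Lemma~\ref{lem:k0}; the rest is a direct calculation.
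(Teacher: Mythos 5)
Your proof is correct and follows essentially the same route as the paper: both pass through Lemma~\ref{lem:approxvdim} and bound the virtual dimension from below using $|a|\le\lambda^{-n}$ together with the Lebesgue lower bound from item~\eqref{item2lem:def-An-new} of Proposition~\ref{prop:const-An}. The only differences are minor: you derive $1-\vdim\Lambda(n)\le|\log C|/(n\log\lambda)$ by a direct inequality where the paper runs a mean-value argument on $h(t)=\sum_{a\in\cA(n)}|a|^{1-t}$, and you explicitly check the topological mixing hypothesis (via the cylinders $a_{ij}$), which the paper leaves implicit.
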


\begin{proof}
Consider the function $h \colon [0,1] \to \R$ given by
\[
    	h (t) \eqdef \sum_{ a \in \cA(n) } |a|^{ 1 - t }.
\]
Let $s(n) \eqdef 1 -  \vdim \Lambda(n) \in [0,1]$ so that, by~\eqref{defvDim}, $h ( s(n) ) = 1$. It follows from item \eqref{item2lem:def-An-new} in Proposition~\ref{prop:const-An} that
\begin{equation}\label{eqhere}
	h (0) = \sum_{ a \in \cA(n) } |a|
		= \Leb ( \cA(n) ) 
		> C,
\end{equation}
and this property holds for all $n$ sufficiently large. From~\eqref{smallLeb} we get
\begin{align*}
    	h ' (t) 
	= - \sum_{ a \in \cA(n) } \log |a| \cdot |a|^{ 1 - t }
    	&\ge n \log \lambda \cdot \sum_{ a \in \cA (n) } |a|^{ 1 - t } \\
    	&\ge n \log \lambda \cdot \sum_{ a \in \cA (n) } |a|\\
    	\text{by \eqref{eqhere}}\quad
	&\ge C n  \log \lambda
\end{align*}
  giving that $h'(t) \ge Cn \log \lambda$. It follows that
  \[
    s (n)\le \frac{ 1 - h (0) }{ \| h' \|_{ \infty } } \le \frac{ 1 - C }{ C n \log \lambda }
    ,
  \]
  and in particular $\vdim \Lambda (n) = 1 - s(n) +  o(1)$. Using Lemma~\ref{lem:approxvdim}, we get that for some $C' > 0$,
  \[
    \dim_{ \rm H } \Lambda (n)
    \ge \vdim \Lambda (n) - \frac{ D }{ n \log \lambda - D }
    \ge  1 - \frac{ C' }{ n },
  \]
  which yields the assertion of the lemma.
\end{proof}

Fix $m \coloneqq m(n)$ to be the geometric measure for $F^n|_{\Lambda(n)}$ whose existence is guaranteed by Lemma~\ref{existence-m}.

\begin{lemma}\label{lem:uni-geom-An}
  For all $n$ sufficiently large, there exists an $N_1 = N_1 (\cA(n))$ so that
  \[
    \left|
    \frac{ \log m (a) }{ \log |a| } - 1
    \right|
    < \eps
    \quad
    \text{for all $\ell > N_1$ and all $a\in \cQ_{\Lambda(n)}^\ell$}
    .
  \]
\end{lemma}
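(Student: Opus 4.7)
The plan is to sandwich $\log m(a)/\log|a|$ between $d\pm o(1)$, where $d\eqdef\dim_{\rm H}\Lambda(n)$, and then invoke Lemma~\ref{lem:dim-An} which gives $|d-1|\le C'/n$. Fix $n$ so large that $C'/n<\eps/2$; it then suffices to show $|\log m(a)/\log|a|-d|<\eps/2$ for all sufficiently large $\ell$ and all $a\in\cQ_{\Lambda(n)}^\ell$.

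Fix $x\in a\cap\Lambda(n)$. The upper bound on $m(a)$ is routine: since $a$ is an interval of length $|a|$ containing $x$, $a\subset B_{|a|}(x)$, and~\eqref{eq:def-geom} with $r=|a|$ gives $\log m(a)\le d\log(2|a|)+E$. Dividing by $\log|a|<0$ reverses the inequality and yields $\log m(a)/\log|a|\ge d-(E+d\log 2)/|\log|a||$.

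The matching lower bound on $m(a)$ is the main technical step. Using Remark~\ref{rem:ell-cover-lambda} write $\ell=kn+s$ with $0\le s<n$, so that $a=a_1\cdots a_k b$ with $a_i\in\cA(n)$ and $b\in\cQ^s_{\cA(n)}$. When $s>0$, since $\cQ^n$ refines $\cQ^s$ and $\cA(n)\subset\cQ^n$, the condition $b\in\cQ^s_{\cA(n)}$ forces some $a_{k+1}\in\cA(n)$ to be contained in $b$; then $c\eqdef a_1\cdots a_{k+1}\subset a$ is a $(k+1)$-cylinder of the repeller $F^n|_{\Lambda(n)}$. Bounded distortion applied to $F^{nk}$ on $a_1\cdots a_k$ gives $|c|/|a|\asymp|a_{k+1}|/|b|\ge D^{-1}\lambda^{-n}$, a positive constant depending only on $n$. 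Because $m$ is the Gibbs state for $-d\log|(F^n)'|$ (see the paragraph preceding Lemma~\ref{existence-m}), the standard Gibbs estimate $m(c)\asymp|c|^d$ for $F^n$-cylinders gives $m(a)\ge m(c)\ge C_n|a|^d$ and thus $\log m(a)/\log|a|\le d+|\log C_n|/|\log|a||$. The degenerate case $s=0$ is immediate, as $a$ is itself an $F^n$-cylinder.

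Combining the two bounds, and using $|a|\le\lambda^{-\ell}$ (Remark~\ref{rem:dist-exp}) so that $|\log|a||\ge\ell\log\lambda\to\infty$, both error terms tend to $0$ as $\ell\to\infty$; choose $N_1=N_1(\cA(n))$ large enough that both are below $\eps/2$. Then $|\log m(a)/\log|a|-d|<\eps/2$ and, with $|d-1|<\eps/2$, the lemma follows. The main obstacle is the lower bound on $m(a)$: a general $F$-cylinder is not an $F^n$-cylinder, so the Gibbs estimate does not apply to $a$ directly. Extracting an $F^n$-sub-cylinder $c\subset a$ of length comparable to $|a|$ (up to a factor $\asymp\lambda^{-n}$) circumvents this, via the combinatorial observation that when $s<n$, every $b\in\cQ^s_{\cA(n)}$ contains a full element of $\cA(n)\subset\cQ^n$.
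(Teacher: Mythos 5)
Your proof is correct, and its skeleton matches the paper's: both reduce the lemma to (i) a cylinder-level estimate $\bigl|\tfrac{\log m(a)}{\log|a|}-\dim_{\rm H}\Lambda(n)\bigr|\le \tfrac{\text{const}(\cA(n))}{|\log|a||}$, (ii) the bound $\dim_{\rm H}\Lambda(n)>1-C'/n$ from Lemma~\ref{lem:dim-An}, and (iii) the expansion bound $|\log|a||\ge\ell\log\lambda$ from \eqref{smallLeb} so that the error term drops below $\eps/2$ once $\ell>N_1$. The difference is in how you obtain (i): the paper reads the cylinder estimate directly off the ball-based property \eqref{eq:def-geom}, absorbing the ball-to-cylinder passage into the constant $E(n)$, whereas you actually prove it --- the upper bound on $m(a)$ by enclosing $a$ in a ball centred at a point of $\Lambda(n)$, and the lower bound (the only non-routine point, since a cylinder of length $\ell\not\equiv 0 \ (\mathrm{mod}\ n)$ is not a cylinder of the repeller $F^n|_{\Lambda(n)}$) by extracting an $F^n$-sub-cylinder $c\subset a$ with $|c|\gtrsim\lambda^{-n}|a|$ and invoking the Gibbs estimate $m(c)\asymp|c|^{\dim_{\rm H}\Lambda(n)}$. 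Your combinatorial step is sound: for $0<s<n$, an element of $\cA(n)$ meeting $\intr(b)$ must lie inside $b$ because distinct $s$-cylinders have disjoint interiors and each $n$-cylinder sits in a unique $s$-cylinder, and the concatenation $a_1\cdots a_k a_{k+1}$ is admissible since $a_{k+1}\subset b\subset F^{kn}(a_1\cdots a_k)\in\cY$. The one thing to flag is that your lower bound uses the Gibbs-state characterization of $m$ (the paragraph before Lemma~\ref{existence-m}) rather than only the ball property \eqref{eq:def-geom} recorded in the definition of a geometric measure; since that is exactly how the paper constructs $m$, this extra input is legitimate, and in exchange your argument makes explicit a step the paper's proof leaves implicit.
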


\begin{proof}
As $m$ is geometric for $F^n|_{\Lambda(n)}$, by~\eqref{eq:def-geom}  there exists an $E(n) > 0$ so that 
  \begin{equation}\label{eq:dim-lamabda-n}
    \dim_{ \rm H } \Lambda (n) + \frac{ E(n) }{ \log |a| }
    \le
    \frac{ \log m (a) }{ \log |a| }
    \le
    \dim_{ \rm H } \Lambda (n) - \frac{ E(n) }{ \log |a| }
    .
  \end{equation}
 Using~\eqref{smallLeb},
 we can choose $N_1 = N_1 (\cA(n)) \in\bN$ so that for $\ell \ge N_1$,
 \begin{equation}\label{eq:error-dim-lambda-n}
 	\sup_{a\in\cQ^\ell}\frac{E(n)}{-\log|a|}
	\le \frac{E(n)}{\ell \log\lambda}
	<\eps/2
  .
 \end{equation}
 If $n > 2C^\prime / \eps$, then Lemma~\ref{lem:dim-An} ensures $\dim_{ \rm H } \Lambda (n) \ge 1 - \eps/2$. Combining this with~\eqref{eq:dim-lamabda-n} and~\eqref{eq:error-dim-lambda-n} concludes the proof.
\end{proof}

\subsection{Proof of Proposition~\ref{prop:approx-new}}
\label{sec:proof-of-approx}

Fix $n$ large enough so that Lemma~\ref{lem:uni-geom-An} and  Proposition~\ref{prop:const-An} hold.
Let $\cA = \cA(n) \subset \cQ^n$  be as in Proposition~\ref{prop:const-An} and let $N_0,N_1$ be as in Lemmas~\ref{lem:large-dev-tau} and~\ref{lem:uni-geom-An}. Fix $N \coloneqq \max\{ N_0, N_1\}$ and let $m$ be the geometric measure for $F^n|_{\Lambda ( \cA)}$ whose existence is guaranteed by Lemma~\ref{existence-m}. Then, Lemmas~\ref{lem:large-dev-tau} and~\ref{lem:uni-geom-An} yield~\eqref{eq:approx}. Moreover, $m ( \Lambda ( \cA ) )= 1$ and $F^n ( \cA ) = \cY$ by item (\ref{item3lem:def-An}) of Proposition~\ref{prop:const-An}.

Fix $i \in \{ 1 , \ldots, L\}$. Given $r > 0$, let $\cB(r)$ be a finite collection of balls of diameter $r$ so that  $\cB (r) \subset \cA \cap Y_i$  and that $\Leb (\cB(r) ) \ge \frac12\Leb ( \cA \cap Y_i )$. Then, from~\eqref{eq:def-geom}, we know that  for $r > 0$ sufficiently small there is a $E' > 0$ 
\begin{align*}
   m( \Lambda ( \cA ) \cap Y_i ) 
   = m ( \cA \cap Y_i )
   \ge \sum_{ b \in \cB(r) } m (b) 
   \ge E' \sum_{ b \in \cB(r) } |b|^d
   &\ge E' r^{d-1} \Leb ( \cB(r) ) \\
   &\ge E' r^{d-1} \frac12 \Leb (  \cA \cap Y_i  ).
\end{align*}
The latter is strictly positive by item (\ref{item2lem:def-An-new}) of Proposition~\ref{prop:const-An}.
This concludes the proof.
\qed

\section{Proof of Corollary~\ref{cor:dim-H-nup}}
\label{sec:bridging}

Throughout this section we fix an arbitrary $\p \in \cS$. We are going to show that
\begin{equation}\label{eq:dim-H-gp}
  \dim_{\rm H} ( \mathscr{G} (\nu_\p) )  = 1.
\end{equation}
To do so, we first construct a fractal set $\Gamma$, see Section~\ref{sec:const-gamma}. In Section \ref{sec:const-m} we construct a Borel probability measure supported on $\Gamma$. Section \ref{sec:choice-of-ki} states some auxiliary results.  In Section~\ref{sec:subset} we show that $\Gamma \subset \mathscr{G} (\nu_\p)$. In Section~\ref{sec:dim} we prove $\dim_{\rm H} ( \Gamma ) = 1$ which implies~\eqref{eq:dim-H-gp}.

\subsection{Construction of a fractal set $\Gamma$}\label{sec:const-gamma}

Fix a sequence $(\eps_{ i })_{i\ge 0} \subset (0,1)$ such that $\lim_{i\to\infty}\eps_{ i } = 0$.
For each $i \ge 0$ fix
\[
	n_i = n(\varepsilon_i, \p), \;\;
	\cA_{ i } = \cA (\eps_{ i }, \p)\subset\cQ^{n_i}, 
	\;\; \Lambda_{ i } = \Lambda ( \cA_{ i } ), \;\;
	N_{ i } = N (\eps_{ i }, \p) ,
  \; \text{ and } \;
  m_{ i } = m( \eps_{ i }, \p)
\]	 
as given by Proposition~\ref{prop:approx-new}.

Let $(k_{ i })_{i \ge 0} \subset \N$ be an arbitrary sequence of integers.
In the remainder of this subsection, we construct a fractal set $\Gamma = \Gamma ( (k_i)_i )$ which will depend on this sequence. The precise choice of $(k_i)_{i}$, and hence the precise choice of $\Gamma$, will  be fixed in Section~\ref{sec:choice-of-ki}.
Set
\[
  t_{ 0 } \eqdef 0,
  \quand
  t_{ i } \eqdef
  n_0 k_0+\cdots+n_{i-1}k_{i-1}
  ,
\]
and define
\begin{equation}\label{eqnested}
  \cB_{ i } \eqdef \bigvee_{ j = 0 }^{ i - 1 } F^{ -t_{ j } } \cQ_{\Lambda_{ j }}^{ k_j n_j }
  \quand
  \Gamma
  =
  \Gamma\big((k_i)_i\big)
  \eqdef
  \bigcap_{ i \in\bN }
  \bigcup_{ b \in \cB_{i} } b 
  .
\end{equation}
Heuristically, the set $\Gamma$ consists of points that enter a neighbourhood of $\Lambda_i$ at time $t_i$ and spend time $n_ik_i$ close to $\Lambda_i$ before moving close to $\Lambda_{i+1}$ at time $t_{i+1}$.
Note that for each $i\ge 0$, $\cB_{ i } \subset \cQ^{ t_{ i } }$ and that the collection $\cB_{ i }$ covers $\Gamma$. By definition, the collections $(\cB_i)_i$ are ``nested'' in the sense that every element in $\cB_{i+1}$ is a subset of an element of $\cB_i$, for every $i$.

\begin{lemma}\label{lem:nonempty-compact}
	$\Gamma$ is nonempty and compact.
\end{lemma}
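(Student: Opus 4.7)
The plan is to view $\Gamma$ as a Cantor-style nested intersection of non-empty compact subsets of the compact space $X$. Compactness should be essentially immediate: by Remark~\ref{rem:ell-cover-lambda}, each collection $\cQ^{k_jn_j}_{\Lambda_j}$ contains at most $|\cA_j|^{k_j}$ elements, which is finite because each $\cA_j$ is finite by Proposition~\ref{prop:const-An}. Consequently every $\cB_i\subset\cQ^{t_i}$ is finite, each $b\in\cB_i$ is a closed sub-interval of $X$, and $\bigcup_{b\in\cB_i}b$ is a finite union of closed intervals, hence closed. The countable intersection $\Gamma$ is therefore closed in the compact space $X$, and thus compact.

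For non-emptiness, I would first observe that the family $\big(\bigcup_{b\in\cB_i}b\big)_{i\ge 0}$ is decreasing: any $b'\in\cB_{i+1}$ has the form $b'=b\cap F^{-t_i}a$ for some $b\in\cB_i$ and some $a\in\cQ^{k_in_i}_{\Lambda_i}$, and in particular $b'\subset b$. It then suffices, by the finite intersection property applied to nested non-empty compact sets, to verify $\cB_i\neq\emptyset$ for every $i$, which I would do by induction on $i$, with trivial base case $\cB_0=\{X\}$.

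The inductive step is where the only real content lies. Given $b\in\cB_i$, I need an $a\in\cQ^{k_in_i}_{\Lambda_i}$ with $b\cap F^{-t_i}a\neq\emptyset$. Because $F$ is Markov and by iterated application of item~(\ref{item3lem:def-An}) of Proposition~\ref{prop:const-An}, the map $F^{t_i}$ sends $b$ bijectively onto some $Y_\ell\in\cY$, so it is enough to produce $a\subset Y_\ell$. By Remark~\ref{rem:ell-cover-lambda}, such an $a$ has the form $a_1\cdots a_{k_i}$ with $a_j\in\cA_i$, $a_1\subset Y_\ell$, and $a_{j+1}\subset F^{n_i}(a_j)$ for each $j$. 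The existence of $a_1$ follows from item~(\ref{item2lem:def-An-new}) of Proposition~\ref{prop:const-An}, which gives $\Leb(\cA_i\cap Y_\ell)>0$ and hence $\cA_i\cap Y_\ell\neq\emptyset$; since $F^{n_i}(a_1)$ is some $Y_{\ell'}\in\cY$ by item~(\ref{item3lem:def-An}), the same argument produces $a_2\in\cA_i\cap Y_{\ell'}$, and iterating $k_i$ times yields the desired concatenation.

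The only step I anticipate requiring any care is this linking argument in the inductive step; it rests crucially on both the full-image property $F^{n_i}(\cA_i)=\cY$ and the Lebesgue non-triviality $\Leb(\cA_i\cap Y_\ell)>0$, both supplied by Proposition~\ref{prop:const-An}. Everything else is standard Cantor-type bookkeeping.
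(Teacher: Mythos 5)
Your argument is correct and is essentially the paper's: both realize $\Gamma$ as a nested intersection of finitely many nonempty compact cylinder unions, with nonemptiness coming from the linking property supplied by Proposition~\ref{prop:approx-new} (items~(\ref{item2lem:def-An-new})--(\ref{item3lem:def-An}) of Proposition~\ref{prop:const-An}). The only cosmetic differences are that you chain forwards, choosing a successor in $\cA_i\cap Y_\ell$ via the Lebesgue (or $m_i$-positivity) bound, whereas the paper chains backwards using only $F^{n_i}(\cA_i)=\cY$, and your base case should be $\cB_0=\{Y\}$ rather than $\{X\}$.
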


\begin{proof}
	It is enough to check that for every index $i\ge 0$ for every $a\in\cA_{i+1}$ there exists $a'\in\cA_i$ such that $F^{n_i}(a')\supset a$. This follows from the fact that  Proposition \ref{prop:approx-new} ensures  $F^{n_i} ( \cA_i ) = \cY$. So, every $\cB_i$ is nonempty and the intersection in \eqref{eqnested} is the intersection of a nested sequence of compact sets. This implies the assertion.
\end{proof}

\subsection{Construction of $m$}\label{sec:const-m}

We now  construct a Borel measure $m$ on $Y$ satisfying $m ( \Gamma ) =1$. We first define $m$ on cylinders $b a \in \cQ^{t_i + s}$ where $b\in \cQ^{t_i}$ and $a \in \cQ^{s}$ for $s = 1,\ldots,n_{i}k_{i}$, inductively on $i$.
For $i = 0$, $s =1,\ldots,n_1 k_1$, and $a \in\cQ^s$, we set
\[
m(a) = m_0 (a).
\]
Proceeding inductively, for $i \ge 1$, $s=1,\ldots,n_{i}k_{i}$, and $b a \in \cQ^{t_i + s}$ with $b \in \cQ^{t_i}$ and $a\in \cQ^s$, we define
\begin{equation}\label{eq:def-m-new}
  m(ba) = C_i(b) m(b) m_i (a)
\end{equation}
where
\begin{equation}
  C_{i} (b) \eqdef
  \left(
    m_i (
    \Lambda_i \cap
    F^{t_i} (b))
  \right)^{-1}
  =
    \left(
    m_i (
    F^{t_i} (b))
  \right)^{-1}.
\end{equation}

\begin{remark} \label{rem:Ci-positive}
  As $F^{t_i} (b) \in \cY$, Proposition~\ref{prop:approx-new} ensures that $C_i (b) > 0$ for all $b \in \cB_i$.
\end{remark}

This defines $m$ on all of $\cQ^*$. One readily checks that this extends to a Borel probability measure on $Y$. By construction $m(\cB_i) = 1$ for every $i\ge 1$, and in particular $m(\Gamma)=1$.

\subsection{Choice of the sequence $(k_i)_i$}\label{sec:choice-of-ki}

In the section we fix the choice of the sequence $(k_i)_i$. First, we introduce
\begin{equation}
  \label{eq:def-M}
  M_{ i } 
  \eqdef
  \max_{ a \in \cA_i }
  \max_{ 0 \le s \le n_i - 1 }
  \tau (F^{s}(a))
  ,
\end{equation}
and
\begin{equation}\label{eq:def-Ci}
  \widetilde C_i \coloneqq \sup_{ b \in \cQ_{\Gamma}^{t_i} } |\log C_i (b) |
  = \max_{j=1,\ldots,L }| \log m_i ( Y_j ) |
  .
\end{equation}

For every $x\in \Gamma$ and each $i\ge 0$ and $s = 1,\ldots, n_{i}k_{i}$ define the cylinders
\begin{equation}\label{eq:def-ai-bi-ais-1}
  a_{i,s} (x)\in \cQ_{\Lambda_i}^s
  \quad
  a_{i} (x) \in \cQ_{\Lambda_i}^{n_{i}k_{i}},
  \quand b_i(x) \in\cB_i
\end{equation}
such that
\begin{equation}\label{eq:def-ai-bi-ais-2}
  x \in b_{i}(x)a_{i,s} (x), \quad
  a_{i}(x) = a_{i,n_{i}k_i} (x),\quand
  b_i (x) = a_{0} (x) \cdots a_{i - 1} (x).
\end{equation}
In what is below, we will regularly suppress the dependence of the associated $b_i$ and $a_i$ on $x$ to ease notation.
See Figure~\ref{fig:Gamma}.
\begin{figure}[h] 
 \begin{overpic}[scale=.13]{./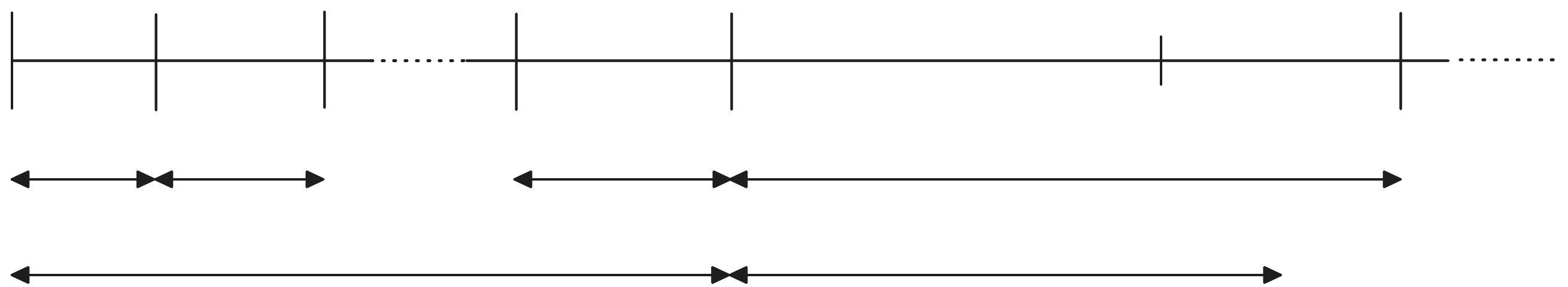}
 	\put(0,19){\small{$t_0$}}
 	\put(4.5,5){\small{$a_0$}}
 	\put(9,19){\small{$t_1$}}
 	\put(14,5){\small{$a_1$}}
 	\put(20,19){\small{$t_2$}}
 	\put(25,19){\small{$\cdots$}}
 	\put(31,19){\small{$t_{i-1}$}}
 	\put(38,5){\small{$a_{i-1}$}}
 	\put(46,19){\small{$t_i$}}
  	\put(67,5){\small{$a_i$}}
	\put(71,19){\small{$t_i+N_i$}}
 	\put(87,19){\small{$t_{i+1}$}}
 	\put(93,19){\small{$\cdots$}}
 	\put(23,-2){\small{$b_i$}}
  	\put(64,-2){\small{$a_{i,s}$}}
\end{overpic}
  \label{fig:Gamma}
  \caption{A sketch showing cylinders $b_i,a_{i}$ and $a_{i,s}$ as defined in~\eqref{eq:def-ai-bi-ais-1} and~\eqref{eq:def-ai-bi-ais-2}.}
\end{figure}

We can now state our precise conditions on $(k_i)_i$.
\begin{align}
  \label{eq:k_i-1}
  & k_i > \frac{ N_i }{ n_i } \\
  \label{eq:k_i-2}
  & \frac{ N_{i + 1 } M_{ i + 1 } }{ t_{i+1} } < \eps_i \\
  \label{eq:k_i-3}
  & \frac{ \sum_{ j = 0 }^{i - 1} M_j n_j k_j }{ n_{i} k_{i}} < \eps_i \\
  \label{eq:k-4-new}
  &\sup_{x \in \Gamma}
  \frac{ |\log m (b_{i} (x)) | + \widetilde C_{i} }{ |\log m_i (a_i (x) )| }
  \le \eps_i, \\
  \label{eq:k-5-new}
  &\sup_{ x \in \Gamma}
   \frac{ \big|\log |b_i (x) |\big| +  |\log D| }{\big|\log |a_i(x)|\big|}
  \le \eps_i
  , \\
  \label{eq:k-6-new}
  &\sup_{x \in \Gamma}
  \sup_{1 \le s \le N_i}
  \frac{|\log m_i( a_{i,s} (x) )| + \widetilde C_{i} }{  | \log m (b_i) |  }
  <\eps_{i-1}
  , \\
  \label{eq:k-7-new}
  &\sup_{ x \in \Gamma }
  \sup_{1 \le s \le N_i}
  \frac{\big|\log |a_{i,s}(x)|\big| + |\log D| }{ \big| \log |b_i(x)|\big| } < \eps_{i - 1}.
\end{align}

The remainder of the section we prove the following proposition.

\begin{proposition}\label{prop:ki}
  There exists a sequence $(k_i)_i$ such that~\eqref{eq:k_i-1}--\eqref{eq:k-7-new} hold.
\end{proposition}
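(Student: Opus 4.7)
The plan is to choose $(k_i)_i$ inductively. Since the quantities $n_i, N_i, \cA_i, \Lambda_i, m_i$ depend only on $\eps_i$ and $\p$ by Proposition~\ref{prop:approx-new}, and $M_i, \widetilde C_i$ are then determined by \eqref{eq:def-M}, \eqref{eq:def-Ci}, all of these are fixed before any $k_j$ is selected. At step $i$, I assume $k_0, \ldots, k_{i-1}$ have been chosen, so $\cB_i$ is a fixed finite collection of cylinders. I observe that the conditions actually constraining $k_i$ are precisely \eqref{eq:k_i-1}--\eqref{eq:k-5-new} at index $i$, together with \eqref{eq:k-6-new} and \eqref{eq:k-7-new} at index $i+1$ (since $b_{i+1}$ depends on $k_i$ through $a_i$). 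I will show each of these seven conditions produces an explicit lower bound on $k_i$ in terms of quantities already fixed at step $i$, so taking $k_i$ larger than the maximum of these bounds completes the induction.

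Conditions \eqref{eq:k_i-1}--\eqref{eq:k_i-3} are immediate, as they are linear inequalities in $k_i$ with the other quantities already determined. For \eqref{eq:k-4-new} and \eqref{eq:k-5-new} at index $i$, since numerators involve only $b_i(x) \in \cB_i$ (a fixed finite collection) while denominators involve $a_i(x) \in \cQ^{n_i k_i}_{\Lambda_i}$, the sup over $x \in \Gamma$ is bounded by
\[
\frac{\max_{b \in \cB_i}|\log m(b)| + \widetilde C_i}{\min_{a \in \cQ^{n_i k_i}_{\Lambda_i}}|\log m_i(a)|},
\]
and by the analogous ratio with $m$, $m_i$ replaced by Lebesgue. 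The numerators are fixed finite constants given $k_0,\ldots,k_{i-1}$; for the denominators, \eqref{smallLeb} yields $|\log|a|| \ge n_i k_i \log \lambda$, and combining the geometric property \eqref{eq:def-geom} with Lemma~\ref{lem:dim-An} (which, for $n_i$ large, gives $\dim_{\rm H}\Lambda_i > 1/2$) yields $|\log m_i(a)| \ge c_i n_i k_i$ for some $c_i > 0$ and all $a \in \cQ^{n_i k_i}_{\Lambda_i}$. Hence both ratios tend to $0$ as $k_i \to \infty$.

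Conditions \eqref{eq:k-6-new} and \eqref{eq:k-7-new} at index $i+1$ are handled analogously. Their numerators involve cylinders of length $s \le N_{i+1}$ in $\cQ^s_{\Lambda_{i+1}}$, a finite collection independent of every $k_j$, hence bounded by fixed constants. For the denominators, the recursive formula \eqref{eq:def-m-new} together with $|\log C_i(b_i)| \le \widetilde C_i$ gives
\[
|\log m(b_{i+1})| \ge |\log m_i(a_i)| - \widetilde C_i,
\]
while bounded distortion gives $|\log|b_{i+1}|| \ge |\log|a_i|| - |\log D|$. Substituting the linear-in-$k_i$ lower bounds from the previous paragraph, both denominators grow linearly in $k_i$, driving the corresponding ratios to $0$. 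The main obstacle is bookkeeping: correctly matching each of the seven conditions to a constraint on $k_i$ and ensuring the estimate $|\log m_i(a)| \gtrsim n_i k_i$ is genuinely linear. This last point relies on $\dim_{\rm H}\Lambda_i$ being uniformly bounded away from $0$, which Lemma~\ref{lem:dim-An} supplies provided we have chosen $n_i$ large enough at the outset.
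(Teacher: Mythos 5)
Your proposal is correct and follows essentially the same route as the paper: choose $k_i$ recursively, noting that the numerators in~\eqref{eq:k_i-3}--\eqref{eq:k-5-new} (and, at index $i+1$, in~\eqref{eq:k-6-new}--\eqref{eq:k-7-new}) are fixed once $k_0,\ldots,k_{i-1}$ are, while the denominators grow linearly in $k_i$ by the expansion bound~\eqref{smallLeb} and the measure estimate for $m_i$ on long cylinders. The only cosmetic differences are that you bound $|\log m(b_{i+1})|$ via the recursion~\eqref{eq:def-m-new} where the paper invokes Lemma~\ref{lem:bi-1}, and you derive $|\log m_i(a)|\gtrsim n_ik_i$ from~\eqref{eq:def-geom} and Lemma~\ref{lem:dim-An} where the paper cites~\eqref{eq:approx} directly; both substitutions are equivalent in content.
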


We divide the proof into the following lemmas. Recall from Proposition~\ref{prop:approx-new} that
\begin{equation} \label{eq:geom-meas-uni}
  \frac{ \log m_i (a_{i,s} (x)) }{ \log |a_{i,s} (x)| } \in B_{\eps_i} ( 1 )
  \quad \text{for all } N_i \le s \le k_i n_i
  .
\end{equation}

\begin{lemma}\label{lem:bi-1}
  If~\eqref{eq:k-4-new} and~\eqref{eq:k-5-new} holds, then for every $x \in \Gamma$,
  \[
    \lim_{ i \to \infty }
    \frac{ \log m ( b_{ i } (x) ) }{ \log | b_{ i} (x) | }
    = 1
    .
  \]
\end{lemma}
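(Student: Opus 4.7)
The plan is to exploit the inductive definition of $m$ and the distortion bounds to reduce the ratio $\log m(b_{i+1}(x))/\log|b_{i+1}(x)|$ to a ratio of the form $\log m_i(a_i(x))/\log|a_i(x)|$, which we already control by Proposition~\ref{prop:approx-new}.

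First I would write $b_{i+1}(x) = b_i(x) a_i(x)$ with $b_i(x) \in \cQ^{t_i}$ and $a_i(x) \in \cQ_{\Lambda_i}^{n_i k_i}$. By the construction of $m$ in~\eqref{eq:def-m-new},
\[
  \log m(b_{i+1}(x)) = \log C_i(b_i(x)) + \log m(b_i(x)) + \log m_i(a_i(x)),
\]
where $0 \le \log C_i(b_i(x)) \le \widetilde C_i$ by~\eqref{eq:def-Ci}. Meanwhile, the distortion estimate of Remark~\ref{rem:dist-exp} yields
\[
  \bigl|\log |b_{i+1}(x)| - \log|b_i(x)| - \log|a_i(x)|\bigr| \le \log D.
\]

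Second, I would separate the dominant terms. The assumption~\eqref{eq:k-4-new} gives
\[
  \bigl|\log m(b_{i+1}(x)) - \log m_i(a_i(x))\bigr|
  \le |\log m(b_i(x))| + \widetilde C_i
  \le \eps_i |\log m_i(a_i(x))|,
\]
while assumption~\eqref{eq:k-5-new} gives
\[
  \bigl|\log|b_{i+1}(x)| - \log|a_i(x)|\bigr|
  \le \bigl|\log|b_i(x)|\bigr| + \log D
  \le \eps_i \bigl|\log|a_i(x)|\bigr|.
\]
Hence there exist $\delta_i^{(1)}, \delta_i^{(2)}$ with $|\delta_i^{(j)}| \le \eps_i$ such that
\[
  \frac{\log m(b_{i+1}(x))}{\log|b_{i+1}(x)|}
  = \frac{\log m_i(a_i(x))}{\log|a_i(x)|} \cdot \frac{1 + \delta_i^{(1)}}{1 + \delta_i^{(2)}}.
\]

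Third, I would invoke Proposition~\ref{prop:approx-new}. Since $a_i(x) \in \cQ_{\Lambda_i}^{n_i k_i}$ and $n_i k_i > N_i$ by~\eqref{eq:k_i-1}, estimate~\eqref{eq:geom-meas-uni} applies and yields $\log m_i(a_i(x))/\log|a_i(x)| \in B_{\eps_i}(1)$. Combined with the correction factor $(1+\delta_i^{(1)})/(1+\delta_i^{(2)}) \to 1$, this gives the desired limit as $i \to \infty$, uniformly in $x \in \Gamma$.

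There is essentially no serious obstacle: the argument is pure bookkeeping once one recognizes that~\eqref{eq:k-4-new} is precisely the statement that $|\log m(b_i)| + \widetilde C_i$ is negligible compared to the ``new'' contribution $|\log m_i(a_i)|$, and similarly for~\eqref{eq:k-5-new} on the geometric side. The only mild care needed is with signs (all relevant logarithms of measures and lengths are non-positive, while $\log C_i(b_i) \ge 0$), which is handled cleanly by working with absolute values.
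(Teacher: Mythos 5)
Your proposal is correct and follows essentially the same route as the paper: decompose $b_{i+1}=b_ia_i$, use the inductive definition~\eqref{eq:def-m-new} together with the distortion bound of Remark~\ref{rem:dist-exp}, absorb the $|\log m(b_i)|+\widetilde C_i$ and $|\log|b_i||+\log D$ terms via~\eqref{eq:k-4-new}--\eqref{eq:k-5-new}, and conclude with the geometric-measure estimate~\eqref{eq:geom-meas-uni}. The only difference is cosmetic: you package the errors as multiplicative factors $(1+\delta_i^{(1)})/(1+\delta_i^{(2)})$, whereas the paper writes separate upper and lower bounds of the form $(1\pm\eps_i)\frac{1\pm\eps_i}{1\mp\eps_i}$.
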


\begin{proof}
  Let $x\in \Gamma$ and consider the associated sequence $(a_i)_i$ and $(b_i)_i$ in~\eqref{eq:def-ai-bi-ais-1} and~\eqref{eq:def-ai-bi-ais-2}.
  Using the definition of $m$ in~\eqref{eq:def-m-new}, the bounded distortion of $F$ (see Remark~\ref{rem:dist-exp}) we get 
  \begin{align*}
    \frac{ \log m ( b_{ i + 1} ) }{ \log |b_{ i + 1}| }
    \ge
    \frac{ \log m ( b_{ i } a_i ) }{ \log  |b_{ i } a_i| }
    &\ge
    \frac{ \log m ( b_i ) + \log m_i (a_i) + \log C_i (b_i) }
    { \log |b_i|  + \log |a_i| - \log D } \\
    &\ge
    \frac{ \log m_i (a_i) }{ \log |a_i| }
    \left(
      \frac{ 1
       + \frac{ \log m ( b_i ) + \widetilde{C}_i }{ \log m_i (a_i) }
      }{ 1
       + \frac{ \log |b_i| - \log D }{ \log |a_i|}
      }
    \right)
    .
  \end{align*}
  Using~\eqref{eq:geom-meas-uni}, together with~\eqref{eq:k-4-new} and~\eqref{eq:k-5-new}, we find that
  \[
    \frac{ \log m ( b_{ i + 1} ) }{ \log  |b_{ i + 1}| } \ge 
    (1 - \eps_i) \frac{ 1 - \eps_i }{ 1 + \eps_i }.
  \]
  Proceeding in the same way, one obtains the upper bound
  \[
    \frac{ \log m ( b_{i + 1} )}{\log |b_{i + 1 }| }
    \le
    (1 + \eps_i) \frac{ 1 + \eps_i }{ 1 - \eps_i }.
  \]
  Taking the limit as $i\to \infty$ concludes the proof.
\end{proof}

\begin{lemma}\label{lem:ki-4-and-5}
  If the sequence $(k_i)_{i}$ grows sufficiently quickly,
  then~\eqref{eq:k-4-new} and~\eqref{eq:k-5-new} hold.
\end{lemma}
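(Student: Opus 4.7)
The plan is to construct $(k_i)_i$ by induction, exploiting that in each ratio appearing in \eqref{eq:k-4-new}--\eqref{eq:k-5-new} the numerator is bounded by quantities determined solely by the previously chosen indices $k_0,\ldots,k_{i-1}$ (together with the already-fixed approximating data), while the denominator grows at least linearly in $k_i$. Fix $i$ and assume $k_0,\ldots,k_{i-1}$ and hence $t_i$ have been chosen. Since each $\cA_j$ is a finite collection (Proposition~\ref{prop:const-An}), so is each $\cQ^{k_jn_j}_{\Lambda_j}$, and therefore $\cB_i=\bigvee_{j=0}^{i-1}F^{-t_j}\cQ^{k_jn_j}_{\Lambda_j}$ is a finite family of $t_i$-cylinders. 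Each $b\in\cB_i$ decomposes as $b=a_0\cdots a_{i-1}$ with $a_j\in\cQ^{k_jn_j}_{\Lambda_j}$; since each $m_j$ is a Gibbs measure on the expanding repeller $\Lambda_j$ it gives positive mass to every nonempty cylinder of $\cQ^*$, and together with the positivity of the constants $C_j(\cdot)$ (Remark~\ref{rem:Ci-positive}) this yields $m(b)>0$. Hence the constants
\[
\mu_i\eqdef\max_{b\in\cB_i}|\log m(b)|\quand
\nu_i\eqdef\max_{b\in\cB_i}\big|\log|b|\big|
\]
are finite. Moreover $\widetilde C_i$ is a finite constant as well, being determined by $m_i$ and the finite family $\cY$.

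Next I would bound the denominators from below. For every $x\in\Gamma$, the uniform expansion estimate~\eqref{smallLeb} gives $\big|\log|a_i(x)|\big|\ge n_ik_i\log\lambda$, and provided $k_i$ is chosen so that $n_ik_i\ge N_i$ (that is, condition~\eqref{eq:k_i-1} is already in force), the geometric bound~\eqref{eq:geom-meas-uni} from Proposition~\ref{prop:approx-new} applied at $s=n_ik_i$ yields
\[
|\log m_i(a_i(x))|\ge (1-\eps_i)\big|\log|a_i(x)|\big|\ge (1-\eps_i)\,n_ik_i\log\lambda.
\]
Combining these with the uniform upper bounds $|\log m(b_i(x))|\le\mu_i$ and $\big|\log|b_i(x)|\big|\le\nu_i$ supplied by the previous paragraph, we find that for every $x\in\Gamma$,
\[
\frac{|\log m(b_i(x))|+\widetilde C_i}{|\log m_i(a_i(x))|}\le \frac{\mu_i+\widetilde C_i}{(1-\eps_i)\,n_ik_i\log\lambda},\qquad
\frac{\big|\log|b_i(x)|\big|+|\log D|}{\big|\log|a_i(x)|\big|}\le \frac{\nu_i+|\log D|}{n_ik_i\log\lambda}.
\]

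Both right-hand sides depend only on data already fixed by $(k_0,\ldots,k_{i-1})$ (and on $k_i$ in the denominator) and tend to $0$ as $k_i\to\infty$, so choosing $k_i$ sufficiently large, also satisfying~\eqref{eq:k_i-1}, forces both bounds to be $\le\eps_i$, which is the conclusion of the lemma. There is no genuine obstacle; the one point to verify is that the induction is well-posed, namely that $\mu_i$, $\nu_i$, and $\widetilde C_i$ are functions of $(k_0,\ldots,k_{i-1})$ and of the already-fixed approximating data $(\cA_j,m_j,N_j)_{j\le i}$, but not of $k_i$. This is immediate from the definitions, so the inductive choice of $(k_i)_i$ is consistent and the lemma follows.
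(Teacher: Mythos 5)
Your argument is correct and follows essentially the same route as the paper: bound the numerators by a constant depending only on $k_0,\ldots,k_{i-1}$ (via finiteness of $\{b_i(x)\colon x\in\Gamma\}$), bound the denominators from below by $(1-\eps_i)\,n_ik_i\log\lambda$ using \eqref{smallLeb} and \eqref{eq:geom-meas-uni}, and then choose $k_i$ recursively large enough. Your extra observations — that $m(b)>0$ for each $b\in\cB_i$ and that \eqref{eq:geom-meas-uni} at $s=n_ik_i$ requires \eqref{eq:k_i-1} — are correct refinements of details the paper leaves implicit.
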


\begin{proof}
  Note that the collection $\{ b_i (x) \colon x \in \Gamma \}$ is finite. We also know that the quantities on the tops of the fractions in~\eqref{eq:k-4-new} and~\eqref{eq:k-5-new} depend on $k_0,\ldots,k_{i-1}$ but not $k_i$. So, there exists some constant $\gamma_i < +\infty$ which is independent of $k_i$ (but may depend on $k_0,\ldots,k_{i-1}$) such that
  \[
    \sup_{x\in\Gamma} |\log m ( b_i (x) )| + \widetilde C_i
    < \gamma_i,
    \quand
    \sup_{x\in\Gamma}\big| \log |b_i (x) |\big| + |\log D | < \gamma_i.
  \]
  
  We know from Remark~\ref{eq:distotion-expansion} that $\big|\log |a_i(x)|\big| \ge  k_{i} n_i \log \lambda$.
  We also know from~\eqref{eq:geom-meas-uni} that $\log m_{i} (a_i) / \log |a_{i} | \in B_{\eps_i} (1)$. In particular, we must have that
  \[  
    | \log m_{i} (a_i)|
    \ge (1 - \eps_i ) n_i k_i \log \lambda.
  \]

  So, we can choose $k_i$ recursively to grow fast enough so that~\eqref{eq:k-4-new} and~\eqref{eq:k-5-new} hold.
\end{proof}

\begin{lemma}\label{lem:ki-6-and-7}
  If the sequence $(k_i)_{i}$ grows sufficiently quickly,
  then~\eqref{eq:k-6-new} and~\eqref{eq:k-7-new} hold.
\end{lemma}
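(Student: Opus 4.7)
The strategy parallels that of Lemma~\ref{lem:ki-4-and-5}. The point is that at step $i$ the \emph{numerators} in \eqref{eq:k-6-new}--\eqref{eq:k-7-new} depend only on quantities attached to $\cA_i$ (hence are fixed once $\eps_i$ and $\p$ are chosen), whereas the \emph{denominators} involve $b_i(x)=a_0(x)\cdots a_{i-1}(x)$ and can be made arbitrarily large by enlarging $k_{i-1}$. So I would first bound the numerators uniformly, then lower-bound the denominators in terms of $k_{i-1}$, and finally incorporate the resulting lower bound on $k_{i-1}$ into the inductive construction.

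For the numerators: the collection $\{a_{i,s}(x):x\in\Gamma,\;1\le s\le N_i\}$ is contained in the finite family $\bigcup_{s=1}^{N_i}\cQ^s_{\Lambda_i}$, and each of its elements has strictly positive $m_i$-measure since $m_i$, being the geometric Gibbs state from Lemma~\ref{existence-m}, has full support on $\Lambda_i$. Hence $|\log m_i(a_{i,s}(x))|$ is uniformly bounded by some $K_i<\infty$. Together with $\big|\log|a_{i,s}(x)|\big|\le\log D+N_i\log\lambda$ from \eqref{smallLeb} and the fixed constants $\widetilde C_i$ and $\log D$, both numerators admit uniform bounds $\gamma_i^{(6)}$ and $\gamma_i^{(7)}$ that depend only on $\cA_i$.

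For the denominators: \eqref{smallLeb} gives $\big|\log|b_i(x)|\big|\ge t_i\log\lambda\ge n_{i-1}k_{i-1}\log\lambda$. Unfolding the recursion \eqref{eq:def-m-new} and using that $\log m_j(a_j)\le 0$ together with $0\le\log C_j(b_j)\le\widetilde C_j$ (the latter by \eqref{eq:def-Ci} and $m_j(F^{t_j}b_j)\le 1$), one obtains
\[
  |\log m(b_i(x))|\;\ge\;|\log m_{i-1}(a_{i-1}(x))|-\sum_{j=1}^{i-1}\widetilde C_j.
\]
Since $n_{i-1}k_{i-1}\ge N_{i-1}$ by \eqref{eq:k_i-1}, Lemma~\ref{lem:uni-geom-An} applies to $a_{i-1}(x)\in\cQ^{n_{i-1}k_{i-1}}_{\Lambda_{i-1}}$ and yields $|\log m_{i-1}(a_{i-1}(x))|\ge(1-\eps_{i-1})n_{i-1}k_{i-1}\log\lambda$. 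Since $\sum_{j=1}^{i-1}\widetilde C_j$ is fixed once $k_0,\ldots,k_{i-2}$ have been chosen, both denominators are bounded below by a positive multiple of $k_{i-1}$ minus a fixed error.

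To conclude, I would build $(k_i)_i$ inductively: having already fixed $k_0,\ldots,k_{i-2}$ to satisfy all earlier conditions (in particular those imposed by Lemma~\ref{lem:ki-4-and-5}), I would take $k_{i-1}$ large enough that also $\gamma_i^{(6)}/|\log m(b_i(x))|<\eps_{i-1}$ and $\gamma_i^{(7)}/\big|\log|b_i(x)|\big|<\eps_{i-1}$ for every $x\in\Gamma$; the suprema are realized on the finite set $\cB_i\subset\cQ^{t_i}$, so such a choice exists. The only subtlety worth flagging is a \emph{causal} one: at step $i$, conditions \eqref{eq:k-6-new}--\eqref{eq:k-7-new} actually constrain $k_{i-1}$ (via the size and $m$-measure of $b_i$), not $k_i$. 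Recognising this shift of index is what makes the joint inductive construction with Lemma~\ref{lem:ki-4-and-5} consistent, and is the main conceptual point of the argument.
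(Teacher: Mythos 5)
Your proposal is correct and takes essentially the same route as the paper: the numerators in \eqref{eq:k-6-new}--\eqref{eq:k-7-new} are bounded independently of the sequence $(k_i)_i$, the denominators grow at least like a positive multiple of $k_{i-1}$ (using \eqref{smallLeb} and the geometric estimate \eqref{eq:geom-meas-uni}), and the conditions indexed by $i$ are absorbed into the recursive choice of $k_{i-1}$, exactly the ``grows sufficiently quickly'' mechanism of the paper. The only minor difference is that you lower-bound $|\log m(b_i)|$ directly by unfolding the definition \eqref{eq:def-m-new} and applying \eqref{eq:geom-meas-uni} to $a_{i-1}$, whereas the paper obtains the analogous bound from Lemma~\ref{lem:bi-1} after assuming \eqref{eq:k-4-new} and \eqref{eq:k-5-new} already hold; both yield the required lower bound proportional to $n_{i-1}k_{i-1}\log\lambda$.
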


\begin{proof}
  Suppose that $(k_i)_i$ is chosen to grow fast enough so that~\eqref{eq:k-4-new} and~\eqref{eq:k-5-new} hold by Lemma~\ref{lem:ki-4-and-5}.
  The top of the fractions in~\eqref{eq:k-6-new} and~\eqref{eq:k-7-new} do not depend on the choice of sequence $(k_i)_i$. On the other hand we know that
  \[
    \big| \log |b_i (x)|\big|
    \ge n_{ i - 1 }k_{i - 1 }\log \lambda.
  \]
  Moreover, Lemma~\ref{lem:bi-1} tells us that then
  \[
    \big| \log |b_i (x)|\big|
    \ge ( 1 - \gamma \eps_{ i - 1 } )
    n_{ i - 1 }k_{i - 1 }\log \lambda
    .
  \]
  So, choosing $(k_i)_{i}$ to grow sufficiently quickly, we can ensure that~\eqref{eq:k-6-new} and~\eqref{eq:k-7-new} hold.
\end{proof}

\begin{proof}
  [Proof of Proposition~\ref{prop:ki}]
  Conditions~\eqref{eq:k_i-1} and~\eqref{eq:k_i-2} are satisfied whenever $k_i$ is chosen to be large enough. Moreover, the top of the fraction in~\eqref{eq:k_i-3} depends only on $k_0,\ldots,k_{i-1}$ and not on $k_i$. These observations together with Lemmas~\ref{lem:ki-4-and-5} and~\ref{lem:ki-6-and-7} ensure that $(k_i)_i$ can be chosen such that~\eqref{eq:k_i-1}--~\eqref{eq:k-7-new} hold. This completes the proof.
\end{proof}

For the remainder of this section we fix $(k_i)_i$ satisfying~\eqref{eq:k_i-1}--\eqref{eq:k-7-new} and fix the corresponding choices of $\cB_i$, $\Gamma$, and $m$.

\subsection{Limit behaviour of points in $\Gamma$}
\label{sec:subset}

\begin{lemma}\label{lem:Vx-np}
  For all $x\in \Gamma$, $\cV(x) = \nu_{ \p}$.
\end{lemma}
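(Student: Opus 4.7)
By Corollary~\ref{cor:coding-single-point}, it suffices to show $\vrt_k/\tau_k(x)\to\p$ for every $x \in \Gamma$. Fix such an $x$ and, for $k \ge t_1$, write $k = t_i + s$ with $0 \le s < n_i k_i$, so that $x \in b_i a_{i,s}$ with $b_i = a_0\cdots a_{i-1}$ and $a_{i,s} \in \cQ_{\Lambda_i}^{s}$. The strategy is first to estimate the return-time ratio on the completed prefix $b_i$, and then to patch on the unfinished tail $a_{i,s}$, splitting into cases according to whether $s$ is large enough to apply Lemma~\ref{lem:large-dev-tau} to $a_{i,s}$.

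For the prefix estimate, Lemma~\ref{lem:large-dev-tau} applied to $\cA_{i-1}$ (valid since $n_{i-1}k_{i-1} > N_{i-1}$ by~\eqref{eq:k_i-1}) yields $\vrt_{n_{i-1}k_{i-1}}(a_{i-1})/\tau_{n_{i-1}k_{i-1}}(a_{i-1}) \in B_{\eps_{i-1}}(\p)$. From the definition~\eqref{eq:def-M} of $M_j$ one has $\tau_{n_j k_j}(a_j) \le n_j k_j M_j$, so $\tau_{t_{i-1}}(b_{i-1}) \le \sum_{j=0}^{i-2} M_j n_j k_j$, while $\tau_{n_{i-1}k_{i-1}}(a_{i-1}) \ge n_{i-1}k_{i-1}$. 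Condition~\eqref{eq:k_i-3} at index $i-1$ then gives $\tau_{t_{i-1}}(b_{i-1})/\tau_{n_{i-1}k_{i-1}}(a_{i-1}) < \eps_{i-1}$, and Lemma~\ref{lemlinalgebra} item~(\ref{eq:n+k-with-k}) yields $\vrt_{t_i}/\tau_{t_i}(b_i) \in B_{3\eps_{i-1}}(\p)$.

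For the tail, I consider two cases. If $s > N_i$, Lemma~\ref{lem:large-dev-tau} applied to $\cA_i$ gives $\vrt_s(a_{i,s})/\tau_s(a_{i,s}) \in B_{\eps_i}(\p)$, and the componentwise min-max bound in Lemma~\ref{lemlinalgebra} item~(\ref{eq:n+k-with-min-max}) sandwiches $\vrt_k/\tau_k(b_i a_{i,s})$ between the two ratios, placing it in $B_{\max(3\eps_{i-1},\eps_i)}(\p)$. If $s \le N_i$, condition~\eqref{eq:k_i-2} at index $i-1$ gives $N_iM_i/t_i < \eps_{i-1}$, hence $\tau_s(a_{i,s}) \le sM_i \le N_iM_i < \eps_{i-1}t_i \le \eps_{i-1}\tau_{t_i}(b_i)$; then Lemma~\ref{lemlinalgebra} item~(\ref{eq:n+m-with-n}) yields $|\vrt_k/\tau_k(b_ia_{i,s}) - \vrt_{t_i}/\tau_{t_i}(b_i)| < 2\eps_{i-1}$, so $\vrt_k/\tau_k(x) \in B_{5\eps_{i-1}}(\p)$. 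In both cases the bound vanishes as $k\to\infty$ (equivalently $i\to\infty$), proving the claim.

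The main subtlety is the second case: the tail $a_{i,s}$ has no a priori control on its return-time ratio, and condition~\eqref{eq:k_i-2} is precisely what guarantees that this uncontrolled tail of length at most $N_i$ contributes a negligible amount of time compared to the already-stabilized prefix $b_i$, so that the ratio on $b_i a_{i,s}$ cannot drift far from its value on $b_i$.
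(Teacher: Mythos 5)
Your proposal is correct and follows essentially the same route as the paper: reduce via Corollary~\ref{cor:coding-single-point}, control the ratio at the block times $t_i$ using~\eqref{eq:k_i-3} and Lemma~\ref{lemlinalgebra}~(\ref{eq:n+k-with-k}), then treat intermediate times by the same two cases ($s\le N_i$ via~\eqref{eq:k_i-2} and item~(\ref{eq:n+m-with-n}); $s>N_i$ via the min--max bound~(\ref{eq:n+k-with-min-max})), with only an index shift relative to the paper's presentation.
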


\begin{proof}
Let $x \in \Gamma$.
By Corollary~\ref{cor:coding-single-point}, it is enough to show that $\cT(x) = \p$.

Recall the definition of the associated cylinders $a_{i,s} \in \cQ_{\Lambda_i}^{s}$, $a_{i} \in \cQ_{\Lambda_i}^{n_ik_i}$ and $b_i \in \cB_i$ in~\eqref{eq:def-ai-bi-ais-1} and~\eqref{eq:def-ai-bi-ais-2}.
Recall from Proposition~\ref{prop:approx-new} that
\begin{equation}\label{eq:large-dev-tau-1}
  | ( \vrt_{ s }   / \tau_{ s } )(a_{i,s}) - \p |<\eps_i
   \quad\text{ for all } N_i\le s \le n_ik_i.
\end{equation}

We also make use of the following inequalities which come from the definition~\eqref{eq:def-M} and the fact that $\tau \ge 1$
\begin{equation}
  \label{eq:tau-trivial}
  t_i \le \tau_{t_i} ( b_i ) \le \sum_{ j = 0 }^{ i - 1 }
   M_j n_j k_j,
  \quand
  s  \le \tau_s ( a_{ i, s} )
  \le s M_i
  ,
\end{equation}
for all $s = 1,\ldots, n_i k_i$.

  Recalling that $b_i = a_0 \cdots a_{i - 1}$,
  we can write $b_{ i + 1 } = b_i a_i$. Then, from 
  Lemma~\ref{lemlinalgebra} item~\eqref{eq:n+k-with-k} together with~\eqref{eq:tau-trivial}
  and~\eqref{eq:k_i-3},
  \begin{align*}
    \left|
    \frac{ \vrt_{t_{i + 1}} }{ \tau_{ t_{ i + 1 } } } (b_{ i + 1 }) -
    \frac{ \vrt_{ n_{i }k_{i}  } }{\tau_{ n_{i }k_{i} } } (a_{i} )
    \right|
    \le \frac{ 2 \tau_{t_i}(b_i)}{\tau_{t_i}(b_i) + \tau_{n_{i}k_{i}} ( a_{i} )}
    &< \frac{ 2\sum_{ j = 0 }^{ i - 1 }  M_{ j }n_{ j }k_{ j }  }{ n_{ i } k_{ i } } 
    < 2 \eps_{ i }
    .
  \end{align*}
  So, using~\eqref{eq:large-dev-tau-1} with $s = n_{i}k_{ i }$, we obtain
  \begin{equation}
    \label{eq:good-at-somepoint}
    (\vrt_{ t_{ i + 1 } } / \tau_{ t_{ i + 1 } } ) (b_{ i + 1 }) \in B_{3\eps_{ i }}(\p),
  \quad \text{for all } i \ge 0
  .
  \end{equation}

  We now fix $i \ge 1$ and consider the intermediate times $t_{i} + s$ where $s= 1 ,\ldots, n_{ i } k_i - 1$.
  Let us first consider the range $s = 1 , \ldots , N_{i} - 1$. By Lemma~\ref{lemlinalgebra} item~\eqref{eq:n+m-with-n} together with~\eqref{eq:tau-trivial} and~\eqref{eq:k_i-2} we have that
  \begin{align*}
    \left|
    \frac{ \vrt_{  t_{ i }  + s }  }{ \tau_{  t_{ i } + s } }( b_i a_{i, s} ) 
    -
    \frac{ \vrt_{ t_{ i } } }{ \tau_{ t_{ i } }  }(b_i)
    \right|
    \le 2\frac{ \tau_s ( a_{i,s} ) }{ \tau_{t_i}( b_i )+
    \tau_s ( a_{i,s })}
    &\le \frac{  2N_{ i } M_{ i } }{ t_{ i } }
    < 2 \eps_{ i - 1 },
  \end{align*}
  So, using~\eqref{eq:good-at-somepoint}, we know that
  \[
    ( \vrt_{ t_i + s }  / \tau_{ t_i + s } ) ( b_i a_{ i , s } )
    \in B_{ 5 \eps_{ i- 1} } ( \p )
    .
  \]
  Now let us consider the range $s = N_{ i }, \ldots, n_{i} k_{i} - 1$. By Lemma~\ref{lemlinalgebra} item~\eqref{eq:n+k-with-min-max} we know that 
   \[
     \min\left\{
      \frac{ \vrt_{ t_{ i } }}{ \tau_{ t_{ i } }  }(b_i),
      \frac{ \vrt_{ s }  }{ \tau_{ s }  }( a_{ i, s } )
    \right\}
    \le
        \frac{ \vrt_{  t_{ i } +s } }{ \tau_{  t_{ i } +s}  }(b_i a_{i,s})
    \le
    \max\left\{
      \frac{ \vrt_{ t_{ i } } }{ \tau_{ t_{ i } }  }(b_i),
      \frac{ \vrt_{ s }  }{ \tau_{ s }  }( a_{ i, s } )
    \right\}.
 \]
 Thus, by~\eqref{eq:large-dev-tau} and~\eqref{eq:good-at-somepoint} we find that
 \[
   (\vrt_{  t_{ i } +s }/ \tau_{  t_{ i } +s} ) (b_i a_{i,s})  \in B_{ \delta_i } ( \p ),
 \]
where $\delta_i = \max\{ 3\eps_{ i - 1}, \eps_{i} \}$. As we assume that $\eps_i\to0$, this proves $\cT(x) = \p$. This finishes the proof of the lemma.
\end{proof}

\subsection{Hausdorff dimension of $\Gamma$}
\label{sec:dim}

\begin{lemma}\label{lem:dim}
  $
    \dim_{\rm H} \Gamma  = 1
    .
  $
\end{lemma}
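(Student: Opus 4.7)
The plan is to prove $\dim_{\rm H} \Gamma = 1$ via the mass distribution principle. Since $\dim_{\rm H}\Gamma \le \dim_{\rm H} X = 1$ is automatic, it suffices to show that the Borel probability measure $m$ built in Section~\ref{sec:const-m} satisfies the following: for every $\eta \in (0,1)$ there exist $K = K(\eta) > 0$ and $r_0(\eta) > 0$ such that
\[
m(B_r(x)) \le K\, r^{1-\eta}
\quad\text{for every }x\in\Gamma\text{ and every }0 < r < r_0(\eta).
\]
As $m(\Gamma)=1$, this yields $\dim_{\rm H}\Gamma \ge 1-\eta$, and letting $\eta \to 0$ gives $\dim_{\rm H} \Gamma = 1$.

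The main step is the following cylinder estimate: for every $\eta > 0$ there is $L = L(\eta)$ such that $m(a) \le |a|^{1-\eta}$ for every $\ell \ge L$ and every $a \in \cQ_\Gamma^\ell$. For each $x \in \Gamma$, the level-$\ell$ cylinder $a \in \cQ_\Gamma^\ell$ containing $x$ equals $b_i(x)\, a_{i,s}(x)$, where $\ell = t_i + s$ and $0 \le s \le n_i k_i$, and has mass $m(a) = C_i(b_i)\, m(b_i)\, m_i(a_{i,s})$. Since $\log C_i(b_i) \in [0,\widetilde C_i]$, both $\log m(b_i), \log m_i(a_{i,s}) \le 0$, and the bounded distortion~\eqref{smallLeb} yields $|\log|b_i a_{i,s}|| \le |\log|b_i|| + |\log|a_{i,s}|| + \log D$, the inequality $m(a) \le |a|^{1-\eta}$ reduces to the lower bound
\[
\frac{|\log m(b_i)| + |\log m_i(a_{i,s})| - \widetilde C_i}{|\log|b_i|| + |\log|a_{i,s}|| + \log D} \;\ge\; 1 - \eta.
\]

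This is established in two regimes. When $s \le N_i$, conditions~\eqref{eq:k-6-new} and~\eqref{eq:k-7-new} give $|\log m_i(a_{i,s})| + \widetilde C_i \le \eps_{i-1}|\log m(b_i)|$ and $|\log|a_{i,s}|| + \log D \le \eps_{i-1}|\log|b_i||$, so the ratio differs from $|\log m(b_i)|/|\log|b_i||$ by $O(\eps_{i-1})$; it then tends to $1$ by Lemma~\ref{lem:bi-1}. When $s > N_i$, Proposition~\ref{prop:approx-new} yields $|\log m_i(a_{i,s})|/|\log|a_{i,s}|| \in B_{\eps_i}(1)$, Lemma~\ref{lem:bi-1} gives $|\log m(b_i)|/|\log|b_i|| \to 1$, and~\eqref{eq:k-6-new} still forces $\widetilde C_i < \eps_{i-1}|\log m(b_i)|$; together these show the ratio is at least $1 - O(\max(\eps_{i-1}, \eps_i))$ provided $|\log|b_i||$ is large enough that the additive constant $\log D$ is negligible. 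Since $\ell \to \infty$ forces $i \to \infty$ (as $t_i \to \infty$) and hence $\eps_{i-1}, \eps_i \to 0$, the cylinder estimate follows for $\ell$ sufficiently large.

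The passage from cylinders to balls is routine. For small $r > 0$, choose $\ell$ with $\lambda^{-\ell} \le r < \lambda^{-(\ell-1)}$. By~\eqref{smallLeb} every element of $\cQ^\ell$ has length in $[D^{-1}\lambda^{-\ell}, \lambda^{-\ell}] \subset (0, r]$, and the interval $B_r(x)$, of length at most $2r$, intersects at most a constant number $K_0 = K_0(D,\lambda)$ of them. An element $a \in \cQ^\ell$ meeting $B_r(x)$ is either disjoint from $\Gamma$, contributing nothing to $m(B_r(x))$, or else lies in $\cQ_\Gamma^\ell$ and, for $\ell \ge L(\eta)$, satisfies $m(a) \le |a|^{1-\eta} \le r^{1-\eta}$. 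Summing yields $m(B_r(x)) \le K_0\, r^{1-\eta}$, as required. The principal obstacle is the two-regime cylinder estimate: the product formula for $m$ introduces the residual error terms $\log C_i(b_i)$, $\log D$, and $\log m_i(a_{i,s})$ at short cylinders, and conditions~\eqref{eq:k_i-1}--\eqref{eq:k-7-new} on the sequence $(k_i)_i$ were arranged precisely so that each of these residuals is negligible against the dominant logarithmic quantities in both regimes.
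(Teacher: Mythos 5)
Your proposal is correct and follows essentially the same route as the paper: the heart of the argument is the identical two-regime estimate on cylinders $b_i a_{i,s}$ (using the product formula \eqref{eq:def-m-new}, the distortion constant $D$, the constants $\widetilde C_i$, conditions \eqref{eq:k-6-new}--\eqref{eq:k-7-new} for $s\le N_i$, and \eqref{eq:geom-meas-uni} together with Lemma~\ref{lem:bi-1} for $s\ge N_i$), followed by the mass distribution principle. The only difference is presentational: you phrase the estimate as a uniform Frostman bound $m(a)\le |a|^{1-\eta}$ and spell out the cylinder-to-ball covering step via \eqref{smallLeb}, whereas the paper states the pointwise cylinder local dimension and compresses that final step into a citation of the mass distribution principle.
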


\begin{proof}
   Let $x\in \Gamma$. 
  Recall the definition of the associated cylinders $a_{i,s} \in \cQ_{\Lambda_i}^{s}$, $a_{i}\in \cQ_{\Lambda_i}^{n_ik_i}$ and $b_i\in \cB_i$ in~\eqref{eq:def-ai-bi-ais-1} and~\eqref{eq:def-ai-bi-ais-2}.
  We will show that
  \begin{equation}\label{eq:dim-claim}
    \lim_{i\to\infty}
    \inf_{s = 0 ,\ldots, n_i k_i - 1} \frac{ \log m ( b_i a_{i,s} ) }{ \log |b_i a_{i,s} |}
    =
    \lim_{ i \to \infty }\sup_{s = 0 ,\ldots, n_i k_i - 1} \frac{ \log m ( b_i a_{i,s} ) }{ \log |b_i a_{i,s} |}
    = 1.
  \end{equation}

  Set
  \[
    \gamma_i \eqdef \frac{ \log m (b_i)} { \log |b_i|},
  \]
  and recall from Lemma~\ref{lem:bi-1} that $\lim_{i\to\infty} \gamma_i = 1$
  Consider first the range $s = 0, \ldots, N_i$.
  We proceed as in the proof of Lemma~\ref{lem:bi-1}.
  Using the definition~\eqref{eq:def-m-new}, the bounded distortion (see Remark~\ref{rem:dist-exp})
  \begin{align}
    \nonumber
    \frac{
      \log m ( b_i a_{i,s})
    }
    {
      \log |b_i a_{i,s}|
    }
    &\ge 
    \frac{ \log m ( b_i ) + \log m_i (a_{i,s}) + \log C_i (b_i) }
    { \log |b_{i}|  + \log |a_{i,s}| - \log D } \\
    \nonumber
    &\ge
    \frac{ \log m  (b_i) }{ \log |b_i| }
    \left(
      \frac{ 1
       + \frac{ \log m ( a_{i,s} ) + \widetilde C_{i} }{ \log m (b_i) }
      }{ 1
       + \frac{ \log |a_{i,s}| - \log D }{ \log |b_i|}
      }
    \right) \\
    \label{eq:dim-s-small}
  &\ge
  \gamma_i \frac{ 1 - \eps_{ i - 1} }{ 1 + \eps_{ i - 1 } }.
  \end{align}
  where in the final inequality we have
  used~\eqref{eq:k-6-new} and~\eqref{eq:k-7-new}.

  Now let us consider the range $s = N_i, \ldots ,n_{i}k_i$. We begin as before, but this time we make use of~\eqref{eq:abcd-clever} to obtain
  \begin{align}
    \nonumber
    \frac{
      \log m ( b_i a_{i,s})
    }
    {
      \log |b_i a_{i,s}|
    }
    &\ge 
    \frac{ \log m ( b_i ) + \log m_i (a_{i,s}) + \widetilde C_i }
    { \log |b_{i}|  + \log |a_{i,s}| - \log D } \\
    \label{eq:dim-s-big-1}
    &\ge
    \min\left\{
    \frac{ \log m ( b_i ) + \widetilde C_i }
    { \log |b_{i}|  - \log D },
    \frac{ \log m_i (a_{i,s}) }
    {  \log |a_{i,s}| } 
    \right\}
  \end{align}
  Then we notice, using~\eqref{eq:k-6-new} and~\eqref{eq:k-7-new}, that
  \begin{equation}
    \label{eq:dim-s-big-2}
    \frac{ \log m ( b_i ) + \widetilde C_i }
    { \log |b_{i}|  - \log D }
    =
    \frac{ \log m ( b_i )}
    { \log |b_{i}|  }
    \left(
    \frac{ 1 + \frac{\widetilde C_i}{\log m ( b_i )} }
    { 1 + \frac{\log D }{\log |b_{i}|}}
    \right)
    \ge \gamma_i \frac{ 1 - \eps_{ i - 1 } }{ 1 + \eps_{ i - 1 } }.
  \end{equation}
  Recalling~\eqref{eq:geom-meas-uni}, we have $\log m_i (a_{i,s}) / \log |a_{i,s}| \ge 1 - \eps_i$. Combining this~\eqref{eq:dim-s-small},~\eqref{eq:dim-s-big-1}, and~\eqref{eq:dim-s-big-2}, we obtain
  \[
    \inf_{s = 0 ,\ldots, n_i k_i - 1} \frac{ \log m ( b_i a_{i,s} ) }{ \log |b_i a_{i,s}|} \ge \underline{\delta}_i,
    \quad
    \text{where }
    \underline{\delta}_i \eqdef
    \min\left\{
    \gamma_i \frac{ 1 - \eps_{ i - 1 }} {1  + \eps_{i - 1 }},
    1  - \eps_i
    \right\}.
  \]
  Proceeding in the same as above one can find analogous upper bounds for~\eqref{eq:dim-s-small},~\eqref{eq:dim-s-big-1}, and~\eqref{eq:dim-s-big-2}, which yield
  \[
    \sup_{s = 0 ,\ldots, n_i k_i - 1} \frac{ \log m ( b_i a_{i,s} ) }{ \log |b_i a_{i,s}|} \le \overline{\delta}_i,
    \quad
    \text{where }
    \overline{\delta}_i \eqdef
    \max\left\{
    \gamma_i \frac{ 1 + \eps_{ i - 1 }} {1  -  \eps_{i - 1 }},
    1  + \eps_i
    \right\}.
  \]
  As $\lim_{i \to \infty} \underline{\delta}_i = \lim_{ i \to \infty } \overline{\delta}_i = 1$ we obtain~\eqref{eq:dim-claim}.

  Denoting by $\cQ^\ell(x)$ the element of the partition $\cQ^\ell$ which contains $x$, from the above we get
  \[
    \lim_{\ell \to \infty}\frac{ \log m ( \cQ^{\ell}(x) ) }{ \log|\cQ^{\ell}(x)|} = 1.
  \]
  As the refinements of the partition $\cQ$ separate points, the assertion of the lemma then follows from the mass distribution principle.
\end{proof}

\section{Proof of Theorem~\ref{thm:dim-H-C}}
\label{sec:bridging-again}

In this section we prove Theorem~\ref{thm:dim-H-C}. We will follow the same structure as for the proof of the special case in Section~\ref{sec:bridging}. The only significant difference is how the sets $\cA_{i}$ are chosen in the construction of the fractal set $\Gamma$. Throughout this section we fix a closed  connected subset $\cC \subset \cS$. In Section~\ref{sec:const-gamma-2} we construct a set $\Gamma$. In Section~\ref{sec:subset-2} we show that $\Gamma \subset \{ x \colon  \cV(x) = \pi \cC \}$. In Section \ref{sec73}, we conclude $\dim_{\rm H} \Gamma = 1$ which proves Theorem~\ref{thm:dim-H-C}.

\subsection{Construction of a fractal set $\Gamma$}
\label{sec:const-gamma-2}

Let $(\p_i)_{i\ge 0} \subset \cC$ be a sequence whose limit points are equal to $\cC$ and so that $\lim_{i \to \infty }| \p_{i} - \p_{i + 1 } | = 0$.
Let $(\eps_i)_{i\ge 0} \subset (0,1)$ be such that $\lim \eps_i = 0$. For each $i\ge 0$ let $n_i \eqdef n_i (\eps_i,\p_i )$, $\cA_i = \cA( \eps_i ,\p_i) \subset \cQ^{n_i}$, $\Lambda_i = \Lambda( \cA_i )$, $N_i = N( \eps_i,\p_i)$, and $m_i = m ( \eps_i,\p_i)$ be as given in Proposition~\ref{prop:approx-new}.
With these definitions in place we define $M_i,E_i$ as in~\eqref{eq:def-M}. Then, we may repeat verbatim the proof of Proposition~\ref{prop:ki} to give the existence of a sequence $(k_i)_{i \ge 0}$ satisfying the conditions~\eqref{eq:k_i-1}--\eqref{eq:k-7-new}. Fixing such a sequence $(k_i)_{i}$ we set
\[
  t_{ 0 } \eqdef 0,
  \quand
  t_{ i } \eqdef
  n_0 k_0+\cdots+n_{i-1}k_{i-1}
  ,
\]
and define
\[
  \cB_{ i } \eqdef \bigvee_{ j = 0 }^{ i - 1 } F^{ -t_{ j } } \cQ_{\Lambda_{ j }}^{ k_j n_j }
  \quand
  \Gamma \eqdef
  \bigcap_{ i \in\bN }
  \bigcup_{ b \in \cB_{i} } b
  .
\]
Repeating the proof of Lemma~\ref{lem:nonempty-compact} we see that $\Gamma$ is nonempty and compact. Following Section~\ref{sec:const-m} we construct a probability measure $m$ on $\Gamma$ exactly as before. 

\subsection{Limit behaviour of points in $\Gamma$}
\label{sec:subset-2}

\begin{lemma}
  \label{lem:large-dev-tau-again}
  $\cV(x) = \pi\cC$ for every $x\in \Gamma$.
\end{lemma}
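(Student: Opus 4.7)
The plan is to apply Theorem~\ref{thm:coding} with the identification $\cT(x)=\cC$, which reduces the task to two sub-claims: (i) verifying the ``no-jump'' condition \eqref{eqcondtaus}, and (ii) showing that the set of limit points of $(\vrt_k/\tau_k)(x)$ is exactly $\cC$. Both will be obtained by adapting, block-by-block, the argument of Lemma~\ref{lem:Vx-np}, keeping in mind that $\cA_i$ now depends on $\p_i$ rather than a fixed $\p$.

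For (i), applying the identity \eqref{eq:abcd-trivial} with $\widebar x=\vrt_k$, $v=\tau_k$, $\widebar y=\vrt\circ F^k$, $w=\tau\circ F^k$ and using that each coordinate of $\vrt/\tau$ lies in $[0,1]$, I would bound
\[
\left|\frac{\vrt_{k+1}}{\tau_{k+1}}(x) - \frac{\vrt_k}{\tau_k}(x)\right| \le \frac{2\,\tau(F^k x)}{\tau_{k+1}(x)} \le \frac{2 M_i}{k}, \qquad k\in[t_i,t_{i+1}),
\]
since $F^k x$ lies in some one-cylinder contained in an element of $\cA_i$, so $\tau(F^k x)\le M_i$. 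Because $k\ge t_i$ and $M_i/t_i\le N_i M_i/t_i<\eps_{i-1}\to 0$ by \eqref{eq:k_i-2}, this difference vanishes and \eqref{eqcondtaus} holds.

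For (ii), Proposition~\ref{prop:approx-new} applied to $(\eps_i,\p_i)$ yields the block-level analogue of \eqref{eq:large-dev-tau-1}, namely $(\vrt_s/\tau_s)(a_{i,s})\in B_{\eps_i}(\p_i)$ for $N_i\le s\le n_ik_i$. Repeating verbatim the three calculations in the proof of Lemma~\ref{lem:Vx-np} gives: $(\vrt_{t_{i+1}}/\tau_{t_{i+1}})(b_{i+1})\in B_{3\eps_i}(\p_i)$; for $s\in\{1,\dots,N_i-1\}$, Lemma~\ref{lemlinalgebra}\eqref{eq:n+m-with-n} and \eqref{eq:k_i-2} force $(\vrt_{t_i+s}/\tau_{t_i+s})(b_ia_{i,s})\in B_{5\eps_{i-1}}(\p_{i-1})$; and for $s\in\{N_i,\dots,n_ik_i\}$, the sandwiching in Lemma~\ref{lemlinalgebra}\eqref{eq:n+k-with-min-max} places the ratio coordinate-wise between a point $3\eps_{i-1}$-close to $\p_{i-1}$ and a point $\eps_i$-close to $\p_i$. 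Setting $\delta_i\eqdef\max\{5\eps_{i-1},\eps_i\}+|\p_i-\p_{i-1}|$, which tends to $0$, I conclude that for every $k\ge t_i$ the ratio lies in $B_{\delta_i}(\{\p_{i-1},\p_i\})\subset B_{\delta_i}(\cC)$. Since $\cC$ is closed, this yields $\cT(x)\subset\cC$. The reverse inclusion is immediate from the choice of $(\p_i)$: any $\q\in\cC$ is a limit of some subsequence $\p_{i_\ell}$, and the values $(\vrt_{t_{i_\ell+1}}/\tau_{t_{i_\ell+1}})(b_{i_\ell+1})$ then converge to $\q$, so $\q\in\cT(x)$.

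Having established (i) and (ii), the hypothesis of Theorem~\ref{thm:coding} is satisfied (recall $\cC\subset\cS$ is closed and connected by assumption), and we conclude $\cV(x)=\pi(\cT(x))=\pi\cC$. The only genuinely new point compared to Lemma~\ref{lem:Vx-np} is the drift estimate between consecutive target points $\p_{i-1}$ and $\p_i$; this is not a serious obstacle because the condition $|\p_i-\p_{i-1}|\to 0$ was built into the construction in Section~\ref{sec:const-gamma-2}, and because the sandwich property of Lemma~\ref{lemlinalgebra}\eqref{eq:n+k-with-min-max} confines the intermediate values to a shrinking neighbourhood of $\{\p_{i-1},\p_i\}\subset\cC$.
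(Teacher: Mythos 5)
Your proof is correct and follows essentially the same route as the paper: Proposition~\ref{prop:approx-new} applied to $(\eps_i,\p_i)$, the same three block estimates from Lemma~\ref{lemlinalgebra} (at times $t_{i+1}$, at $s<N_i$, and the coordinate-wise sandwich for $s\ge N_i$), and then Theorem~\ref{thm:coding}. The only cosmetic differences are that the paper records the confinement of intermediate ratios via $\delta_i$-neighbourhoods of the rectangles with vertices $\p_{i-1},\p_i$ and deduces \eqref{eqcondtaus} from $2\diam(R_i^{\delta_i})\to0$, whereas you verify \eqref{eqcondtaus} by the equally valid one-step bound $2\tau(F^kx)/\tau_{k+1}(x)\le 2M_i/t_i<2\eps_{i-1}$; also your stated radius $\delta_i=\max\{5\eps_{i-1},\eps_i\}+|\p_i-\p_{i-1}|$ is slightly too small (the sandwich argument gives something like $6\eps_{i-1}+\eps_i+|\p_i-\p_{i-1}|$), which is immaterial since all terms tend to zero.
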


\begin{proof}
Let $x \in \Gamma$.
From Theorem~\ref{thm:coding} it is enough to show that $\cT(x) = \cC$ and \eqref{eqcondtaus} holds.

Recall the definition of the cylinders $a_{i,s}(x) \in \cQ_{\Lambda_i}^{s}$, $a_{i}(x) \in \cQ_{\Lambda_i}^{n_ik_i}(x)$ and $b_i(x) \in \cB_i$ in~\eqref{eq:def-ai-bi-ais-1} and~\eqref{eq:def-ai-bi-ais-2} (see also Figure~\ref{fig:Gamma}).
We suppress the dependence of $a_{i,s},a_i$ and $b_i$ on $x$ to ease notation.
Recall from item~(\ref{itm:1-def-An}) of Proposition~\ref{prop:approx-new} that
\begin{equation}\label{eq:large-dev-tau-2}
  | ( \vrt_{ s }   / \tau_{ s } )(a_{i,s}) - \p_i |<\eps_i
   \quad\text{ for all } N_i\le s \le n_ik_i.
\end{equation}

  Writing $b_{i+1} = b_i a_{i}$, it follows from Lemma~\ref{lemlinalgebra} item~\eqref{eq:n+k-with-k} together with~\eqref{eq:tau-trivial}
  and~\eqref{eq:k_i-3} that
  \begin{align*}
    \left|
    \frac{ \vrt_{t_{i+1}} }{ \tau_{ t_{ i+1 } } }
    (b_{i+1})
    -
    \frac{ \vrt_{ n_{i }k_{i}  }  }{\tau_{ n_{i} k_{i}}}
    (a_{i} )
    \right|
    &\le \frac{ 2 \tau_{t_i}(b_{i})}{\tau_{t_i}(b_{i}) + \tau_{n_{i}k_{i}} ( a_{i} )}
    < \frac{ 2\sum_{ j = 0 }^{ i - 1 } k_{ j } n_{ j } M_{ j } }{ n_{ i } k_{ i } }
    < 2 \eps_{ i }
    .
  \end{align*}
  So, using~\eqref{eq:large-dev-tau-2} with $s = n_{i}k_{ i }$, we obtain
  \begin{equation}
    \label{eq:good-at-somepoint-again}
    (\vrt_{ t_{ i + 1 }} / \tau_{ t_{ i + 1} } )(b_{ i + 1 }) \in B_{3\eps_{ i }}(\p_i),
  \quad \text{for all } i \ge 0
  .
  \end{equation}
  This gives the conclusion of the lemma along the subsequence $t_i$, in the sense that
  \[
    \big\{ \text{limit points of } (\vrt_{t_i}(x)/\tau_{t_i}(x))_{i \in\bN}\big\} 
    = \big\{ \text{limit points of } (\p_i)_{i\in\bN} \big\} = \cC.
  \]

  We now fix $i \ge 1$ and consider the intermediate times $t_{i} + s$ where $s= 1 ,\ldots, n_{i}k_{i} - 1$.
  Let us first consider the range $s = 1 , \ldots , N_{i} - 1$. By Lemma~\ref{lemlinalgebra} item~\eqref{eq:n+m-with-n} together with~\eqref{eq:k_i-2} and~\eqref{eq:tau-trivial} we have that
  \begin{align*}
    \left|
    \frac{ \vrt_{  t_{ i }  + s }  }{ \tau_{  t_{ i } + s }  }
    ( b_i a_{i, s} )    -
    \frac{ \vrt_{ t_{ i } }  }{ \tau_{ t_{ i } }  }(b_i)
    \right|
    \le 2\frac{ \tau_s ( a_{i,s} ) }{ \tau_{t_i}( b_i )+
    \tau_s ( a_{i,s })}
    &\le \frac{  2N_{ i  } M_{ i  } }{ t_{ i } }
    < 2 \eps_{ i - 1 },
  \end{align*}
  So, using~\eqref{eq:good-at-somepoint}, we know that
  \[
   \frac{ \vrt_{ t_i + s } }{ \tau_{ t_i + s }} ( b_i a_{ i  , s } )
    \in B_{ 5 \eps_{i - 1} } ( \p_{i-1} )
    .
  \]

  Now let us consider the range $s = N_{ i}, \ldots, n_{i} k_{i}-1,n_{i} k_{i}$. By Lemma~\ref{lemlinalgebra} item~\eqref{eq:n+k-with-min-max} we know that 
   \[
     \min\left\{
      \frac{ \vrt_{ t_{ i } } }{ \tau_{ t_{ i } }} (b_i) ,
      \frac{ \vrt_{ s }}{ \tau_{ s }} ( a_{ i, s } ) 
    \right\}
    \le
        \frac{ \vrt_{  t_{ i } +s }}{  \tau_{  t_{ i } +s}} (b_i a_{i,s}) 
    \le
    \max\left\{
      \frac{ \vrt_{ t_{ i } } }{ \tau_{ t_{ i } } }(b_i) ,
      \frac{ \vrt_{ s } }{ \tau_{ s }} ( a_{ i, s } ) 
    \right\},
 \]
 where the inequality holds coordinate-wise. Let
 $\delta_i = \max\{ 3\eps_{i-1}, 3\eps_{i} \}$ and recall from~\eqref{eq:good-at-somepoint-again} and~\eqref{eq:large-dev-tau-2} that $(\vrt_{ t_{ i } } / \tau_{ t_{ i } } ) (b_i) \in B_{\delta_i}(\p_{i-1})$ and
 $(\vrt_{ s } / \tau_{ s } ) (a_{i,s}) \in B_{\delta_i}(\p_{i})$. Thus,
 \begin{equation}\label{eqtausss}
  \frac{ \vrt_{  t_{ i } +s }  }{ \tau_{  t_{ i } +s}  }(b_i a_{i,s}) \in R_{i}^{\delta_i}
\end{equation}
where $R_i^{\delta_i}$ is the $\delta_i$-neighbourhood of the $d$-dimensional rectangle $R_i$
that has $\p_i$ and $\p_{i+1}$ as vertices and is given by
\[
  R_i^{\delta_i} \eqdef \left\{ (x_1,\ldots,x_d) \in \R^d \colon
  \min\left\{ p_{i-1}^{(j)}, p^{(j)}_{i}\right\} - \delta_i
  < x_j < \max\left\{ p^{(j)}_{i-1}, p^{(j)}_{i}  \right\} + \delta_i
  \right\}.
 \]
  As $\lim_{i \to \infty } | \p_i - \p_{i+1} | = 0$, and as $\lim_{i \to\infty} \delta_i = 0$, we know that the rectangles $R^{\delta_i}_{i}$ accumulate precisely at the limit points $\cC$ of $(\p_i)_i$. This concludes that $\cT(x) = \cC$.
  
  To check that \eqref{eqcondtaus} holds, let $k\in\bN$ and choose $i$ such that $t_i\le k< t_{i+1}$.  Hence, it follows from \eqref{eqtausss} that
  \[
  	\left|  \frac{ \vrt_{  k }  }{ \tau_{  k }}(x) - \frac{ \vrt_{  k +1}  }{ \tau_{  k +1}}(x)\right| 
	\le2\diam(R_i^{\delta_i}).
  \]
 As $k\to\infty$, we get $i\to\infty$ and hence the latter tends to $0$. 
This implies that \eqref{eqcondtaus} holds and hence proves the lemma.
\end{proof}

\subsection{Proof of Theorem~\ref{thm:dim-H-C}}\label{sec73}

In order to conclude the proof of Theorem~\ref{thm:dim-H-C} it remains only to show that 
$\dim_{\rm H} ( \Gamma ) = 1$. This follows verbatim from the proof of Lemma~\ref{lem:dim}.
\qed

\section{Examples}
\label{sec:examples}

We consider an interval map $f \colon X \to X$ that admits a Markov partition $\cP$ which has $d\ge 2$ neutral fixed points $\{\xi_1,\ldots, \xi_d\}$. At these neutral fixed points we will always assume that we have the following expansion
\begin{equation}\label{eq:expansion}
  \text{there exist } \alpha \in (0,1), \; b_1,\ldots, b_d > 0
  \quad
  \text{such that}
  \quad
  f (x) - \xi_i
  \sim b_i ( x  - \xi_i )^{ 1 + 1 / \alpha }.
\end{equation}
We assume that there is a partition $I_1,I_2,\ldots$ of $X$ (modulo $\Leb$) into $d + d'$ open sub-intervals, where $0  \le d' \le + \infty$ such that each $I_i$ is a union of elements in $\cP$ and each restriction $f_i \eqdef f|_{I_i}$ is a $C^2$ diffeomorphism onto its image.\footnote{Notice that we do not impose that $f_i$ extends to a $C^2$ diffeomorphism on the closure of $I_i$.}

\begin{example}\label{ex:thaler}
  Suppose that $d' = 0$ and that each branch of the map is full in the sense that $f_i(I_i) = (0,1)$ and monotone increasing. Moreover, we assume that
  \begin{equation}\label{eq:uni-exp}
    | f' (x) |  > 1,\quad\text{ for all } x \not\in \{ \xi_1, \ldots, \xi_d \}
    ,
  \end{equation}
  and that Adler's distortion condition holds
  \begin{equation}\label{eq:adler}
    \sup_{1 \le i \le d} \sup_{x \in I_i} \frac{ |f'' (x)| }{ ( f'(x) )^2 }
    < \infty
    ,
  \end{equation}
  In this case we obtain a map with $d$ full orientation preserving branches, each of which contains a neutral fixed point satisfying~\eqref{eq:expansion}. In this case we define
  \begin{equation}
    Y \eqdef
    \begin{cases}
      (\gamma, f \gamma), &\text{ if } d = 2; \\
      \bigcup_{ i = 1 }^{ d } I_i \setminus f_{i}^{-1} I_i, &
      \text{ if } d \ge 3,
    \end{cases}
  \end{equation}
  where in the case $d = 2$, $\{ \gamma, f \gamma \}$ is the unique orbit of period 2. The fact that this map satisfies Assumptions~\ref{asm:m}--\ref{asm:dyn-sep} for this choice of $Y$ can be found in~\cite[Section 4.1]{CoaMelTal:2024}.
\end{example}

\begin{example}
  We can generalise the previous example by allowing an additional $1 \le d' \le +\infty$ uniformly expanding branches.   As in Example~\ref{ex:thaler}, we assume that both~\eqref{eq:uni-exp} and~\eqref{eq:adler} hold, and we set
  \[
    Y \eqdef
    \bigcup_{ i = 1 }^{ d } I_i \setminus f_{i}^{-1} I_i
    ,
  \]
  regardless of the value of $d$. 
  The fact that such maps satisfy Assumptions~\ref{asm:m}--\ref{asm:dyn-sep} can be found in~\cite[Example 5.14]{CoaMel2025-sub}.
\end{example}

\begin{example}
  \label{ex:d+D}
  Another way which can generalise Example~\ref{ex:thaler} is by allowing for critical and/or singular points at the preimages of the neutral fixed points. For simplicity, we will stick to the case that $d = 2$, which were introduced in~\cite{CoaLuzMuh2023}.
  The fact that these maps satisfy Assumptions~\ref{asm:m}--\ref{asm:dyn-sep} can be found in~\cite[Setion 4.2]{CoaMelTal:2024}.
\end{example}

\bibliographystyle{alpha}
\bibliography{bib2}

\end{document}